\newtheorem{theorem}{Theorem}
\newtheorem{definition}[theorem]{Definition}
\newtheorem{lemma}[theorem]{Lemma}
\newenvironment{proof}[1][Proof]{\noindent\textbf{#1.} }{\ \rule{0.5em}{0.5em}}
\begin{document}

\title{Approximate continuous-discrete filters for the estimation of
diffusion processes from partial and noisy observations}
\author{J.C. Jimenez \\
{\small Instituto de Cibern\'{e}tica, Matem\'{a}tica y F\'{\i}sica}\\
{\small Departamento de Matem\'{a}tica Interdisciplinaria}\\
{\small Calle 15, no. 551, Vedado, La Habana, Cuba}\\
{\small e-mail: jcarlos@icimaf.cu}} \maketitle

\begin{abstract}
In this paper, an alternative approximation to the innovation method
is introduced for the parameter estimation of diffusion processes
from partial and noisy observations. This is based on a convergent
approximation to the first two conditional moments of the innovation
process through approximate continuous-discrete filters of minimum
variance. It is shown that, for finite samples, the resulting
approximate estimators converge to the exact one when the error of
the approximate filters decreases. For an increasing number of
observations, the estimators are asymptotically normal distributed
and their bias decreases when the above mentioned error does it. A
simulation study is provided to illustrate the performance of the
new estimators. The results show that, with respect to the
conventional approximate estimators, the new ones significantly
enhance the parameter estimation of the test equations. The proposed
estimators are intended for the recurrent practical situation where
a nonlinear stochastic system should be identified from a reduced
number of partial and noisy observations distant in time.
\end{abstract}

\section{Introduction}

The statistical inference for diffusion processes described by
Stochastic Differential Equations (SDEs) is currently a subject of
intensive researches. A basic difficulty of this statistical problem
is that, except for a few simple examples, the joint distribution of
the discrete-time observations of the process has unknown
closed-form. In addition, if only some components of the diffusion
process contaminated with noise are observed, then an extra
complication arises. Typically, in this situation, the statistical
problem under consideration is reformulated in the framework of
continuous-discrete state space models, where the SDE to be
estimated defines the continuous state equation and the given
observations are described in terms of an discrete observation
equation. For such class of models, a number of estimators based on
analytical and simulated approximations have been developed in the
last four decades. See, for instance, Nielsen et. al (2000a) and
Jimenez et al. (2006) for a review.

In particular, the present paper deals with the class of innovation
estimators for the parameter estimation of SDEs given a time series
of partial and noisy observations. These are the estimators obtained
by maximizing a normal log-likelihood function of the discrete-time
innovations associated with the underlying continuous-discrete state
space model. Approximations to this class of estimators have been
derived by approximating the discrete-time innovations by means of
inexact filters. With this purpose, approximate continuous-discrete
filters like the Local Linearization (Ozaki 1994, Shoji 1998,
Jimenez \& Ozaki 2006), the extended Kalman (Nielsen \& Madsen 2001,
Singer 2002), and the second order (Nielsen et al. 2000b, Singer
2002) filters have been used, as well as, discrete-discrete filters
after the discretization of the SDE by means of a numerical scheme
(Ozaki \& Iino 2001, and Peng et al. 2002). The approximate
innovation estimators obtained in this way have been useful for the
identification, from actual data, of a variety of
neurophysiological, financial and molecular models among others
(see, e.g., Calderon 2009, Chiarella et al. 2009, Jimenez et al.
2006, Riera et al. 2004, Valdes et al. 1999). However, a common
feature of the approximate innovation estimators mentioned above is
that, once the observations are given, the error between the
approximate and the exact innovations is fixed and completely
determined by the distance between observations. Clearly, this fixes
the bias of the approximate estimators for finite samples and
obstructs its asymptotic correction when the number of observations
increases.

In this paper, an alternative approximation to the innovation
estimator for diffusion processes is introduced, which is oriented
to reduce and control the estimation bias. This is based on a
recursive approximation of the first two conditional moments of the
innovation process through approximate filters that converge to the
linear one of minimum variance. It is shown that, for finite
samples, the resulting approximate estimators converge to the exact
one when the error of the approximate filters decreases. For an
increasing number of observations, they are asymptotically normal
distributed and their bias decreases when the above mentioned error
does it. As a particular instance, the approximate innovation
estimators designed with the order-$\beta$ Local Linearization
filters are presented. Their convergence, practical algorithms and
performance in simulations are also considered in detail. The
simulations show that, with respect to the conventional
approximations to the innovation estimators, the new approximate
estimators significantly enhance the parameter estimation of the
test equations given a reduced number of partial and noisy
observations distant in time, which is a typical situation in many
practical inference problems.

The paper is organized as follows. In section 2, basic notations and
definitions are presented. In section 3, the new approximate
estimators are defined and some of their properties studied. As a
particular instance, the order-$\beta$ innovation estimator based on
convergent Local Linearization filters is presented in Section 4, as
well as algorithms for its practical implementation. In the last
section, the performance of the new estimators is illustrated with
various examples.

\section{Notation and preliminary\label{Inn Method Section}}

Let $(\Omega ,\mathcal{F},P)$ be the underlying complete probability
space and $\{\mathcal{F}_{t},$ $t\geq t_{0}\}$ be an increasing
right continuous family of complete sub $\sigma $-algebras of
$\mathcal{F}$, and $\mathbf{x}$ be a $d$-dimensional diffusion
process defined by the stochastic differential equation
\begin{equation}
d\mathbf{x}(t)=\mathbf{f}(t,\mathbf{x}(t);\mathbf{\theta }%
)dt+\sum\limits_{i=1}^{m}\mathbf{g}_{i}(t,\mathbf{x}(t);\mathbf{\theta })d%
\mathbf{w}^{i}(t)  \label{SS1}
\end{equation}%
for $t\geq t_{0}\in
\mathbb{R}
$, where $\mathbf{f}$ and $\mathbf{g}_{i}$ are differentiable functions, $%
\mathbf{w=(\mathbf{w}}^{1},..,\mathbf{w}^{m}\mathbf{)}$ is an $m$%
-dimensional $\mathcal{F}_{t}$-adapted standard Wiener process, $\mathbf{%
\theta }\in \mathcal{D}_{\theta }$ is a vector of parameters, and $\mathcal{D%
}_{\theta }\subset
\mathbb{R}
^{p}$ is a compact set. Linear growth, uniform Lipschitz and
smoothness conditions on the functions $\mathbf{f}$ and
$\mathbf{g}_{i}$ that ensure the existence and uniqueness of a
strong solution of (\ref{SS1}) with bounded moments are assumed for
all $\mathbf{\theta }\in \mathcal{D}_{\theta }$.

Let us consider the state space model defined by the continuous
state equation (\ref{SS1}) and the discrete observation equation
\begin{equation}
\mathbf{z}(t_{k})=\mathbf{Cx}(t_{k})+\mathbf{e}_{t_{k}},\text{ for }%
k=0,1,..,M-1,  \label{SS2}
\end{equation}%
where $\{\mathbf{e}_{t_{k}}:\mathbf{e}_{t_{k}}\thicksim \mathcal{N}(0,%
\mathbf{\Pi }_{t_{k}}),$ $k=0,..,M-1\}$ is a sequence of
$r$-dimensional
i.i.d. Gaussian random vectors independent of $\mathbf{w}$, $\mathbf{\Pi }%
_{t_{k}}$ an $r\times r$ positive semi-definite matrix, and $\mathbf{C}$ an $%
r\times d$ matrix. Here, it is assumed that the $M$ time instants
$t_{k}$
define an increasing sequence $\{t\}_{M}=\{t_{k}:t_{k}<t_{k+1}$, $%
k=0,1,..,M-1\}$.

Suppose that,  through (\ref{SS2}), $M$ partial and noisy
observations of the diffusion process $\mathbf{x}$ defined by
(\ref{SS1}) with $\mathbf{\theta =\theta }_{0}\in
\mathcal{D}_{\theta }$
are given on $\{t\}_{M}$. In particular, denote by $Z=\{%
\mathbf{z}_{t_{0}},..,\mathbf{z}_{t_{M-1}}\}$ the sequence of these
observations, where $\mathbf{z}_{t_{k}}$ denotes the observation at
$t_{k}$ for all $t_{k}\in \{t\}_{M}$.

The inference problem to be consider here is the estimation of the
parameter $\mathbf{\theta }_{0}$ of the SDE (\ref{SS1}) given the
time series $Z$. Specifically, let us consider the innovation
estimator defined as follows.

\begin{definition}
(Ozaki, 1994) Given $M$ observations $Z$ of the continuous-discrete state space model (\ref%
{SS1})-(\ref{SS2}) with $\mathbf{\theta =\theta }_{0}\in
\mathcal{D}_{\theta }$ on $\{t\}_{M}$,
\begin{equation}
\widehat{\mathbf{\theta } }_{M}=\arg \{\underset{\mathbf{\theta } }{\mathbf{\min }}U_{M}(%
\mathbf{\theta },Z)\}  \label{Innovation estimator}
\end{equation}%
defines the innovation estimator of $\mathbf{\theta }_{0}$, where
\begin{equation*}
U_{M}(\mathbf{\theta },Z)=(M-1)\ln (2\pi )+\sum\limits_{k=1}^{M-1}\ln (\det (%
\mathbf{\Sigma }_{t_{k}}))+\mathbf{\nu }_{t_{k}}^{\top }(\mathbf{\Sigma }%
_{t_{k}})^{-1}\mathbf{\nu }_{t_{k}},
\end{equation*}%
$\mathbf{\nu }_{t_{k}}$ is the discrete innovations of the model (\ref{SS1}%
)-(\ref{SS2}) and $\mathbf{\Sigma }_{t_{k}}$ the innovation variance
for all $t_{k}\in \{t\}_{M}$.
\end{definition}

In the above definition,
\begin{equation*}
\mathbf{\nu
}_{t_{k}}=\mathbf{z}_{t_{k}}-\mathbf{Cx}_{t_{k}/t_{k-1}}\text{ \
\ \ \ \ and \ \ \ \ \ }\mathbf{\Sigma }_{t_{k}}=\mathbf{CU}_{t_{k}/t_{k-1}}%
\mathbf{C}^{\intercal }+\mathbf{\Pi }_{t_{k}},
\end{equation*}%
where $\mathbf{x}_{t_{k}/t_{k-1}}=E(\mathbf{x}(t_{k})|Z_{t_{k-1}})$ and $%
\mathbf{U}_{t_{k}/t_{k-1}}=E(\mathbf{x}(t_{k})\mathbf{x}^{\intercal
}(t_{k})|Z_{t_{k-1}})-\mathbf{x}_{t_{k}/t_{k-1}}\mathbf{x}%
_{t_{k}/t_{k-1}}^{\intercal }$ denote the conditional mean and
variance of
the diffusion process $\mathbf{x}$ at\ $t_{k}$ given the observations $%
Z_{t_{k-1}}=\{\mathbf{z}_{t_{0}},..,\mathbf{z}_{t_{k-1}}\}$ for all $%
t_{k-1},t_{k}\in \{t\}_{M}$ and $\mathbf{\theta }\in
\mathcal{D}_{\theta }$.
Here, the predictions $E(\mathbf{x}(t_{k})|Z_{t_{k-1}})$ and $E(\mathbf{x}%
(t_{k})\mathbf{x}^{\intercal }(t_{k})|Z_{t_{k-1}})$ are recursively
computed
through the Linear Minimum Variance filter for the model (\ref{SS1})-(\ref%
{SS2}). Because the first two conditional moments of $\mathbf{x}$
are correctly specified, Theorem 1 in Ljung \& Caines (1979) for
prediction error estimators implies the consistent and the
asymptotic normality of the innovation estimator (\ref{Innovation
estimator}) under conventional regularity conditions (Ozaki 1994,
Nolsoe et al. 2000).

In general, since the conditional mean and variance of equation
(\ref{SS1})
have not explicit formulas, approximations to them are needed. If $%
\widetilde{\mathbf{x}}_{t_{k}/t_{k-1}}$ and $\widetilde{\mathbf{U}}%
_{t_{k}/t_{k-1}}$ are approximations to $\mathbf{x}_{t_{k}/t_{k-1}}$ and $%
\mathbf{U}_{t_{k}/t_{k-1}}$, then the estimator
\begin{equation*}
\widehat{\mathbf{\vartheta }}_{M}=\arg \{\underset{\mathbf{\theta }}{\mathbf{%
\min }}\widetilde{U}_{M}\mathbf{(\theta },Z)\},
\end{equation*}%
with%
\begin{equation*}
\widetilde{U}_{M}(\mathbf{\theta },Z)=(M-1)\ln (2\pi
)+\sum\limits_{k=1}^{M-1}\ln (\det (\widetilde{\mathbf{\Sigma }}_{t_{k}}))+%
\widetilde{\mathbf{\nu }}_{t_{k}}^{\top }(\widetilde{\mathbf{\Sigma }}%
_{t_{k}})_{t_{k}}^{-1}\widetilde{\mathbf{\nu }}_{t_{k}}
\end{equation*}%
provides an approximation to the innovation estimator
(\ref{Innovation estimator}), where
\begin{equation*}
\widetilde{\mathbf{\nu }}_{t_{k}}=\mathbf{z}_{t_{k}}-\mathbf{C}\widetilde{%
\mathbf{x}}_{t_{k}/t_{k-1}}\text{ \ \ \ \ \ and \ \ \ \ \ }\widetilde{%
\mathbf{\Sigma }}_{t_{k}}=\mathbf{C}\widetilde{\mathbf{U}}_{t_{k}/t_{k-1}}%
\mathbf{C}^{\intercal }+\mathbf{\Pi }_{t_{k}}
\end{equation*}%
are approximations to $\mathbf{\nu }_{t_{k}}$ and $\mathbf{\Sigma
}_{t_{k}}$.

Approximate estimators of this type have early been considered in a
number of papers. Approximate continuous-discrete filters like Local
Linearization filters (Ozaki 1994, Shoji 1998, Jimenez \& Ozaki
2006), extended Kalman filter (Nielsen \& Madsen 2001, Singer 2002),
and second order filters (Nielsen et al. 2000b, Singer 2002) have
been used for approximating the values of $\mathbf{\nu }_{t_{k}}$
and $\mathbf{\Sigma }_{t_{k}}$. On the other hand, in Ozaki \& Iino
(2001) and Peng et. al (2002), discrete-discrete filters have also
been used after the discretization of the equation (\ref{SS1}) by
means of a numerical scheme. In all these
approximations, once the data $Z$ are given (and so the time partition $%
\{t\}_{M}$ is specified), the error between $\widetilde{\mathbf{\nu }}%
_{t_{k}}$ and $\mathbf{\nu }_{t_{k}}$ is completely settled by $%
t_{k}-t_{k-1} $ and can not be reduced. In this way, the difference
between the approximate innovation estimator
$\widehat{\mathbf{\vartheta }}_{M}$ and the exact one
$\widehat{\mathbf{\theta }}_{M}$ can not be reduced neither.
Clearly, this is a important limitation of these approximate
estimators. Nevertheless, in a number of practical situations (see
Jimenez \& Ozaki 2006, Jimenez et al. (2006), and references
therein) the bias the approximate innovation estimators is
negligible. Therefore, these estimators has been useful for the
identification, from actual data, of a variety of
neurophysiological, financial and molecular models among others as
it was mentioned above. Further, in a simulation study with the
Cox-Ingersoll-Ross model of short-term interest rate, approximate
innovation methods have provided similar or better results than
those obtained by prediction-based estimating functions but with
much lower computational cost (Nolsoe et al., 2000). Similar results
have been reported in a comparative study with the approximate
likelihood via simulation method (Singer, 2002).

Denote by $\mathcal{C}_{P}^{l}(\mathbb{R}^{d},\mathbb{R})$ the space
of $l$ time continuously differentiable functions
$g:\mathbb{R}^{d}\rightarrow \mathbb{R}$ for which $g$ and all its
partial derivatives up to order $l$ have polynomial growth.

\section{Order-$\protect\beta $ innovation estimator}

Let $\left( \tau \right) _{h>0}=\{\tau _{n}:\tau _{n+1}-\tau _{n}\leq h,$ $%
n=0,1,\ldots ,N\}$ be a time discretization of $[t_{0},t_{M-1}]$ such that $%
\left( \tau \right) _{h}\supset \{t\}_{M}$, and $\mathbf{y}_{n}$ be
the approximate value of $\mathbf{x}(\tau _{n})$ obtained from a
discretization of the equation (\ref{SS1}) for all $\tau _{n}\in
\left( \tau \right) _{h}$.
Let us consider the continuous time approximation $\mathbf{y}=\{\mathbf{y}%
(t),$ $t\in \lbrack t_{0},t_{M-1}]:\mathbf{y}(\tau
_{n})=\mathbf{y}_{n}$ for all $\tau _{n}\in \left( \tau \right)
_{h}\}$ of $\mathbf{x}$ with initial
conditions%
\begin{equation}
E\left( \mathbf{y}(t_{0})\text{{\LARGE \TEXTsymbol{\vert}}}\mathcal{F}%
_{t_{0}}\right) =E\left( \mathbf{x}(t_{0})\text{{\LARGE \TEXTsymbol{\vert}}}%
\mathcal{F}_{t_{0}}\right) \text{ \ \ and \ }E\left( \mathbf{y}(t_{0})%
\mathbf{y}^{\intercal }(t_{0})\text{{\LARGE \TEXTsymbol{\vert}}}\mathcal{F}%
_{t_{0}}\right) =E\left( \mathbf{x}(t_{0})\mathbf{x}^{\intercal }(t_{0})%
\text{{\LARGE \TEXTsymbol{\vert}}}\mathcal{F}_{t_{0}}\right) ;\text{
} \label{Inn LMVF5}
\end{equation}%
satisfying the bound condition
\begin{equation}
E\left( \left\vert \mathbf{y}(t)\right\vert ^{2q}\text{{\LARGE \TEXTsymbol{%
\vert}}}\mathcal{F}_{t_{0}}\right) \leq L  \label{Inn LMVF6}
\end{equation}%
for all $t\in \lbrack t_{0},t_{M-1}]$; and the weak convergence
criteria
\begin{equation}
\underset{t_{k}\leq t\leq t_{k+1}}{\sup }\left\vert E\left( g(\mathbf{x}(t))%
\text{{\LARGE \TEXTsymbol{\vert}}}\mathcal{F}_{t_{k}}\right) -E\left( g(%
\mathbf{y}(t))\text{{\LARGE
\TEXTsymbol{\vert}}}\mathcal{F}_{t_{k}}\right) \right\vert \leq
L_{k}h^{\beta }  \label{Inn LMVF7}
\end{equation}%
for all $t_{k},t_{k+1}\in \{t\}_{M}$ and $\mathbf{\theta }\in \mathcal{D}%
_{\theta }$, where $g\in \mathcal{C}_{P}^{2(\beta +1)}(\mathbb{R}^{d},%
\mathbb{R})$, $L$ and $L_{k}$ are positive constants, $\beta \in
\mathbb{N}
_{+}$, and $q=1,2...$. The process $\mathbf{y}$ defined in this way is typically called order-$%
\beta $ approximation to $\mathbf{x}$ in weak sense (Kloeden \&
Platen, 1999). The second conditional moment of $\mathbf{y}$ is also
assumed to be positive definite and continuous for all $\mathbf{\theta }\in \mathcal{%
D}_{\theta }$.

In addition, let us consider the following approximation to the
Linear Minimum Variance (LMV) filter of the model
(\ref{SS1})-(\ref{SS2}).

\begin{definition}
(Jimenez 2012b) Given a time discretization $\left( \tau \right)
_{h}\supset \{t\}_{M}$, the order-$\beta $ Linear Minimum Variance
filter for the state space model (\ref{SS1})-(\ref{SS2}) is defined,
between observations, by
\begin{equation}
\mathbf{y}_{t/t}=E(\mathbf{y(}t)/Z_{t})\text{ \ \ \ \ \ \ and \ \ \ }\mathbf{%
V}_{t/t}=E(\mathbf{y(}t)\mathbf{y}^{\intercal }(t)/Z_{t})-\mathbf{y}_{t/t}%
\mathbf{y}_{t/t}^{\intercal }  \label{LMVF12 App}
\end{equation}%
$\ $for all $t\in (t_{k},t_{k+1})$, and by
\begin{equation}
\mathbf{y}_{t_{k+1}/t_{k+1}}=\mathbf{y}_{t_{k+1}/t_{k}}+\mathbf{K}_{t_{k+1}}%
\mathbf{(\mathbf{z}}_{t_{k+1}}-\mathbf{\mathbf{C}y}_{t_{k+1}/t_{k}}\mathbf{)}%
,  \label{LMVF3 App}
\end{equation}%
\begin{equation}
\mathbf{V}_{t_{k+1}/t_{k+1}}=\mathbf{V}_{t_{k+1}/t_{k}}-\mathbf{K}_{t_{k+1}}%
\mathbf{CV}_{t_{k+1}/t_{k}},  \label{LMVF4 App}
\end{equation}%
for each observation at $t_{k+1}$, with filter gain%
\begin{equation}
\mathbf{K}_{t_{k+1}}=\mathbf{V}_{t_{k+1}/t_{k}}\mathbf{C}^{\intercal }%
{\Large (}\mathbf{CV}_{t_{k+1}/t_{k}}\mathbf{C}^{\intercal
}+\mathbf{\Pi }_{t_{k+1}})^{-1}  \label{LMVF5 App}
\end{equation}%
for all $t_{k},t_{k+1}\in \{t\}_{M}$, where $\mathbf{y}$ is an
order-$\beta $
approximation to the solution of (\ref{SS1}) in weak sense, and $Z_{t}=\{%
\mathbf{z(}t_{k}):t_{k}\leq t,$ $t_{k}\in \{t\}_{M}\}$ are given
observations of (\ref{SS1})-(\ref{SS2}) until the time instant $t$.
The
predictions $\mathbf{y}_{t/t_{k}}=E(\mathbf{y(}t)/Z_{t_{k}})$ and $\mathbf{V}%
_{t/t_{k}}=E(\mathbf{y(}t)\mathbf{y}^{\intercal }(t)/Z_{t_{k}})-\mathbf{y}%
_{t/t_{k}}\mathbf{y}_{t/t_{k}}^{\intercal }$, with initial conditions $%
\mathbf{y}_{t_{k}/t_{k}}$ and $\mathbf{V}_{t_{k}/t_{k}}$, are
defined for all $t\in (t_{k},t_{k+1}]$ and $t_{k},t_{k+1}\in
\{t\}_{M}$.
\end{definition}

Once an order-$\beta $ approximation to the solution of equation
(\ref{SS1}) is chosen, and so an order-$\beta $ LMV filter is
specified, the following approximate innovation estimator can
naturally be defined.

\begin{definition} \label{Definition order-B Inn
estimator} Given $M$ observations $Z$ of the continuous-discrete
state space model (\ref {SS1})-(\ref{SS2}) with $\mathbf{\theta
=\theta }_{0}\in \mathcal{D}_{\theta }$ on $\{t\}_{M}$, the
order-$\beta $ innovation estimator for the parameters of
(\ref{SS1}) is defined by
\begin{equation}
\widehat{\mathbf{\theta }}_{M}(h)=\arg \{\underset{\mathbf{\theta
}}{\mathbf{ \min }}U_{M,h}\mathbf{(\theta },Z)\},  \label{order-B
innovation estimator}
\end{equation}%
where
\[
U_{M,h}(\mathbf{\theta },Z)=(M-1)\ln (2\pi
)+\sum\limits_{k=1}^{M-1}\ln (\det ( \mathbf{\Sigma
}_{h,t_{k}}))+\mathbf{\nu }_{h,t_{k}}^{\intercal }(\mathbf{ \Sigma
}_{h,t_{k}})^{-1}\mathbf{\nu }_{h,t_{k}},
\]
with $\mathbf{\nu
}_{h,t_{k}}=\mathbf{z}_{t_{k}}-\mathbf{Cy}_{t_{k}/t_{k-1}}$ and
$\mathbf{\Sigma }_{h,t_{k}}=\mathbf{CV}_{t_{k}/t_{k-1}}\mathbf{C}
^{\intercal }+\mathbf{\Pi }_{t_{k}}$, being
$\mathbf{y}_{t_{k}/t_{k-1}}$ and $\mathbf{V}_{t_{k}/t_{k-1}}$ the
prediction mean and variance of an order-$\beta $ LMV filter for the
model (\ref{SS1})-(\ref{SS2}), and $h$ the maximum stepsize of the
time discretization $\left( \tau \right) _{h}\supset \{t\}_{M}$
associated to the filter.
\end{definition}

In principle, according to the above definitions, any kind of
approximation $ \mathbf{y}$ converging to $\mathbf{x}$ in a weak
sense can be used to construct an approximate order-$\beta $ LMV
filter and so an approximate order-$\beta $ innovation estimator. In
this way, the Euler-Maruyama, the Local Linearization and any high
order numerical scheme for SDEs as those considered in Kloeden \&
Platen (1999) might be used as well. However, the approximations
$\mathbf{\nu }_{h,t_{k}}$ and $\mathbf{\Sigma }_{h,t_{k}}$ to
$\mathbf{\nu }_{t_{k}}$ and $\mathbf{\Sigma }_{t_{k}}$ in
(\ref{Innovation estimator}) at each $t_{k}$ will be now derived
from the predictions of approximate LMV filter after various
iterations with stepsizes lower than $ t_{k}-t_{k-1}$. Note that,
when $\left( \tau \right) _{h}\equiv \{t\}_{M}$, an order-$\beta $
LMV filter might reduce to some one of the conventional
approximation to the exact LMV filter. In this situation, the
corresponding order-$\beta $ innovation estimator reduces to some
one of the approximate innovation estimator mentioned in Section
\ref{Inn Method Section}. In particular, to those considered in
Ozaki (1994), Shoji (1998), Jimenez \& Ozaki (2006) when Local
Linearization schemes are used to define order-$ \beta $ LMV
filters.

Note that the goodness of the approximation $\mathbf{y}$ to
$\mathbf{x}$ is measured (in weak sense) by the left hand side of
(\ref{Inn LMVF7}). Thus, the inequality (\ref{Inn LMVF7}) gives a
bound for the errors of the approximation $\mathbf{y }$ to
$\mathbf{x}$, for all $t\in \lbrack t_{k},t_{k+1}]$ and all pair of
consecutive observations $t_{k},t_{k+1}\in \{t\}_{M}$. Moreover,
this inequality states the convergence (in weak sense and with rate
$\beta $) of the approximation $\mathbf{y}$ to $\mathbf{x}$ as the
maximum stepsize $h$ of the time discretization $(\tau )_{h}\supset
\{t\}_{M}$ goes to zero. Clearly this includes, as particular case,
the convergence of the first two conditional moments of $\mathbf{y}$
to those of $\mathbf{x}$, which implies the convergence of
order-$\beta$ LMV filter (\ref{LMVF12 App})-(\ref{LMVF5 App}) to the
exact LMV filter stated by Theorem 5 in Jimenez (2012b). Since the
approximate innovation estimator (\ref{order-B innovation
estimator}) is designed in terms of the order-$\beta$ LMV filter
(\ref{LMVF12 App})-(\ref{LMVF5 App}), the weak convergence of
$\mathbf{y}$ to $\mathbf{x}$ should then imply the convergence of
the approximate innovation estimator (\ref{order-B innovation
estimator}) to the exact one (\ref{Innovation estimator}) and the
similarity of their asymptotic properties, as $h$ goes to zero. Next
results deal with these matters.

\subsection{Convergence}

For a finite sample $Z$ of $M$ observation of the state space model (\ref%
{SS1})-(\ref{SS2}), Theorem 5 in Jimenez (2012b) states the
convergence of the order-$\beta $ LMV filters to the exact LMV one
when $h$ decreases. Therefore, the convergence of the order-$\beta $
innovation estimator to the exact innovation estimator is
predictable when $h$ goes to zero.

\begin{theorem}
\label{Innovation convergence theorem}Let $Z$ be a time series of
$M$ observations of the state space model (\ref{SS1})-(\ref{SS2})
with $\mathbf{ \theta =\theta }_{0}$ on the time partition
$\{t\}_{M}$. Let $\widehat{ \mathbf{\theta }}_{M}$ and
$\widehat{\mathbf{\theta }}_{M}(h)$ be, respectively, the innovation
and an order-$\beta $ innovation estimator for the parameters of
(\ref{SS1}) given $Z$. Then
\begin{equation*}
\left\vert \widehat{\mathbf{\theta }}_{M}(h)-\widehat{\mathbf{\theta
}} _{M}\right\vert \rightarrow 0
\end{equation*}%
as $h\rightarrow 0$. Moreover,
\[
E(\left\vert \widehat{\mathbf{\theta
}}_{M}(h)-\widehat{\mathbf{\theta }}_{M}\right\vert )\rightarrow 0
\]%
as $h\rightarrow 0$, where the expectation is with respect to the
measure on the underlying probability space generating the
realizations of the model (\ref{SS1})-(\ref{SS2}) with $\mathbf{
\theta =\theta }_{0}$.
\end{theorem}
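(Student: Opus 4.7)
The strategy is to propagate the filter-level convergence of Theorem~5 in Jimenez (2012b) up to a uniform (in $\mathbf{\theta}$) convergence of the objective $U_{M,h}(\cdot,Z)$ toward $U_M(\cdot,Z)$ on the compact parameter set $\mathcal{D}_\theta$, then invoke a standard argmin-stability argument, and finally dominated convergence to obtain the expectation statement.

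\emph{Step 1 (from filter predictions to the objective).} Fix $\mathbf{\theta}\in\mathcal{D}_\theta$ and a realization of $Z$. Theorem~5 in Jimenez (2012b) gives, at each observation time $t_k\in\{t\}_M$, the convergence of the order-$\beta$ one-step predictions $(\mathbf{y}_{t_k/t_{k-1}},\mathbf{V}_{t_k/t_{k-1}})$ to their exact LMV counterparts $(\mathbf{x}_{t_k/t_{k-1}},\mathbf{U}_{t_k/t_{k-1}})$ as $h\to 0$. Hence $\mathbf{\nu}_{h,t_k}\to\mathbf{\nu}_{t_k}$ and $\mathbf{\Sigma}_{h,t_k}\to\mathbf{\Sigma}_{t_k}$. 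Since $\mathbf{\Sigma}_{t_k}$ is positive definite, the maps $A\mapsto \ln\det A$ and $A\mapsto A^{-1}$ are continuous on a neighbourhood of $\mathbf{\Sigma}_{t_k}$; summing the resulting convergences over $k$ gives $U_{M,h}(\mathbf{\theta},Z)\to U_M(\mathbf{\theta},Z)$ pointwise in $\mathbf{\theta}$.

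\emph{Step 2 (uniformity in $\mathbf{\theta}$).} The smoothness assumed on $\mathbf{f},\mathbf{g}_i$ together with the compactness of $\mathcal{D}_\theta$ allows the constants $L_k$ in (\ref{Inn LMVF7}) to be chosen independent of $\mathbf{\theta}\in\mathcal{D}_\theta$. Combined with the continuity of $\mathbf{\theta}\mapsto U_M(\mathbf{\theta},Z)$ and a standard equi-continuity argument, this upgrades Step~1 to
\[
\sup_{\mathbf{\theta}\in\mathcal{D}_\theta}\bigl|U_{M,h}(\mathbf{\theta},Z)-U_M(\mathbf{\theta},Z)\bigr|\longrightarrow 0\quad\text{as }h\to 0.
\]

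\emph{Step 3 (argmin stability).} Let $h_n\downarrow 0$ and set $\mathbf{\theta}_n:=\widehat{\mathbf{\theta}}_M(h_n)\in\mathcal{D}_\theta$. By compactness, some subsequence satisfies $\mathbf{\theta}_{n_j}\to\mathbf{\theta}^{\ast}\in\mathcal{D}_\theta$. Uniform convergence and continuity then yield
\[
U_M(\mathbf{\theta}^{\ast},Z)=\lim_j U_{M,h_{n_j}}(\mathbf{\theta}_{n_j},Z)\leq \lim_j U_{M,h_{n_j}}(\widehat{\mathbf{\theta}}_M,Z)=U_M(\widehat{\mathbf{\theta}}_M,Z),
\]
so $\mathbf{\theta}^{\ast}$ minimizes $U_M(\cdot,Z)$; under the identifiability convention implicit in the definition of $\widehat{\mathbf{\theta}}_M$, this forces $\mathbf{\theta}^{\ast}=\widehat{\mathbf{\theta}}_M$. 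As every subsequence of $(\mathbf{\theta}_n)$ admits a further subsequence with the same limit, $\widehat{\mathbf{\theta}}_M(h)\to \widehat{\mathbf{\theta}}_M$ as $h\to 0$.

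\emph{Step 4 (expected convergence).} The difference $|\widehat{\mathbf{\theta}}_M(h)-\widehat{\mathbf{\theta}}_M|$ is bounded above by the diameter of the compact set $\mathcal{D}_\theta$, independently of $h$ and of the realization of $Z$. Dominated convergence applied to the pathwise limit produced in Step~3 then yields $E(|\widehat{\mathbf{\theta}}_M(h)-\widehat{\mathbf{\theta}}_M|)\to 0$.

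\emph{Main obstacle.} The delicate piece is Step~2: extracting a $\mathbf{\theta}$-uniform version of the filter error bounds, and hence of the convergence of $U_{M,h}(\cdot,Z)$, on the compact domain $\mathcal{D}_\theta$. Once uniform convergence is in hand, Step~3 (argmin stability on a compact set) and Step~4 (dominated convergence with a constant majorant) are essentially routine.
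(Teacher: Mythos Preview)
Your proposal is correct and follows the same skeleton as the paper's proof: invoke Theorem~5 of Jimenez (2012b) to transport the order-$\beta$ filter errors into convergence of $U_{M,h}(\cdot,Z)$ toward $U_M(\cdot,Z)$, then conclude convergence of the minimizers. The execution differs in two respects worth recording. First, where you appeal to continuity of $A\mapsto\ln\det A$ and $A\mapsto A^{-1}$ near the positive-definite limit $\mathbf{\Sigma}_{t_k}$, the paper instead writes an explicit remainder $U_{M,h}(\mathbf{\theta},Z)=U_M(\mathbf{\theta},Z)+R_{M,h}(\mathbf{\theta})$ via the identities $\det(\mathbf{\Sigma}_{h,t_k})=\det(\mathbf{\Sigma}_{t_k})\det(\mathbf{I}-\mathbf{\Sigma}_{t_k}^{-1}\Delta\mathbf{\Sigma}_{h,t_k})$ and a resolvent expansion of $\mathbf{\Sigma}_{h,t_k}^{-1}$, then drives each term of $R_{M,h}$ to zero; your route is shorter and avoids this algebra. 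Second, for the expectation assertion the paper observes that the constant $K$ in the filter bound $|\mathbf{x}_{t_k/t_{k-1}}-\mathbf{y}_{t_k/t_{k-1}}|\le Kh^{\beta}$ is realization-independent and pushes the inequality directly under $E(\cdot)$, whereas your dominated-convergence argument with $\mathrm{diam}(\mathcal{D}_\theta)$ as majorant is more elementary and needs no such uniformity. Your Steps~2--3 (uniform-in-$\mathbf{\theta}$ convergence plus the compactness/subsequence argmin argument) spell out what the paper compresses into a single line; on that point your version is the more careful of the two.
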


\begin{proof}
Defining $\Delta \mathbf{\Sigma }_{h,t_{k}}=\mathbf{\Sigma }_{t_{k}}-%
\mathbf{\Sigma }_{h,t_{k}}$, it follows that
\begin{eqnarray}
\det (\mathbf{\Sigma }_{h,t_{k}}) &=&\det (\mathbf{\Sigma
}_{t_{k}}-\Delta
\mathbf{\Sigma }_{h,t_{k}})  \notag \\
&=&\det (\mathbf{\Sigma }_{t_{k}})\det (\mathbf{I}-\mathbf{\Sigma }%
_{t_{k}}^{-1}\Delta \mathbf{\Sigma }_{h,t_{k}})  \label{PLK Ind 1}
\end{eqnarray}%
and%
\begin{eqnarray}
\mathbf{\Sigma }_{h,t_{k}}^{-1} &=&(\mathbf{\Sigma }_{t_{k}}-\Delta \mathbf{%
\Sigma }_{h,t_{k}})^{-1}  \notag \\
&=&\mathbf{\Sigma }_{t_{k}}^{-1}+\mathbf{\Sigma }_{t_{k}}^{-1}\Delta \mathbf{%
\Sigma }_{h,t_{k}}(\mathbf{I}-\mathbf{\Sigma }_{t_{k}}^{-1}\Delta \mathbf{%
\Sigma }_{h,t_{k}})^{-1}\mathbf{\Sigma }_{t_{k}}^{-1}.  \label{PLK
Ind 2}
\end{eqnarray}%
By using these two identities and the identity
\begin{eqnarray}
(\mathbf{z}_{t_{k}}-\mathbf{\mu }_{h,t_{k}})^{\intercal }(\mathbf{\Sigma }%
_{h,t_{k}})^{-1}(\mathbf{z}_{t_{k}}-\mathbf{\mu }_{h,t_{k}}) &=&(\mathbf{z}%
_{t_{k}}-\mathbf{\mu }_{t_{k}})^{\intercal }(\mathbf{\Sigma }%
_{h,t_{k}})^{-1}(\mathbf{z}_{t_{k}}-\mathbf{\mu }_{t_{k}})  \notag \\
&&+(\mathbf{z}_{t_{k}}-\mathbf{\mu }_{t_{k}})^{\intercal }(\mathbf{\Sigma }%
_{h,t_{k}})^{-1}(\mathbf{\mu }_{t_{k}}-\mathbf{\mu }_{h,t_{k}})  \notag \\
&&+(\mathbf{\mu }_{t_{k}}-\mathbf{\mu }_{h,t_{k}})^{\intercal }(\mathbf{%
\Sigma }_{h,t_{k}})^{-1}(\mathbf{z}_{t_{k}}-\mathbf{\mu }_{t_{k}})  \notag \\
&&+(\mathbf{\mu }_{t_{k}}-\mathbf{\mu }_{h,t_{k}})^{\intercal }(\mathbf{%
\Sigma }_{h,t_{k}})^{-1}(\mathbf{\mu }_{t_{k}}-\mathbf{\mu
}_{h,t_{k}}), \label{PLK Ind 3}
\end{eqnarray}%
with $\mathbf{\mu }_{t_{k}}=\mathbf{Cx}_{t_{k}/t_{k-1}}$ and $\mathbf{\mu }%
_{h,t_{k}}=\mathbf{Cy}_{t_{k}/t_{k-1}}$, it is obtained that
\begin{equation}
U_{M,h}(\mathbf{\theta },Z)=U_{M}(\mathbf{\theta
},Z)+R_{M,h}(\mathbf{\theta }),  \label{PLK Ind 4}
\end{equation}%
where $U_{M}$ and $U_{M,h}$ are defined in (\ref{Innovation estimator}) and (%
\ref{order-B innovation estimator}), respectively, and
\begin{eqnarray*}
R_{M,h}(\mathbf{\theta }) &=&\sum\limits_{k=1}^{M-1}\ln (\det (\mathbf{I}-%
\mathbf{\Sigma }_{t_{k}}^{-1}\Delta \mathbf{\Sigma }_{h,t_{k}}))+(\mathbf{z}%
_{t_{k}}-\mathbf{\mu }_{t_{k}})^{\intercal }\mathbf{M}_{h,t_{k}}\mathbf{(z}%
_{t_{k}}-\mathbf{\mu }_{t_{k}}) \\
&&+(\mathbf{z}_{t_{k}}-\mathbf{\mu }_{t_{k}})^{\intercal }(\mathbf{\Sigma }%
_{h,t_{k}})^{-1}(\mathbf{\mu }_{t_{k}}-\mathbf{\mu
}_{h,t_{k}})+(\mathbf{\mu
}_{t_{k}}-\mathbf{\mu }_{h,t_{k}})^{\intercal }(\mathbf{\Sigma }%
_{h,t_{k}})^{-1}(\mathbf{z}_{t_{k}}-\mathbf{\mu }_{t_{k}}) \\
&&+(\mathbf{\mu }_{t_{k}}-\mathbf{\mu }_{h,t_{k}})^{\intercal }(\mathbf{%
\Sigma }_{h,t_{k}})^{-1}(\mathbf{\mu }_{t_{k}}-\mathbf{\mu
}_{h,t_{k}})
\end{eqnarray*}%
with $\mathbf{M}_{h,t_{k}}=\mathbf{\Sigma }_{t_{k}}^{-1}\Delta \mathbf{%
\Sigma }_{h,t_{k}}(\mathbf{I-\Sigma }_{t_{k}}^{-1}\Delta \mathbf{\Sigma }%
_{h,t_{k}})^{-1}\mathbf{\Sigma }_{t_{k}}^{-1}$.

Theorem 5 in Jimenez (2012b) deals with the convergence of the order-$%
\beta $ filters to the exact LMV one. In particular, for the
predictions, it states that
\begin{equation}
\left\vert
\mathbf{x}_{t_{k}/t_{k-1}}-\mathbf{y}_{t_{k}/t_{k-1}}\right\vert
\leq Kh^{\beta }\text{ \ \ \ \ and \ \ }\left\vert \mathbf{U}%
_{t_{k}/t_{k-1}}-\mathbf{V}_{t_{k}/t_{k-1}}\right\vert \leq
Kh^{\beta } \label{Inn LMVF10}
\end{equation}%
for all $t_{k},t_{k+1}\in \{t\}_{M}$, where $K$ is a positive
constant.
Here, we recall that $\mathbf{x}_{t_{k}/t_{k-1}}$ and $\mathbf{U}%
_{t_{k}/t_{k-1}}$ are the predictions of the exact LMV filter for the model (%
\ref{SS1})-(\ref{SS2}), whereas $\mathbf{y}_{t_{k}/t_{k-1}}$ and $\mathbf{V}%
_{t_{k}/t_{k-1}}$ are the predictions of the order-$\beta $ filter.
From
this and taking into account that $\mathbf{\mu }_{t_{k}}-\mathbf{\mu }%
_{h,t_{k}}=\mathbf{C(\mathbf{x}}_{t_{k}/t_{k-1}}-\mathbf{\mathbf{y}}%
_{t_{k}/t_{k-1}}\mathbf{)}$ and $\Delta \mathbf{\Sigma }_{h,t_{k}}=$\ $%
\mathbf{C(U}_{t_{k}/t_{k-1}}-\mathbf{V}_{t_{k}/t_{k-1}})\mathbf{C}%
^{\intercal }$ follows that%
\begin{equation*}
\left\vert \mathbf{\mu }_{t_{k}}-\mathbf{\mu }_{h,t_{k}}\right\vert
\rightarrow \mathbf{0}\text{ \ \ \ \ \ \ and \ \ \ \ \ \ \
}\left\vert \mathbf{\Sigma }_{t_{k}}-\mathbf{\Sigma
}_{h,t_{k}}\right\vert \rightarrow \mathbf{0}
\end{equation*}%
as $h\rightarrow 0$ for all $\mathbf{\theta }\in \mathcal{D}_{\theta }$ and $%
k=1,..,M-1$. This and the finite bound for the first two conditional
moments
of $\mathbf{x}$ and $\mathbf{y}$ imply that $R_{M,h}(\mathbf{\theta }%
)\rightarrow \mathbf{0}$ as well with $h$. From this and (\ref{PLK
Ind 4}),
\begin{equation}
\left\vert \widehat{\mathbf{\theta }}_{M}(h)-\widehat{\mathbf{\theta }}%
_{M}\right\vert =\left\vert \underset{\mathbf{\theta }}{\arg \{\mathbf{\min }%
}\{U_{M}(\mathbf{\theta },Z)+R_{M,h}(\mathbf{\theta })\}\}-\arg \{\underset{%
\mathbf{\theta }}{\text{ }\mathbf{\min }}U_{M}(\mathbf{\theta }%
,Z)\}\right\vert \rightarrow 0  \label{Inn LMVF11}
\end{equation}%
as $h\rightarrow 0$, which implies the first assertion of the
theorem.

On the other hand, since the constant $K$ in (\ref{Inn LMVF10}) does
not depend of a specific realization of the model
(\ref{SS1})-(\ref{SS2}), from these inequalities follows that
\begin{equation*}
E(\left\vert
\mathbf{x}_{t_{k}/t_{k-1}}-\mathbf{y}_{t_{k}/t_{k-1}}\right\vert)
\leq Kh^{\beta }\text{ \ \ \ \ and \ \ }E(\left\vert \mathbf{U}
_{t_{k}/t_{k-1}}-\mathbf{V}_{t_{k}/t_{k-1}}\right\vert) \leq
Kh^{\beta },
\end{equation*}%
where the new expectation here is with respect to the measure on the
underlying probability space generating the realizations of the
model (\ref{SS1})-(\ref{SS2}) with $\mathbf{ \theta =\theta }_{0}$.
From this and (\ref{Inn LMVF11}) follows that $E(\left\vert
\widehat{\mathbf{\theta }}_{M}(h)-\widehat{ \mathbf{\theta
}}_{M}\right\vert )\rightarrow 0$ as $h\rightarrow 0$, which
concludes the proof.
\end{proof}

The first assertion of this theorem states that, for each given data
$Z$, the order-$\beta $ innovation estimator
$\widehat{\mathbf{\theta }}_{M}(h)$ converges to the exact one
$\widehat{\mathbf{\theta }}_{M}$ as $h$ goes to zero. Because $h$
controls the weak convergence criteria (\ref{Inn LMVF7}) is then
clear that the order-$\beta $ innovation estimator (\ref{order-B
innovation estimator}) converges to the exact one (\ref{Innovation
estimator}) when the error (in weak sense) of the order-$\beta $
approximation $\mathbf{y}$ to $\mathbf{x}$ decreases or,
equivalently, when the error between the order-$\beta $ filter and
the exact LMV filter decreases. On the other hand, the second
assertion implies that the average of the errors $\left\vert
\widehat{\mathbf{\theta }}_{M}(h)-\widehat{\mathbf{ \theta
}}_{M}\right\vert $ corresponding to different realizations of the
model (\ref{SS1})-(\ref{SS2}) decreases when $h$ does.

Next theorem deals with error between the averages of the estimators
$ \widehat{\mathbf{\theta }}_{M}(h)$ and $\widehat{\mathbf{\theta
}}_{M}$ computed for different realizations of the state space
model.

\begin{theorem}
\label{Inn week convergence}Let $Z$ be a time series of $M$
observations of the state space model (\ref{SS1})-(\ref{SS2}) with
$\mathbf{ \theta =\theta }_{0}$ on the time partition $\{t\}_{M}$.
Let $\widehat{ \mathbf{\theta }}_{M}$ and $\widehat{\mathbf{\theta
}}_{M}(h)$ be, respectively, the innovation and an order-$\beta $
innovation estimator for the parameters of (\ref{SS1}) given $Z$.
Then,
\[
\left\vert E(\widehat{\mathbf{\theta
}}_{M}(h))-E(\widehat{\mathbf{\theta }} _{M})\right\vert \rightarrow
0
\]%
as $h\rightarrow 0$, where the expectation is with respect to the
measure on the underlying probability space generating the
realizations of the model (\ref{SS1})-(\ref{SS2}) with $\mathbf{
\theta =\theta }_{0}$.
\end{theorem}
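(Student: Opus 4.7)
The plan is to deduce this theorem almost immediately from the second assertion of Theorem \ref{Innovation convergence theorem}, which already provides convergence in $L^{1}$ of $\widehat{\mathbf{\theta}}_{M}(h)$ to $\widehat{\mathbf{\theta}}_{M}$ with respect to the measure generating the realizations of the model (\ref{SS1})-(\ref{SS2}). The proposed convergence of expectations is a strictly weaker statement, so no new analytical work on the filters or on $U_{M,h}$ should be required; the entire content of the proof is a triangle-type inequality.

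Concretely, I would first note that since $\mathcal{D}_{\theta}$ is compact, both $\widehat{\mathbf{\theta}}_{M}$ and $\widehat{\mathbf{\theta}}_{M}(h)$ take values in $\mathcal{D}_{\theta}$ and are therefore uniformly bounded, hence integrable with respect to the underlying probability measure. In particular, $E(\widehat{\mathbf{\theta}}_{M})$ and $E(\widehat{\mathbf{\theta}}_{M}(h))$ are well defined. Then, by linearity of expectation and the elementary inequality $|E(X)|\le E(|X|)$ applied componentwise, I would write
\begin{equation*}
\left\vert E(\widehat{\mathbf{\theta}}_{M}(h))-E(\widehat{\mathbf{\theta}}_{M})\right\vert = \left\vert E\bigl(\widehat{\mathbf{\theta}}_{M}(h)-\widehat{\mathbf{\theta}}_{M}\bigr)\right\vert \le E\bigl(\left\vert \widehat{\mathbf{\theta}}_{M}(h)-\widehat{\mathbf{\theta}}_{M}\right\vert\bigr).
\end{equation*}
The conclusion then follows by invoking the second assertion of Theorem \ref{Innovation convergence theorem}, which states that the right-hand side vanishes as $h\rightarrow 0$.

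There is really no hard step here; the only point that deserves a brief justification is the integrability (so that $E(\widehat{\mathbf{\theta}}_{M}(h))$ and $E(\widehat{\mathbf{\theta}}_{M})$ are finite and the linearity used above is legitimate), and this is guaranteed by the compactness of $\mathcal{D}_{\theta}$ assumed in Section \ref{Inn Method Section}. Thus the proof will essentially consist of the displayed inequality above and a one-line appeal to the preceding theorem.
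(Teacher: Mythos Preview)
Your proposal is correct and follows essentially the same argument as the paper: the paper's proof consists precisely of the displayed inequality $|E(\widehat{\mathbf{\theta}}_{M}(h))-E(\widehat{\mathbf{\theta}}_{M})|=|E(\widehat{\mathbf{\theta}}_{M}(h)-\widehat{\mathbf{\theta}}_{M})|\le E(|\widehat{\mathbf{\theta}}_{M}(h)-\widehat{\mathbf{\theta}}_{M}|)$ followed by an appeal to the second assertion of Theorem \ref{Innovation convergence theorem}. Your additional remark on integrability via compactness of $\mathcal{D}_{\theta}$ is a small extra justification that the paper omits.
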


\begin{proof}
Trivially,
\begin{eqnarray*}
\left\vert E(\widehat{\mathbf{\theta
}}_{M}(h))-E(\widehat{\mathbf{\theta }} _{M})\right\vert
&=&\left\vert E(\widehat{\mathbf{\theta }}_{M}(h)-\widehat{
\mathbf{\theta }}_{M})\right\vert  \\
&\leq &E(\left\vert \widehat{\mathbf{\theta
}}_{M}(h)-\widehat{\mathbf{ \theta }}_{M}\right\vert ),
\end{eqnarray*}%
where the expectation here is taken with respect to the measure on
the underlying probability space generating the realizations of the
model (\ref{SS1})-(\ref{SS2}) with $\mathbf{ \theta =\theta }_{0}$.
From this and the second assertion of Theorem \ref{Innovation
convergence theorem}, the proof is completed.
\end{proof}

Here, it is worth to remak that the conventional approximate
innovation estimators mentioned in Section \ref{Inn Method Section}
do not have the desired convergence properties stated in the
theorems above for the order-$\beta $ innovation estimator. Further
note that, either in Definition \ref{Definition order-B Inn
estimator} nor in Theorems \ref{Innovation convergence theorem} and
\ref{Inn week convergence} some restriction on the time partition
$\{t\}_{M}$ for the data has been assumed. Thus, there are not
specific constraints about the time distance between two consecutive
observations, which allows the application of the order-$\beta $
innovation estimator in a variety of practical problems with a
reduced number of not close observations in time, with sequential
random measurements, or with multiple missing data. Neither there
are restrictions on the time discretization $(\tau )_{h}$ $ \supset
\{t\}_{M}$ on which the order-$\beta $ innovation estimator is
defined. Thus, $(\tau )_{h}$ can be set by the user by taking into
account some specifications or previous knowledge on the inference
problem under consideration, or automatically designed by an
adaptive strategy as it will be shown in the section concerning the
numerical simulations.

\subsection{Asymptotic properties\label{PLK Theoretical Section}}

In this section, asymptotic properties of the approximate innovation
estimator $\widehat{\mathbf{\theta }}_{M}(h)$ will be studied by
using a general result obtained in Ljung and Caines (1979) for
prediction error
estimators. According to that, the relation between the estimator $\widehat{%
\mathbf{\theta }}_{M}(h)$ and the global minimum $\mathbf{\theta
}_{M}^{\ast }$ of the function
\begin{equation}
W_{M}(\mathbf{\theta })=E(U_{M}(\mathbf{\theta },Z))\text{ with }\mathbf{%
\theta }\in \mathcal{D}_{\theta }  \label{PLKW}
\end{equation}%
should be considered, where $U_{M}$ is defined in (\ref{Innovation estimator}%
) and the expectation is taken with respect to the measure on the
underlying probability space generating the realizations of the
state space model (\ref{SS1})-(\ref{SS2}). Here, it is worth to
remark that $\mathbf{\theta }_{M}^{\ast }$ is not an estimator of
$\mathbf{\theta }$ since the function $W_{M}$ does not depend of a
given data $Z$. In fact, $\mathbf{\theta }_{M}^{\ast }$ indexes the
best predictor, in the sense that the average prediction error loss
function $W_{M}$ is minimized at this parameter (Ljung \& Caines,
1979).

In what follows, regularity conditions for the unique
identifiability of the state space model (\ref{SS1})-(\ref{SS2}) are
assumed, which are typically satisfied by stationary and ergodic
diffusion processes (see, e.g., Ljung \& Caines, 1979).

\begin{lemma}
\label{Innovation Lemma}If $\mathbf{\Sigma }_{t_{k}}$ is positive
definite for all $k=1,..,M-1$, then the function
$W_{M}(\mathbf{\theta })$ defined in (\ref{PLKW}) has an unique
minimum and
\begin{equation}
\arg \{\underset{\mathbf{\theta \in }\mathcal{D}_{\theta }}{\mathbf{\min }}%
W_{M}(\mathbf{\theta })\}=\mathbf{\theta }_{0}.  \label{M-estimator}
\end{equation}
\end{lemma}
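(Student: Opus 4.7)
The plan is to establish the identification inequality $W_M(\mathbf{\theta}) \geq W_M(\mathbf{\theta}_0)$ for every $\mathbf{\theta}\in\mathcal{D}_{\theta}$, with strict inequality whenever $\mathbf{\theta}\neq\mathbf{\theta}_0$. Two ingredients will be combined: the defining properties of the exact LMV filter at the true parameter (zero conditional mean innovations with conditional covariance $\mathbf{\Sigma}_{t_k}$), and the elementary matrix inequality $\ln\det(A)+\mathrm{tr}(A^{-1}B)\geq \ln\det(B)+r$, valid for positive definite $r\times r$ matrices $A,B$ with equality iff $A=B$ (which follows from $\mathrm{tr}(M)-\ln\det(M)\geq r$ applied to $M=A^{-1}B$, since $\lambda-\ln\lambda\geq 1$ for every $\lambda>0$).

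First I would decompose $W_M(\mathbf{\theta})$ summand-by-summand via the tower property, conditioning on $Z_{t_{k-1}}$. Since both $\mathbf{x}_{t_k/t_{k-1}}(\mathbf{\theta})$ and $\mathbf{\Sigma}_{t_k}(\mathbf{\theta})$ are $Z_{t_{k-1}}$-measurable, while the data are generated by $\mathbf{\theta}_0$, the conditional mean and covariance of $\mathbf{\nu}_{t_k}(\mathbf{\theta})=\mathbf{z}_{t_k}-\mathbf{C}\mathbf{x}_{t_k/t_{k-1}}(\mathbf{\theta})$ given $Z_{t_{k-1}}$ are $\mathbf{\eta}_{t_k}(\mathbf{\theta}):=\mathbf{C}(\mathbf{x}_{t_k/t_{k-1}}(\mathbf{\theta}_0)-\mathbf{x}_{t_k/t_{k-1}}(\mathbf{\theta}))$ and $\mathbf{\Sigma}_{t_k}(\mathbf{\theta}_0)$ respectively. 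A standard trace identity then gives
\[
E\bigl(\mathbf{\nu}_{t_k}^{\intercal}(\mathbf{\theta})(\mathbf{\Sigma}_{t_k}(\mathbf{\theta}))^{-1}\mathbf{\nu}_{t_k}(\mathbf{\theta})\bigm|Z_{t_{k-1}}\bigr) = \mathrm{tr}\bigl((\mathbf{\Sigma}_{t_k}(\mathbf{\theta}))^{-1}\mathbf{\Sigma}_{t_k}(\mathbf{\theta}_0)\bigr) + \mathbf{\eta}_{t_k}^{\intercal}(\mathbf{\theta})(\mathbf{\Sigma}_{t_k}(\mathbf{\theta}))^{-1}\mathbf{\eta}_{t_k}(\mathbf{\theta}).
\]
Applying the matrix inequality conditionally with $A=\mathbf{\Sigma}_{t_k}(\mathbf{\theta})$ and $B=\mathbf{\Sigma}_{t_k}(\mathbf{\theta}_0)$, and exploiting the non-negativity of the extra quadratic form, each conditional summand is bounded below by $\ln\det(\mathbf{\Sigma}_{t_k}(\mathbf{\theta}_0))+r$, which is exactly the value of the corresponding summand at $\mathbf{\theta}_0$ (since $\mathbf{\eta}_{t_k}(\mathbf{\theta}_0)=0$ and the conditional covariance equals $\mathbf{\Sigma}_{t_k}(\mathbf{\theta}_0)$ on the nose). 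Summing over $k$ and taking unconditional expectations delivers $W_M(\mathbf{\theta})\geq W_M(\mathbf{\theta}_0)$.

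For uniqueness, equality throughout forces, for every $k$, both $\mathbf{\Sigma}_{t_k}(\mathbf{\theta})=\mathbf{\Sigma}_{t_k}(\mathbf{\theta}_0)$ and $\mathbf{x}_{t_k/t_{k-1}}(\mathbf{\theta})=\mathbf{x}_{t_k/t_{k-1}}(\mathbf{\theta}_0)$ almost surely; the identifiability hypothesis assumed just before the lemma then yields $\mathbf{\theta}=\mathbf{\theta}_0$, so $\mathbf{\theta}_0$ is the unique minimum. The main obstacle will be ensuring that the conditional manipulations and the matrix inequality apply in the stated generality: this requires the a.s.\ positive definiteness of $\mathbf{\Sigma}_{t_k}(\mathbf{\theta})$ (precisely the lemma's hypothesis) together with integrability of the quadratic and log-determinant terms so that expectations can be interchanged with the inequality. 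Once these are verified, the argument reduces to the standard information-inequality / \emph{M}-estimator argument from the prediction-error framework of Ljung \& Caines (1979).
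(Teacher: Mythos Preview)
Your proof is correct and follows essentially the same route as the paper's. The paper's argument simply invokes Lemma~A.2 of Bollerslev \& Wooldridge (1992) to conclude that each conditional summand
\[
l_k(\mathbf{\theta})=E\bigl(\ln\det(\mathbf{\Sigma}_{t_k})+\mathbf{\nu}_{t_k}^{\intercal}\mathbf{\Sigma}_{t_k}^{-1}\mathbf{\nu}_{t_k}\bigm|Z_{t_{k-1}}\bigr)
\]
is uniquely minimized at $\mathbf{\theta}_0$, then sums and applies the standing identifiability assumption---exactly your two steps. The difference is only one of presentation: where the paper cites the Bollerslev--Wooldridge lemma, you unpack its content explicitly via the trace identity and the scalar inequality $\lambda-\ln\lambda\geq 1$. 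Your version is more self-contained; the paper's is shorter but relies on the external reference. One small point: from $\mathbf{\eta}_{t_k}(\mathbf{\theta})=0$ you only get $\mathbf{C}\mathbf{x}_{t_k/t_{k-1}}(\mathbf{\theta})=\mathbf{C}\mathbf{x}_{t_k/t_{k-1}}(\mathbf{\theta}_0)$, not equality of the full predictions, and similarly $\mathbf{\Sigma}_{t_k}(\mathbf{\theta})=\mathbf{\Sigma}_{t_k}(\mathbf{\theta}_0)$ only constrains $\mathbf{C}\mathbf{U}_{t_k/t_{k-1}}\mathbf{C}^{\intercal}$; but this is precisely what the identifiability hypothesis is there to convert into $\mathbf{\theta}=\mathbf{\theta}_0$, so the conclusion stands.
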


\begin{proof}
Since $\mathbf{\Sigma }_{t_{k}}$ is positive definite for all
$k=1,..,M-1$,
Lemma A.2 in Bollerslev \& Wooldridge\textbf{\ }(1992) ensures that $\mathbf{%
\theta }_{0}$ is the unique minimum of the function%
\begin{equation*}
l_{k}(\mathbf{\theta })=E(\ln (\det (\mathbf{\Sigma }_{t_{k}}))+\mathbf{\nu }%
_{t_{k}}^{\top }(\mathbf{\Sigma }_{t_{k}})^{-1}\mathbf{\nu }%
_{t_{k}}|Z_{t_{k-1}})
\end{equation*}%
on $\mathcal{D}_{\theta }$ for all $k$, where $\mathbf{\nu }_{t_{k}}=\mathbf{%
z}_{t_{k}}-\mathbf{Cx}_{t_{k}/t_{k-1}}$\ and $\mathbf{\Sigma }_{t_{k}}=%
\mathbf{CU}_{t_{k}/t_{k-1}}\mathbf{C}^{\intercal }+\mathbf{\Pi
}_{t_{k}}$.
Consequently and under the assumed unique identifiability of the model (\ref%
{SS1})-(\ref{SS2}), $\mathbf{\theta }_{0}$ is then the unique
minimum of
\begin{equation*}
W_{M}(\mathbf{\theta })=(M-1)\ln (2\pi )+\sum\limits_{k=1}^{M-1}E(l_{k}(\mathbf{%
\theta }))
\end{equation*}%
on $\mathcal{D}_{\theta }.$
\end{proof}

Denote by $U_{M,h}^{\prime }$ the derivative of $U_{M,h}$ with respect to $%
\mathbf{\theta }$, and by $W_{M}^{^{\prime \prime }}$ the second
derivative of $W_{M}$ with respect to $\mathbf{\theta }$.

\begin{theorem}
\label{Innovation main theorem}Let $Z$ be a time series of $M$
observations of the state space model (\ref{SS1})-(\ref{SS2}) with
$\mathbf{\theta
=\theta }_{0}$ on the time partition $\{t\}_{M}$. \ Let $\widehat{\mathbf{%
\theta }}_{M}(h)$ be an order-$\beta $ innovation estimator for the
parameters of (\ref{SS1}) given $Z$. Then
\begin{equation}
\widehat{\mathbf{\theta }}_{M}(h)-\mathbf{\theta }_{0}\rightarrow
\Delta \mathbf{\theta }_{M}(h)  \label{PLK1}
\end{equation}%
w.p.1 as $M\rightarrow \infty $, where $\Delta \mathbf{\theta }%
_{M}(h)\rightarrow 0$ as $h\rightarrow 0$. Moreover, if for some
$M_{0}\in
\mathbb{N}
$ there exists $\epsilon >0$ such that%
\begin{equation}
W_{M}^{^{\prime \prime }}(\mathbf{\theta })>\epsilon
\mathbf{I}\text{ \ \ \
and \ \ }\mathbf{H}_{M,h}(\mathbf{\theta )}=ME(U_{M,h}^{\prime }(\mathbf{%
\theta },Z\mathbf{)(}U_{M,h}^{\prime }(\mathbf{\theta },Z\mathbf{))}%
^{\intercal })>\epsilon \mathbf{I}  \label{PLK4}
\end{equation}%
for all $M>M_{0}$ and $\mathbf{\theta }\in \mathcal{D}_{\theta }$, then%
\begin{equation}
\sqrt{M}\mathbf{P}_{M,h}^{-1/2}(\widehat{\mathbf{\theta }}_{M}(h)-\mathbf{%
\theta }_{0})\sim \mathcal{N}(\Delta \mathbf{\theta
}_{M}(h),\mathbf{I}) \label{PLK2}
\end{equation}%
as $M\rightarrow \infty $, where $\mathbf{P}_{M,h}=(W_{M}^{^{\prime
\prime
}}(\mathbf{\theta }_{0}+\Delta \mathbf{\theta }_{M}(h)))^{-1}\mathbf{H}%
_{M,h}(\mathbf{\theta }_{0}+\Delta \mathbf{\theta
}_{M}(h))(W_{M}^{^{\prime \prime }}(\mathbf{\theta }_{0}+\Delta
\mathbf{\theta }_{M}(h)))^{-1}+\Delta
\mathbf{P}_{M,h}$ with $\Delta \mathbf{P}_{M,h}\rightarrow \mathbf{0}$ as $%
h\rightarrow 0$.
\end{theorem}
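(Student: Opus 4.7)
The plan is to apply the prediction-error convergence and asymptotic normality results of Ljung and Caines (1979) to the approximate criterion $U_{M,h}$ \emph{directly}, and then transfer the resulting pseudo-true parameter and sandwich covariance back to the exact objects $W_M$ and $\mathbf{H}_{M,h}$ by means of the decomposition already established in the proof of Theorem \ref{Innovation convergence theorem}.

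First I would introduce the pseudo-true parameter
\[
\mathbf{\theta}_M^{*}(h)=\arg\min_{\mathbf{\theta}\in\mathcal{D}_{\theta}} W_{M,h}(\mathbf{\theta}),\qquad W_{M,h}(\mathbf{\theta})=E(U_{M,h}(\mathbf{\theta},Z)),
\]
and verify the regularity conditions of Ljung--Caines for the prediction-error criterion $U_{M,h}$ driven by the order-$\beta$ LMV filter: the predictors $\mathbf{y}_{t_k/t_{k-1}}$ and $\mathbf{V}_{t_k/t_{k-1}}$ are smooth in $\mathbf{\theta}$ by the recursion (\ref{LMVF12 App})--(\ref{LMVF5 App}), and the moment bound (\ref{Inn LMVF6}) together with positive definiteness of $\mathbf{\Sigma}_{h,t_{k}}$ provides the required integrability. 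Their Theorem 1 then yields $\widehat{\mathbf{\theta}}_M(h)\to \mathbf{\theta}_M^{*}(h)$ w.p.1 as $M\to\infty$, so that defining $\Delta\mathbf{\theta}_M(h)=\mathbf{\theta}_M^{*}(h)-\mathbf{\theta}_0$ gives (\ref{PLK1}).

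To show $\Delta\mathbf{\theta}_M(h)\to 0$ as $h\to 0$, I would invoke the identity $U_{M,h}(\mathbf{\theta},Z)=U_M(\mathbf{\theta},Z)+R_{M,h}(\mathbf{\theta})$ from the proof of Theorem \ref{Innovation convergence theorem}. Taking expectations gives $W_{M,h}(\mathbf{\theta})=W_M(\mathbf{\theta})+E(R_{M,h}(\mathbf{\theta}))$, and the bounds (\ref{Inn LMVF10}), combined with the finite moments of $\mathbf{x}$, $\mathbf{y}$ and $\mathbf{z}$, force $E(R_{M,h}(\mathbf{\theta}))\to 0$ as $h\to 0$, uniformly on the compact set $\mathcal{D}_{\theta}$. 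Since Lemma \ref{Innovation Lemma} identifies $\mathbf{\theta}_0$ as the unique minimizer of $W_M$, and (\ref{PLK4}) supplies local strict convexity, a standard argmin-continuity argument (uniform convergence of $W_{M,h}$ to $W_M$ on a compact set plus an isolated strict minimum) yields $\mathbf{\theta}_M^{*}(h)\to\mathbf{\theta}_0$, i.e.\ $\Delta\mathbf{\theta}_M(h)\to 0$.

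For the asymptotic normality (\ref{PLK2}), I would apply the central limit statement of Ljung--Caines at the pseudo-true parameter: under (\ref{PLK4}) it gives
\[
\sqrt{M}\bigl(\widehat{\mathbf{\theta}}_M(h)-\mathbf{\theta}_M^{*}(h)\bigr)\sim \mathcal{N}\bigl(0,\,[W_{M,h}^{\prime\prime}(\mathbf{\theta}_M^{*}(h))]^{-1}\mathbf{H}_{M,h}(\mathbf{\theta}_M^{*}(h))[W_{M,h}^{\prime\prime}(\mathbf{\theta}_M^{*}(h))]^{-1}\bigr).
\]
Writing $\widehat{\mathbf{\theta}}_M(h)-\mathbf{\theta}_0=(\widehat{\mathbf{\theta}}_M(h)-\mathbf{\theta}_M^{*}(h))+\Delta\mathbf{\theta}_M(h)$ shifts the mean to $\Delta\mathbf{\theta}_M(h)$. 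To replace $W_{M,h}^{\prime\prime}$ by $W_M^{\prime\prime}$ evaluated at $\mathbf{\theta}_0+\Delta\mathbf{\theta}_M(h)$, I would differentiate the identity $W_{M,h}=W_M+E(R_{M,h})$ twice in $\mathbf{\theta}$: the $\mathbf{\theta}$-sensitivities of $\mathbf{y}_{t_k/t_{k-1}}$ and $\mathbf{V}_{t_k/t_{k-1}}$ satisfy linear recursions whose coefficients converge at order $h^{\beta}$ to those of the exact filter, so that $W_{M,h}^{\prime\prime}(\mathbf{\theta})=W_M^{\prime\prime}(\mathbf{\theta})+\Delta W_{M,h}^{\prime\prime}(\mathbf{\theta})$ with $|\Delta W_{M,h}^{\prime\prime}|\to 0$ as $h\to 0$. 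Absorbing this correction into the sandwich formula yields the covariance $\mathbf{P}_{M,h}$ with $\Delta\mathbf{P}_{M,h}\to 0$, which is (\ref{PLK2}).

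The main obstacle is the last step: justifying the interchange of differentiation and expectation and obtaining the \emph{uniform} in $\mathbf{\theta}$ decay of $\Delta W_{M,h}^{\prime\prime}$. This requires establishing that the parameter-sensitivity equations of the order-$\beta$ LMV filter inherit the same $O(h^{\beta})$ weak bounds as in (\ref{Inn LMVF7})--(\ref{Inn LMVF10})---essentially a differentiated version of Theorem 5 of Jimenez (2012b). Once such a uniform bound is in place, dominated convergence handles both the bias analysis of $\Delta\mathbf{\theta}_M(h)$ and the Hessian comparison for $\Delta\mathbf{P}_{M,h}$, and the Ljung--Caines machinery supplies the rest.
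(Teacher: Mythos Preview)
Your proposal is correct and follows essentially the same route as the paper: apply Ljung--Caines (1979) directly to $U_{M,h}$ to obtain consistency and asymptotic normality centred at the pseudo-true value $\alpha_M(h)=\arg\min W_{M,h}$, then use the decomposition $W_{M,h}=W_M+E(R_{M,h})$ together with Lemma \ref{Innovation Lemma} and the filter-convergence bounds to show $\Delta\mathbf{\theta}_M(h)=\alpha_M(h)-\mathbf{\theta}_0\to 0$ and to rewrite the sandwich covariance with $W_M''$ in place of $W_{M,h}''$ via a Woodbury-type correction. The obstacle you flag---that $R_{M,h}''\to 0$ really requires a differentiated version of the order-$\beta$ filter bounds---is precisely the step the paper also passes over without detailed justification, so your caution there is well placed rather than a deficiency relative to the original argument.
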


\begin{proof}
Let $W_{M,h}(\mathbf{\theta })=E(U_{M,h}(\mathbf{\theta },Z))$ and $\mathbf{%
\alpha }_{M}(h)=\arg \{\underset{\mathbf{\theta \in }\mathcal{D}_{\theta }}{%
\mathbf{\min }}W_{M,h}(\mathbf{\theta })\}$, where $U_{M,h}$ is defined in (%
\ref{order-B innovation estimator}).

For a $h$ fixed, Theorem 1 in Ljung \& Caines (1979) implies that%
\begin{equation}
\widehat{\mathbf{\theta }}_{M}(h)-\mathbf{\alpha }_{M}(h)\rightarrow
0 \label{PLK7}
\end{equation}%
w.p.1 as $M\rightarrow \infty $; and
\begin{equation}
\sqrt{M}\mathbf{P}_{M,h}^{-1/2}(\mathbf{\alpha }_{M}(h))(\widehat{\mathbf{%
\theta }}_{M}(h)-\mathbf{\alpha }_{M}(h))\sim
\mathcal{N}(0,\mathbf{I}) \label{PLK8}
\end{equation}%
as $M\rightarrow \infty $, where
\begin{equation*}
\mathbf{P}_{M,h}(\mathbf{\theta })=(W_{M,h}^{\prime \prime }(\mathbf{\theta }%
))^{-1}\text{ }\mathbf{H}_{M,h}(\mathbf{\theta })\text{
}(W_{M,h}^{\prime \prime }(\mathbf{\theta }))^{-1}
\end{equation*}%
with $\mathbf{H}_{M,h}(\mathbf{\theta })=ME(U_{M,h}^{\prime
}(\mathbf{\theta },Z\mathbf{)(}U_{M,h}^{\prime }(\mathbf{\theta
},Z\mathbf{))}^{\intercal })$.

By using the identities (\ref{PLK Ind 1})-(\ref{PLK Ind 3}), the
function
\begin{equation*}
W_{M,h}(\mathbf{\theta })=(M-1)\ln (2\pi )+\sum\limits_{k=1}^{M-1}E(\ln (\det (%
\mathbf{\Sigma }_{h,t_{k}}))+(\mathbf{z}_{t_{k}}-\mathbf{\mu }%
_{h,t_{k}})^{\intercal }(\mathbf{\Sigma }_{h,t_{k}})^{-1}(\mathbf{z}_{t_{k}}-%
\mathbf{\mu }_{h,t_{k}})),
\end{equation*}%
with $\mathbf{\mu }_{h,t_{k}}=\mathbf{Cy}_{t_{k}/t_{k-1}}$, can be
written as
\begin{equation}
W_{M,h}(\mathbf{\theta })=W_{M}(\mathbf{\theta })+E(R_{M,h}(\mathbf{\theta }%
)),  \label{PLK3}
\end{equation}%
where $W_{M}$ is defined in (\ref{PLKW}) and%
\begin{eqnarray*}
R_{M,h}(\mathbf{\theta }) &=&\sum\limits_{k=1}^{M-1}E(\ln (\det (\mathbf{I}-%
\mathbf{\Sigma }_{t_{k}}^{-1}\Delta \mathbf{\Sigma }_{h,t_{k}}))|\mathcal{F}%
_{t_{k-1}})+E((\mathbf{z}_{t_{k}}-\mathbf{\mu }_{t_{k}})^{\intercal }\mathbf{%
M}_{h,t_{k}}\mathbf{(z}_{t_{k}}-\mathbf{\mu }_{t_{k}})|\mathcal{F}_{t_{k-1}}) \\
&&+E((\mathbf{z}_{t_{k}}-\mathbf{\mu }_{t_{k}})^{\intercal }(\mathbf{\Sigma }%
_{h,t_{k}})^{-1}(\mathbf{\mu }_{t_{k}}-\mathbf{\mu }_{h,t_{k}})|\mathcal{F}%
_{t_{k-1}})+E((\mathbf{\mu }_{t_{k}}-\mathbf{\mu }_{h,t_{k}})^{\intercal }(%
\mathbf{\Sigma }_{h,t_{k}})^{-1}(\mathbf{z}_{t_{k}}-\mathbf{\mu }_{t_{k}})|%
\mathcal{F}_{t_{k-1}}) \\
&&+E((\mathbf{\mu }_{t_{k}}-\mathbf{\mu }_{h,t_{k}})^{\intercal }(\mathbf{%
\Sigma }_{h,t_{k}})^{-1}(\mathbf{\mu }_{t_{k}}-\mathbf{\mu }_{h,t_{k}})|%
\mathcal{F}_{t_{k-1}})
\end{eqnarray*}%
with $\mathbf{M}_{h,t_{k}}=\mathbf{\Sigma }_{t_{k}}^{-1}\Delta \mathbf{%
\Sigma }_{h,t_{k}}(\mathbf{I-\Sigma }_{t_{k}}^{-1}\Delta \mathbf{\Sigma }%
_{h,t_{k}})^{-1}\mathbf{\Sigma }_{t_{k}}^{-1}$, $\mathbf{\mu }_{t_{k}}=%
\mathbf{Cx}_{t_{k}/t_{k-1}}$ and $\Delta \mathbf{\Sigma }_{h,t_{k}}=%
\mathbf{\Sigma }_{t_{k}}-\mathbf{\Sigma }_{h,t_{k}}$.

Denote by $W_{M,h}^{\prime \prime }$ and $R_{M,h}^{\prime \prime }$
the second derivative of $W_{M,h}$ and $R_{M,h}$ with respect to
$\mathbf{\theta }$.

Taking into account that
\begin{eqnarray*}
(W_{M,h}^{\prime \prime }(\mathbf{\theta }))^{-1} &=&(W_{M}^{\prime \prime }(%
\mathbf{\theta })+E(R_{M,h}^{\prime \prime }(\mathbf{\theta })))^{-1} \\
&=&(W_{M}^{\prime \prime }(\mathbf{\theta }))^{-1}+\mathbf{K}_{M,h}(\mathbf{%
\theta })
\end{eqnarray*}%
with
\begin{equation*}
\mathbf{K}_{M,h}(\mathbf{\theta })=-(W_{M}^{\prime \prime }(\mathbf{\theta }%
))^{-1}E(R_{M,h}^{\prime \prime }(\mathbf{\theta }))(\mathbf{I}%
+(W_{M}^{\prime \prime }(\mathbf{\theta }))^{-1}E(R_{M,h}^{\prime \prime }(%
\mathbf{\theta })))^{-1}(W_{M}^{\prime \prime }(\mathbf{\theta
}))^{-1},
\end{equation*}%
it is obtained that%
\begin{equation}
\mathbf{P}_{M,h}(\mathbf{\theta })=(W_{M}^{^{\prime \prime
}}(\mathbf{\theta
}))^{-1}\mathbf{H}_{M,h}(\mathbf{\theta })(W_{M}^{^{\prime \prime }}(\mathbf{%
\theta }))^{-1}+\Delta \mathbf{P}_{M,h}(\mathbf{\theta }),
\label{PLK12}
\end{equation}%
where%
\begin{equation*}
\Delta \mathbf{P}_{M,h}(\mathbf{\theta })=\mathbf{K}_{M,h}(\mathbf{\theta })%
\mathbf{H}_{M,h}(\mathbf{\theta })(W_{M}^{^{\prime \prime }}(\mathbf{\theta }%
))^{-1}+(W_{M}^{^{\prime \prime }}(\mathbf{\theta }))^{-1}\mathbf{H}_{M,h}(%
\mathbf{\theta })\mathbf{K}_{M,h}(\mathbf{\theta })+\mathbf{K}_{M,h}(\mathbf{%
\theta })\mathbf{H}_{M,h}(\mathbf{\theta })\mathbf{K}_{M,h}(\mathbf{\theta }%
).
\end{equation*}

Theorem 5 in Jimenez (2012b) deals with the convergence of the order-$%
\beta $ filters to the exact LMV one. In particular, for the
predictions, it states that
\begin{equation*}
\left\vert
\mathbf{x}_{t_{k}/t_{k-1}}-\mathbf{y}_{t_{k}/t_{k-1}}\right\vert
\leq Kh^{\beta }\text{ \ \ \ \ and \ \ }\left\vert \mathbf{U}%
_{t_{k}/t_{k-1}}-\mathbf{V}_{t_{k}/t_{k-1}}\right\vert \leq
Kh^{\beta }
\end{equation*}%
for all $t_{k},t_{k+1}\in \{t\}_{M}$, where $K$ is a positive
constant.
Here, we recall that $\mathbf{x}_{t_{k}/t_{k-1}}$ and $\mathbf{U}%
_{t_{k}/t_{k-1}}$ are the predictions of the exact LMV filter for the model (%
\ref{SS1})-(\ref{SS2}), whereas $\mathbf{y}_{t_{k}/t_{k-1}}$ and $\mathbf{V}%
_{t_{k}/t_{k-1}}$ are the predictions of the order-$\beta $ filter.
From
this and taking into account that $\mathbf{\mu }_{t_{k}}-\mathbf{\mu }%
_{h,t_{k}}=\mathbf{C(\mathbf{x}}_{t_{k}/t_{k-1}}-\mathbf{\mathbf{y}}%
_{t_{k}/t_{k-1}}\mathbf{)}$ and $\Delta \mathbf{\Sigma }_{h,t_{k}}=$\ $%
\mathbf{C(U}_{t_{k}/t_{k-1}}-\mathbf{V}_{t_{k}/t_{k-1}})\mathbf{C}%
^{\intercal }$ follows that%
\begin{equation*}
\left\vert \mathbf{\mu }_{t_{k}}-\mathbf{\mu }_{h,t_{k}}\right\vert
\rightarrow \mathbf{0}\text{ \ \ \ \ \ \ and \ \ \ \ \ }\left\vert \mathbf{%
\Sigma }_{t_{k}}-\mathbf{\Sigma }_{h,t_{k}}\right\vert \rightarrow
\mathbf{0}
\end{equation*}%
as $h\rightarrow 0$ for all $\mathbf{\theta }\in \mathcal{D}_{\theta }$ and $%
k=1,..,M-1$. This and the finite bound for the first two conditional
moments
of $\mathbf{x}$ and $\mathbf{y}$ imply that $\left\vert R_{M,h}(\mathbf{%
\theta },Z)\right\vert \rightarrow 0$ and $\left\vert
R_{M,h}^{\prime \prime }(\mathbf{\theta },Z)\right\vert \rightarrow
0$ as well with $h$. From this and (\ref{PLK3}), it is obtained that
\begin{equation}
W_{M,h}(\mathbf{\theta })\rightarrow W_{M}(\mathbf{\theta })\text{ \
\ \ \ and \ \ \ \ }W_{M,h}^{\prime \prime }(\mathbf{\theta
})\rightarrow
W_{M}^{\prime \prime }(\mathbf{\theta })\text{ \ \ \ as \ \ \ \ \ }%
h\rightarrow 0.  \label{PLK9}
\end{equation}%
In addition, left (\ref{PLK9}) and Lemma \ref{Innovation Lemma}
imply that
\begin{equation}
\Delta \mathbf{\theta }_{M}(h)=\mathbf{\alpha }_{M}(h)-\mathbf{\theta }%
_{0}=\arg \{\underset{\mathbf{\theta \in }\mathcal{D}_{\theta }}{\mathbf{%
\min }}W_{M,h}(\mathbf{\theta })\}-\arg \{\underset{\mathbf{\theta \in }%
\mathcal{D}_{\theta }}{\mathbf{\min }}W_{M}(\mathbf{\theta })\}\rightarrow 0%
\text{ \ \ \ as \ \ \ \ \ }h\rightarrow 0,  \label{PLK11}
\end{equation}%
whereas from right (\ref{PLK9}) follows that%
\begin{equation}
\Delta \mathbf{P}_{M,h}(\mathbf{\theta })\rightarrow 0\text{ \ \ \
as \ \ \ \ \ }h\rightarrow 0.  \label{PLK10}
\end{equation}

Finally, (\ref{PLK11})-(\ref{PLK10}) together (\ref{PLK7}),
(\ref{PLK8}) and (\ref{PLK12}) imply that (\ref{PLK1}) and
(\ref{PLK2}) hold, which completes the proof.
\end{proof}

Theorem \ref{Innovation main theorem} states that, for an increasing
number of observations, the order-$\beta $ innovation estimator
$\widehat{\mathbf{ \theta }}_{M}(h)$ is asymptotically normal
distributed and its bias decreases when $h$ goes to zeros. This is a
predictable result due to the asymptotic properties of the exact
innovation estimator $\widehat{\mathbf{ \theta }}_{M}$ derived from
Theorem 1 in Ljung \& Caines (1979) and the convergence of the
approximate estimator $\widehat{\mathbf{\theta }}_{M}(h)$ to
$\widehat{\mathbf{\theta }}_{M}$ given by Theorem \ref{Innovation
convergence theorem} when $h$ goes to zero. Further note that, when
$h=0$, the Theorem \ref{Innovation main theorem} reduces to Theorem
1 in Ljung \& Caines (1979) for the exact innovation estimator
$\widehat{\mathbf{\theta }} _{M}$. This is other expected result
since the order-$\beta $ innovation estimator
$\widehat{\mathbf{\theta }}_{M}(h)$ reduces to the exact one $
\widehat{\mathbf{\theta }}_{M}$ when $h=0$. Further note that,
neither in Theorem \ref{Innovation main theorem} there are
restrictions on the time partition $\{t\}_{M}$ for the data or on
the time discretization $(\tau )_{h}$ $ \supset \{t\}_{M}$ on which
the approximate estimator is defined. Therefore, the comments about
them at the end of the previous subsection are valid here as well.

\subsection{Models with nonlinear observation equation}

Previous definitions and results have been stated for models with
linear observation equation. However, by following the procedure
proposed in Jimenez and Ozaki (2006), they can be easily applied as
well to state space models with nonlinear observation equation.

For illustrate this, let us consider the state space model defined
by the continuous state equation (\ref{SS1}) and the discrete
observation equation

\begin{equation}
\mathbf{z}(t_{k})=\mathbf{h}(t_{k},\mathbf{x}(t_{k}))+\mathbf{e}_{t_{k}},%
\text{ for }k=0,1,..,M-1,  \label{SS3}
\end{equation}%
where $\mathbf{e}_{t_{k}}$ is defined as in (\ref{SS2}) and $\mathbf{h}:$ $%
\mathbb{R}\times \mathbb{R}^{d}\rightarrow \mathbb{R}^{r}$ is a
twice differentiable function. By using the Ito formula,
\begin{align*}
d\mathbf{h}^{j}& =\mathbf{\{}\frac{\partial \mathbf{h}^{j}}{\partial t}%
+\sum\limits_{k=1}^{d}f^{k}\frac{\partial \mathbf{h}^{j}}{\partial \mathbf{x}%
^{k}}+\frac{1}{2}\sum\limits_{s=1}^{m}\sum\limits_{k,l=1}^{d}\mathbf{g}%
_{s}^{l}\mathbf{g}_{s}^{k}\frac{\partial
^{2}\mathbf{h}^{j}}{\partial
\mathbf{x}^{l}\partial \mathbf{x}^{k}}\mathbf{\}}dt+\sum\limits_{s=1}^{m}%
\sum\limits_{l=1}^{d}\mathbf{g}_{s}^{l}\frac{\partial \mathbf{h}^{j}}{%
\partial \mathbf{x}^{l}}d\mathbf{w}^{s} \\
& =\mathbf{\rho }^{j}dt+\sum\limits_{s=1}^{m}\mathbf{\sigma }_{s}^{j}d%
\mathbf{w}^{s}
\end{align*}%
with $j=1,..,r$. Hence, the state space model
(\ref{SS1})+(\ref{SS3}) is transformed to the following
higher-dimensional state space model with linear observation
\begin{equation*}
d\mathbf{v}(t)=\mathbf{a}(t,\mathbf{v}(t))dt+\sum\limits_{i=1}^{m}\mathbf{b}%
_{i}(t,\mathbf{v}(t))d\mathbf{w}^{i}(t),
\end{equation*}%
\begin{equation*}
\mathbf{z}(t_{k})=\mathbf{Cv}(t_{k})+\mathbf{e}_{t_{k}},\text{ for }%
k=0,1,..,M-1,
\end{equation*}%
where
\begin{equation*}
\mathbf{v}=\left[
\begin{array}{l}
\mathbf{x} \\
\mathbf{h}%
\end{array}%
\right] ,\text{ }\mathbf{a}=\left[
\begin{array}{l}
\mathbf{f} \\
\mathbf{\rho }%
\end{array}%
\right] ,\text{ }\mathbf{b}_{i}=\left[
\begin{array}{l}
\mathbf{g}_{i} \\
\mathbf{\sigma }_{i}%
\end{array}%
\right]
\end{equation*}%
and the matrix $\mathbf{C}$ is such that $\mathbf{h}(t_{k},\mathbf{x}%
(t_{k}))=\mathbf{Cv}(t_{k})$.

In this way, the state space model (\ref{SS1})+(\ref{SS3}) is
transformed to the form of the model (\ref{SS1})-(\ref{SS2}), and so
the previous definition and results related to the order-$\beta $
innovation estimator can be applied. Further, note that if the
nonlinear function $h$ depends of unknown parameters, they can be
estimated as well by the approximate innovation method.

\subsection{Models with noise free complete observations\label{Section on
complete observations}}

This section deals with the particular case that the observation
noise is zero and all components of the diffusion process defined in
(\ref{SS1}) are
discretely observed. That is, when $\mathbf{C}\equiv \mathbf{I}$ and $%
\mathbf{\Pi }_{t_{k}}=\mathbf{0}$ in (\ref{SS2}) for all $k$, where $\mathbf{%
I}$ denotes the $d$-dimensional identity matrix. Hence, the
inference problem under consideration in this paper reduces then to
the well known problem of parameter estimation of diffusion
processes from complete observations. In this situation, it is easy
to realize that the innovation estimator (\ref{Innovation
estimator}) reduces to the well known quasi-maximum likelihood (QML)
estimator for SDEs, and that the approximate order-$\beta $
innovation estimator (\ref{order-B innovation estimator}) reduces to
the approximate order-$\beta $ QML estimator introduced in Jimenez
(2012c) for the estimation of SDEs from complete observations. For
the same reason, Theorems \ref{Innovation convergence theorem},
\ref{Inn week convergence} and \ref{Innovation main theorem} reduce
to those corresponding in Jimenez (2012c) concerning the convergence
and asymptotic properties of the approximate order-$\beta $ QML
estimator.

\section{Order-$\protect\beta $ innovation estimator based on LL filters}

Since, in principle, any approximate filter converging to LMV filter
of the model (\ref{SS1})-(\ref{SS2}) can be used to construct an
order-$\beta $ innovation estimator, some additional criterions
could be considered for the selection of one of them. For instance,
high order of convergence, efficiency of the algorithm from
computational viewpoint, and so on. In this paper, we elected the
order-$\beta $ Local Linearization (LL) filters proposed in Jimenez
(2012b) for the following reasons: 1) their predictions have simple
explicit formulas that can be computed by means of efficient
algorithm (including high dimensional equations); 2) their
predictions are exact for linear SDEs in all the possible variants
(with additive and/or
multiplicative noise, autonomous or not); 3) they have an adequate order $%
\beta =1,2$ of convergence; and 4) the better performance of the
approximate innovation estimators based on conventional LL filters
(see, e.g., Ozaki, 1994; Shoji, 1998; Singer, 2002).

According to Jimenez (2012b), the order-$\beta $ LL filter is defined on $%
\left( \tau \right) _{h}\supset \{t\}_{M}$ in terms of the
order-$\beta $
Local Linear approximation $\mathbf{y}$ that satisfies the conditions (\ref%
{Inn LMVF5})-(\ref{Inn LMVF7}). Denote by $\mathbf{y}_{\tau _{n}/t_{k}}=E(%
\mathbf{y}(\tau _{n})|Z_{t_{k}})$ and $\mathbf{P}_{\tau _{n}/t_{k}}=E(%
\mathbf{y(}\tau _{n})\mathbf{y}^{\intercal }(\tau _{n})|Z_{t_{k}})$
the first two conditional moment of $\mathbf{y}$ at $\tau _{n}$
given the
observations $Z_{t_{k}}$, for all $\tau _{n}\in \{\left( \tau \right) _{h}$ $%
\cap $ $[t_{k},t_{k+1}]\}$ and $k=0,..,M-2$.

Starting with the initial filter values $\mathbf{y}_{t_{0}/t_{0}}=\mathbf{x}%
_{t_{0}/t_{0}}$ and
$\mathbf{P}_{t_{0}/t_{0}}=\mathbf{Q}_{t_{0}/t_{0}}$, the LL filter
algorithm performs the recursive computation of :

\begin{enumerate}
\item the predictions $\mathbf{y}_{\tau _{n}/t_{k}}$and $\mathbf{P}_{\tau
_{n}/t_{k}}$ for all $\tau _{n}\in \{\left( \tau \right) _{h}$ $\cap $ $%
(t_{k},t_{k+1}]\}$ by means of the recursive formulas (\ref{ALLF8}%
)-(\ref{ALLF9}) given in the Appendix, and the prediction variance
\begin{equation*}
\mathbf{V}_{t_{k+1}/t_{k}}=\mathbf{P}_{t_{k+1}/t_{k}}-\mathbf{y}%
_{t_{k+1}/t_{k}}\mathbf{y}_{t_{k+1}/t_{k}}^{\intercal };
\end{equation*}

\item the filters
\begin{align*}
\mathbf{y}_{t_{k+1}/t_{k+1}}& =\mathbf{y}_{t_{k+1}/t_{k}}+\mathbf{K}%
_{t_{k+1}}\mathbf{(\mathbf{z}}_{t_{k+1}}-\mathbf{\mathbf{C}y}_{t_{k+1}/t_{k}}%
\mathbf{)}, \\
\mathbf{V}_{t_{k+1}/t_{k+1}}& =\mathbf{V}_{t_{k+1}/t_{k}}-\mathbf{K}%
_{t_{k+1}}\mathbf{CV}_{t_{k+1}/t_{k}}, \\
\mathbf{P}_{t_{k+1}/t_{k+1}}& =\mathbf{V}_{t_{k+1}/t_{k+1}}+\mathbf{y}%
_{t_{k+1}/t_{k+1}}\mathbf{y}_{t_{k+1}/t_{k+1}}^{\intercal },
\end{align*}%
with filter gain%
\begin{equation*}
\mathbf{K}_{t_{k+1}}=\mathbf{V}_{t_{k+1}/t_{k}}\mathbf{C}^{\intercal }%
{\Large (}\mathbf{CV}_{t_{k+1}/t_{k}}\mathbf{C}^{\intercal
}+\mathbf{\Pi }_{t_{k+1}})^{-1};
\end{equation*}
\end{enumerate}

\noindent for each $k$, with $k=0,1,\ldots ,M-2$. \indent

Under general conditions, the convergence of the order-$\beta $ LL
filter to exact LMV filter when $h$ goes to zero has been stated by
Theorem 10 in Jimenez (2012b). Hence, Theorem \ref{Innovation
convergence theorem} implies that the LL-based innovation estimator
\begin{equation}
\widehat{\mathbf{\theta }}_{M}(h)=\arg \{\underset{\mathbf{\theta }}{\mathbf{%
\min }}U_{M,h}\mathbf{(\theta },Z)\},  \label{LL-based innovation
estimator}
\end{equation}%
with%
\begin{equation*}
U_{M,h}(\mathbf{\theta },Z)=(M-1)\ln (2\pi )+\sum\limits_{k=1}^{M-1}\ln (\det (\mathbf{\Sigma }_{t_{k}/t_{k-1}}))+(\mathbf{z}_{t_{k}}-\mathbf{Cy}%
_{t_{k}/t_{k-1}})^{\intercal }(\mathbf{\Sigma }_{t_{k}/t_{k-1}})^{-1}(\mathbf{z}%
_{t_{k}}-\mathbf{Cy}_{t_{k}/t_{k-1}}),
\end{equation*}%
converges to the exact one (\ref{Innovation estimator}) as $h$ goes
to zero for all given $Z$, where $\mathbf{\Sigma
}_{t_{k}/t_{k-1}}=\mathbf{CV}_{t_{k}/t_{k-1}}\mathbf{C}^{\intercal
}+\mathbf{\Pi }_{t_{k}}$. For the same reason, this order-$\beta $
innovation estimator has the asymptotic properties stated in Theorem
\ref{Innovation main theorem}, and the average of their values for
different realizations of the state space model
(\ref{SS1})-(\ref{SS2}) satisfies the convergence property of
Theorem \ref{Inn week convergence}.

Note that, when $\left( \tau \right) _{h}\equiv \{t\}_{M}$, the
order-$\beta $ LL filter reduces to the conventional LL filter. In
this situation, the order-$\beta $ innovation estimator
(\ref{LL-based innovation estimator}) reduces to the conventional
innovation estimators of Ozaki (1994) or Shoji (1998) for SDEs with
additive noise, and to that of Jimenez and Ozaki (2006) for SDEs
with multiplicative noise. It is worth to emphasize here that, for
each data $\mathbf{z}_{k}$, the formulas (\ref{ALLF8})-(\ref{ALLF9})
for the predictions are recursively evaluated at all the time
instants $\tau _{n}\in \{\left( \tau \right) _{h}$ $\cap $
$(t_{k},t_{k+1}]\}$ for the order-$\beta$ estimator, whereas they
are evaluated only at $t_{k+1}=\{\{t\}_{M}$ $\cap $ $
(t_{k},t_{k+1}]\}$ for the conventional ones. In addition, since the
predictions of the order-$\beta $ LL filter are exact for linear
SDEs, the order-$\beta $ innovation estimator (\ref{LL-based
innovation estimator}) reduces to the maximum likelihood estimator
of Schweppe (1965) for linear equations with additive noise.

In practical situations, it is convenient to write a code that
automatically determines the time discretization $\left( \tau
\right) _{h}$ for achieving
a prescribed absolute ($atol_{\mathbf{y}},atol_{\mathbf{P}})$ and relative ($%
rtol_{\mathbf{y}},rtol_{\mathbf{P}})$ error tolerance in the computation of $%
\mathbf{y}_{t_{k+1}/t_{k}}$ and $\mathbf{P}_{t_{k+1}/t_{k}}$. With
this purpose the adaptive strategy proposed in Jimenez (2012b) is
useful.

\section{Simulation study}

In this section, the performance of the new approximate estimators
is illustrated, by means of simulations, with four test SDEs. To do
so, four
types of innovation estimators are computed and compared: 1) the exact one (%
\ref{Innovation estimator}), when it is possible; 2) the
conventional one based on the LL filter. That is, the estimator
defined by (\ref{LL-based innovation estimator}) with $\left( \tau
\right) _{h}\equiv \{t\}_{M}$ and $\beta =1$; 3) the order-$1$
innovation estimator (\ref{LL-based innovation estimator}) with
various uniform time
discretizations $\left( \tau \right) _{h,T}^{u}$; and 4) the adaptive order-$%
1$ innovation estimator (\ref{LL-based innovation estimator}) with
the adaptive selection of time discretizations $\left( \tau \right)
_{\cdot ,T}$ proposed in Jimenez (2012b). For each example,
histograms and confidence limits for the estimators are computed
from various sets of discrete and noisy observations taken with
different time distances (sampling periods) on time intervals with
distinct lengths.

\subsection{Test models\label{Section Test models}}

\textbf{Example 1.} State equation with multiplicative noise%
\begin{equation}
dx=\alpha txdt+\sigma \sqrt{t}xdw_{1}  \label{SE EJ1}
\end{equation}%
and observation equation
\begin{equation}
z_{t_{k}}=x(t_{k})+e_{t_{k}},\text{ for }k=0,1,..,M-1,  \label{OE
EJ1}
\end{equation}%
with $\alpha =-0.1$, $\sigma =0.1$ and observation noise variance
$\Pi =0.0001$. For this state equation, the predictions for the
first two conditional moments are
\[
x_{t_{k+1}/t_{k}}=x_{t_{k}/t_{k}}e^{\alpha
(t_{k+1}^{2}-t_{k}^{2})/2}\text{ \ and\ \ \ \
}Q_{t_{k+1}/t_{k}}=Q_{t_{k}/t_{k}}e^{(\alpha +\sigma
^{2}/2)(t_{k+1}^{2}-t_{k}^{2})},
\]%
where the filters $x_{t_{k}/t_{k}}$ and $Q_{t_{k}/t_{k}}$ are
obtained from the well-known formulas of the exact LMV filter for
all $k=0,1,..,M-2$, with initial values $x_{t_{0}/t_{0}}=1$ and
$Q_{t_{0}/t_{0}}=1$ at $t_{0}=0.5$.

\textbf{Example 2.} State equation with two additive noise%
\begin{equation}
dx=\alpha txdt+\sigma t^{2}e^{\alpha t^{2}/2}dw_{1}+\rho
\sqrt{t}dw_{2} \label{SE EJ2}
\end{equation}%
and observation equation
\begin{equation}
z_{t_{k}}=x(t_{k})+e_{t_{k}},\text{ for }k=0,1,..,M-1,  \label{OE
EJ2}
\end{equation}%
with $\alpha =-0.25$, $\sigma =5$, $\rho =0.1$  and observation
noise variance $\Pi =0.0001$. For this state equation, the
predictions for the
first two conditional moments are%
\[
x_{t_{k+1}/t_{k}}=x_{t_{k}/t_{k}}e^{\alpha
(t_{k+1}^{2}-t_{k}^{2})/2}\text{ }
\]%
and
\[
Q_{t_{k+1}/t_{k}}=(Q_{t_{k}/t_{k}}+\frac{\rho ^{2}}{2\alpha
})e^{\alpha
(t_{k+1}^{2}-t_{k}^{2})}+\frac{\sigma ^{2}}{5}(t_{k+1}^{5}-t_{k}^{5})e^{%
\alpha t_{k+1}^{2}}-\frac{\rho ^{2}}{2\alpha },
\]%
where the filters $x_{t_{k}/t_{k}}$ and $Q_{t_{k}/t_{k}}$ are
obtained from the formulas of the exact LMV filter for all
$k=0,1,..,M-2$, with initial values $x_{t_{0}/t_{0}}=10$ and
$Q_{t_{0}/t_{0}}=100$ at $t_{0}=0.01$.

\textbf{Example 3.} Van der Pool oscillator with random input
(Gitterman,
2005)%
\begin{align}
dx_{1}& =x_{2}dt  \label{SEa EJ3} \\
dx_{2}& =(-(x_{1}^{2}-1)x_{2}-x_{1}+\alpha )dt+\sigma dw  \label{SEb
EJ3}
\end{align}%
and observation equation
\begin{equation}
z_{t_{k}}=x_{1}(t_{k})+e_{t_{k}},\text{ for }k=0,1,..,M-1, \label{OE
EJ3}
\end{equation}%
where $\alpha =0.5$ and $\sigma ^{2}=(0.75)^{2}$ are the intensity
and the variance of the random input, respectively. In addition,
$\Pi =0.001$ is the
observation noise variance, and $\mathbf{x}_{t_{0}/t_{0}}^{\intercal }=[1$ $%
1]$ and $\mathbf{Q}_{t_{0}/t_{0}}=\mathbf{x}_{t_{0}/t_{0}}\mathbf{x}%
_{t_{0}/t_{0}}^{\intercal }$ are the initial filter values at
$t_{0}=0$.

\textbf{Example 4.} Van der Pool oscillator with random frequency
(Gitterman, 2005)%
\begin{align}
dx_{1}& =x_{2}dt  \label{SEa EJ4} \\
dx_{2}& =(-(x_{1}^{2}-1)x_{2}-\alpha x_{1})dt+\sigma x_{1}dw
\label{SEb EJ4}
\end{align}%
and observation equation
\begin{equation}
z_{t_{k}}=x_{1}(t_{k})+e_{t_{k}},\text{ for }k=0,1,..,M-1, \label{OE
EJ4}
\end{equation}%
where $\alpha =1$ and $\sigma ^{2}=1$ are the frequency mean value
and
variance, respectively. $\Pi =0.001$ is the observation noise variance, and $%
\mathbf{x}_{t_{0}/t_{0}}^{\intercal }=[1$ $1]$ and $\mathbf{Q}_{t_{0}/t_{0}}=%
\mathbf{x}_{t_{0}/t_{0}}\mathbf{x}_{t_{0}/t_{0}}^{\intercal }$ are
the initial filter values at $t_{0}=0$.

In these examples, autonomous or non autonomous, linear or
nonlinear, one or two dimensional SDEs with additive or
multiplicative noise are considered for the estimation of two or
three parameters. Note that, since the first two conditional moments
of the SDEs in Examples 1 and 2 have explicit expressions, the exact
innovation estimator (\ref{Innovation estimator}) can be computed.

These four state space models have previously been used in Jimenez
(2012b) to illustrate the convergence of the order-$\beta $ LL
filter by means of simulations. Tables with the errors between the
approximate moments and the exact ones as a function of $h$ were
given for the Examples 1 and 2. Tables with the estimated rate of
convergence were provided for the fours examples.

\subsection{Simulations with one-dimensional state equations}

For the first two examples, $100$ realizations of the state equation
solution were computed by means of the Euler (Kloeden \& Platen,
1999) or the Local Linearization scheme (Jimenez et al., 1999) for
the equation with multiplicative or additive noise, respectively.
For each example, the realizations where computed over the thin time
partition $ \{t_{0}+10^{-4}n:n=0,..,30\times 10^{4}\}$ to guarantee
a precise simulation of the stochastic solutions on the time
interval $[t_{0},t_{0}+30]$. Twelve
subsamples of each realization at the time instants $\{t\}_{M,T}=%
\{t_{k}=t_{0}+kT/M:$ $k=0,..,M-1\}$ were taken for evaluating the
corresponding observation equation with various values of $M$ and
$T$. In particular, the values $T=10,20,30$ and $M=T/\delta $ with
$\delta =1,0.1,0.01,0.001$ were used. In this way, twelve sets of
$100$ time series $ Z_{\delta ,T}^{i}=\{z_{t_{k}}^{i}:t_{k}\in
\{t\}_{M,T},$ $M=T/\delta \}$, with $i=1,..,100$, of $M$
observations $z_{t_{k}}^{i}$ each one were finally available for
both state space models to make inference. This will allow us to
explore and compare the performance of each estimator from
observations taken with different sampling periods $\delta $\ on
time intervals with distinct lengths $T$.

Figure 1 shows the histograms and the confidence limits for both,
the exact ( $\widehat{\alpha }_{\delta ,T}^{E}$) and the
conventional ($\widehat{\alpha } _{\delta ,T}$) innovation
estimators of $\alpha $ computed from the twelve sets of $100$ time
series $Z_{\delta ,T}^{i}$ available for the example 1. Figure 2
shows the same but, for the exact ($\widehat{\sigma }_{\delta
,T}^{E}$) and the conventional ($\widehat{\sigma }_{\delta ,T}$)
innovation estimators of $\sigma $. As it was expected, for the
samples $Z_{\delta ,T}^{i}$ with largest sampling periods, the
parameter estimation is distorted by the well-known lowpass filter
effect of signals sampling (see, e.g., Oppenheim \& Schafer, 2010).
This is the reason of the under estimation of the variance
$\widehat{\sigma }_{\delta ,T}^{E}$ from the samples $Z_{\delta
,T}^{i}$, with $\delta =1$ and $T=10,20,30$, when the parameter
$\alpha $ in the drift coefficient of (\ref{SE EJ1}) is better
estimated by $\widehat{\alpha }_{\delta ,T}^{E}$. Contrarily, from
these samples, the conventional innovation estimators
$\widehat{\alpha }_{\delta ,T}$ can not provided a good
approximation to $\alpha $, and so the whole unexplained component
of the drift coefficient of (\ref{SE EJ1}) included in the samples
is interpreted as noise by the conventional estimators. For this
reason, $\widehat{\sigma }_{\delta ,T}$ over estimates the value of
the parameter $\sigma $. Further, note that when the sampling period
$\delta $ decreases, the difference between the exact
($\widehat{\alpha }_{\delta ,T}^{E},\widehat{\sigma }_{\delta
,T}^{E}$) and the conventional ($\widehat{ \alpha }_{\delta
,T},\widehat{\sigma }_{\delta ,T}$) innovation estimators decreases,
as well as the bias of both estimators. This is also other expected
result. Here, the bias is estimated by the difference between the
parameter value and the estimator average, whereas the difference
between estimators refers to the histogram shape and confidence
limits.

For the data of (\ref{SE EJ1}) with largest sampling period $\delta
=1$, the order-$1$ innovation estimators ($\widehat{\alpha
}_{h,\delta ,T}^{u},\widehat{ \sigma }_{h,\delta ,T}^{u}$) and
($\widehat{\alpha }_{\cdot ,\delta ,T}, \widehat{\sigma }_{\cdot
,\delta ,T}$) on uniform $\left( \tau \right) _{h,T}^{u}=\{\tau
_{n}=t_{0}+nh:$ $n=0,..,T/h\}\supset \{t\}_{T/\delta ,T}$ and
adaptive $\left( \tau \right) _{\cdot ,T}\supset \{t\}_{T/\delta
,T}$ time discretizations, respectively, were computed with
$h=\delta /2,\delta /8,\delta /32$ and tolerances
$rtol_{\mathbf{y}}=rtol_{\mathbf{P}}=5\times 10^{-6}$ and
$atol_{\mathbf{y}}=5\times 10^{-9}$, $atol_{\mathbf{P}}=5\times
10^{-12}$. For each data $Z_{\delta ,T}^{i}$, with $ i=1,..,100$,
the errors
\begin{equation*}
\varepsilon _{i}(\alpha ,h,\delta ,T)=\left\vert \widehat{\alpha
}_{\delta ,T}^{E}-\widehat{\alpha }_{h,\delta ,T}^{u}\right\vert
\text{ and } \varepsilon _{i}(\sigma ,h,\delta ,T)=\left\vert
\widehat{\sigma }_{\delta ,T}^{E}-\widehat{\sigma }_{h,\delta
,T}^{u}\right\vert
\end{equation*}%
between the exact ($\widehat{\alpha }_{\delta
,T}^{E},\widehat{\sigma } _{\delta ,T}^{E}$) and the approximate
($\widehat{\alpha }_{h,\delta ,T}^{u}, \widehat{\sigma }_{h,\delta
,T}^{u}$) innovation estimators were computed. Average and standard
deviation of these $100$ errors were calculated for each set of
values $h,\delta ,T$ specified above, which are summarized in Table
I. Note as, for fixed $T$, the average of the errors decreases as
$h$ does it. This clearly illustrates the convergence of the
order-$1$ innovation estimators to the exact one stated in Theorem
\ref{Innovation convergence theorem} when $h$ goes to zero. In
addition, Figure 3 shows the histograms and the confidence limits
for the order-$1$ innovation estimators ($\widehat{ \alpha
}_{h,\delta ,T}^{u},\widehat{\sigma }_{h,\delta ,T}^{u}$) and ($
\widehat{\alpha }_{\cdot ,\delta ,T},\widehat{\sigma }_{\cdot
,\delta ,T}$) for each set of values $h,\delta ,T$. By comparing the
results of this figure with the corresponding in the previous ones,
the decreasing difference between the order-$1$ innovation
estimators ($\widehat{\alpha } _{h,\delta ,T}^{u},\widehat{\sigma
}_{h,\delta ,T}^{u}$) and the exact one ($ \widehat{\alpha }_{\delta
,T}^{E},\widehat{\sigma }_{\delta ,T}^{E}$) is observed as $h$
decreases, which is consistent with the convergence results of Table
I. These findings are more precisely summarized in Table II, which
shows the difference between the averages of the exact and the
approximate innovation estimators. Further, note the small
difference between the adaptive estimators ($\widehat{\alpha
}_{\cdot ,\delta ,T},\widehat{\sigma }_{\cdot ,\delta ,T}$) and the
exact ones ($\widehat{\alpha }_{\delta ,T}^{E}, \widehat{\sigma
}_{\delta ,T}^{E}$), which illustrates the usefulness of the
adaptive strategy for improving the innovation\ parameter estimation
for finite samples with large sampling periods. The number of
accepted and fail steps of the adaptive innovation estimators at
each $t_{k}\in \{t\}_{T/\delta ,T}$ are shown in Figure 4. Further,
note that the results of Table II illustrate the convergence
findings of Theorem \ref{Inn week convergence}.
\newline
\newline

\begin{tabular}{|l|c|c|c|c|}
\hline $\delta =1$ & $h=\delta $ & $h=\delta /2$ & $h=\delta /8$ &
$h=\delta /32$
\\ \hline
\begin{tabular}{ll}
& $T=10$ \\
$\alpha$ & $T=20$ \\
& $T=30$%
\end{tabular}
& \multicolumn{1}{|l|}{%
\begin{tabular}{l}
$7.5\pm 5.5\times 10^{-3}$ \\ \hline $7.7\pm 8.0\times 10^{-3}$ \\
\hline
$7.1\pm 5.2\times 10^{-3}$%
\end{tabular}%
} & \multicolumn{1}{|l|}{%
\begin{tabular}{l}
$1.8\pm 1.2\times 10^{-3}$ \\ \hline $1.7\pm 1.2\times 10^{-3}$ \\
\hline
$1.7\pm 1.2\times 10^{-3}$%
\end{tabular}%
} & \multicolumn{1}{|l|}{%
\begin{tabular}{l}
$2.9\pm 2.3\times 10^{-4}$ \\ \hline $2.7\pm 2.2\times 10^{-4}$ \\
\hline
$2.7\pm 2.2\times 10^{-4}$%
\end{tabular}%
} & \multicolumn{1}{|l|}{%
\begin{tabular}{l}
$6.8\pm 5.6\times 10^{-5}$ \\ \hline $6.4\pm 5.3\times 10^{-5}$ \\
\hline
$6.3\pm 5.3\times 10^{-5}$%
\end{tabular}%
} \\ \hline
\begin{tabular}{ll}
& $T=10$ \\
$\sigma $ & $T=20$ \\
& $T=30$%
\end{tabular}
& \multicolumn{1}{|l|}{%
\begin{tabular}{l}
$3.2\pm 1.9\times 10^{-2}$ \\ \hline $3.2\pm 1.9\times 10^{-2}$ \\
\hline
$3.2\pm 1.9\times 10^{-2}$%
\end{tabular}%
} & \multicolumn{1}{|l|}{%
\begin{tabular}{l}
$1.0\pm 0.6\times 10^{-2}$ \\ \hline $1.0\pm 0.6\times 10^{-2}$ \\
\hline
$1.0\pm 0.6\times 10^{-2}$%
\end{tabular}%
} & \multicolumn{1}{|l|}{%
\begin{tabular}{l}
$2.1\pm 1.1\times 10^{-3}$ \\ \hline $2.1\pm 1.1\times 10^{-3}$ \\
\hline
$2.1\pm 1.1\times 10^{-3}$%
\end{tabular}%
} & \multicolumn{1}{|l|}{%
\begin{tabular}{l}
$5.1\pm 2.6\times 10^{-4}$ \\ \hline $5.1\pm 2.6\times 10^{-4}$ \\
\hline
$5.1\pm 2.6\times 10^{-4}$%
\end{tabular}%
} \\ \hline
\end{tabular}

{\small Table I. Confidence limits for the error between the exact
and the approximate innovation estimators of the equation (\ref{SE
EJ1}). }$h=\delta
${\small , for the conventional; and }$h=\delta /2,\delta /8,\delta /32,$%
{\small \ for the order-$1$ on }$\left( \tau \right) _{h,T}^{u}${\small .%
\newline \newline
}

\begin{tabular}{|c|c|c|}
\hline $\delta =1$ & $\alpha$ & $\sigma $ \\ \hline
\begin{tabular}{c}
$h$ \\ \hline $\delta $ \\ \hline $\delta /2$ \\ \hline $\delta /8$
\\ \hline $\delta /32$ \\ \hline
$\cdot $%
\end{tabular}
&
\begin{tabular}{ccc}
$T=10$ & $T=20$ & $T=30$ \\ \hline \multicolumn{1}{r}{$-0.00403$} &
\multicolumn{1}{r}{$-0.00433$} & \multicolumn{1}{r}{$-0.00373$} \\
\hline \multicolumn{1}{r}{$-0.00083$} &
\multicolumn{1}{r}{$-0.00077$} & \multicolumn{1}{r}{$-0.00077$} \\
\hline \multicolumn{1}{r}{$-0.00004$} &
\multicolumn{1}{r}{$-0.00002$} & \multicolumn{1}{r}{$-0.00002$} \\
\hline
\multicolumn{1}{r}{$0$} & \multicolumn{1}{r}{$0.00001$} & \multicolumn{1}{r}{%
$0$} \\ \hline \multicolumn{1}{r}{$-0.00010$} &
\multicolumn{1}{r}{$-0.00014$} &
\multicolumn{1}{r}{$-0.00010$}%
\end{tabular}
&
\begin{tabular}{ccc}
$T=10$ & $T=20$ & $T=30$ \\ \hline \multicolumn{1}{r}{$-0.0321$} &
\multicolumn{1}{r}{$-0.0321$} & \multicolumn{1}{r}{$-0.0321$} \\
\hline \multicolumn{1}{r}{$-0.0107$} & \multicolumn{1}{r}{$-0.0106$}
& \multicolumn{1}{r}{$-0.0106$} \\ \hline
\multicolumn{1}{r}{$-0.0021$} & \multicolumn{1}{r}{$-0.0021$} &
\multicolumn{1}{r}{$-0.0021$} \\ \hline
\multicolumn{1}{r}{$-0.0005$} & \multicolumn{1}{r}{$-0.0005$} &
\multicolumn{1}{r}{$-0.0005$} \\ \hline
\multicolumn{1}{r}{$-0.0003$} & \multicolumn{1}{r}{$-0.0002$} &
\multicolumn{1}{r}{$-0.0003$}%
\end{tabular}
\\ \hline
\end{tabular}

{\small Table II: Difference between the averages of the exact and
the
approximate innovation estimators for the equation (\ref{SE EJ1}). }$%
h=\delta ${\small , for the conventional; }$h=\delta /2,\delta
/8,\delta
/32, ${\small \ for the order-$1$ on }$\left( \tau \right) _{h,T}^{u}$%
{\small ; and }$h=\cdot ,${\small \ for the adaptive order-$1$ on
}$\left( \tau \right) _{\cdot ,T}${\small.
\newline \newline }

\resizebox{\textwidth}{!}{
\begin{tabular}{|c|c|c|c|}
\hline $\delta =0.1$ & $\alpha$ & $\sigma $ & $\rho $ \\ \hline
\begin{tabular}{c}
$h$ \\ \hline $\delta $ \\ \hline $\delta /2$ \\ \hline $\delta /4$
\\ \hline $\delta /8$ \\ \hline
$\cdot $%
\end{tabular}
&
\begin{tabular}{ccc}
$T=10$ & $T=20$ & $T=30$ \\ \hline \multicolumn{1}{r}{$0.00039$} &
\multicolumn{1}{r}{$0.00031$} & \multicolumn{1}{r}{$0.00029$} \\
\hline \multicolumn{1}{r}{$0.00010$} & \multicolumn{1}{r}{$0.00007$}
& \multicolumn{1}{r}{$0.00007$} \\ \hline
\multicolumn{1}{r}{$0.00003$} & \multicolumn{1}{r}{$0.00002$} &
\multicolumn{1}{r}{$0.00001$} \\ \hline
\multicolumn{1}{r}{$0.00001$} & \multicolumn{1}{r}{$0$} & \multicolumn{1}{r}{%
$0$} \\ \hline \multicolumn{1}{r}{$-0.00005$} &
\multicolumn{1}{r}{$0.00002$} &
\multicolumn{1}{r}{$0.00009$}%
\end{tabular}
&
\begin{tabular}{ccc}
$T=10$ & $T=20$ & $T=30$ \\ \hline \multicolumn{1}{r}{$-0.0311$} &
\multicolumn{1}{r}{$-0.0291$} & \multicolumn{1}{r}{$-0.0287$} \\
\hline \multicolumn{1}{r}{$-0.0067$} & \multicolumn{1}{r}{$-0.0059$}
& \multicolumn{1}{r}{$-0.0060$} \\ \hline
\multicolumn{1}{r}{$-0.0013$} & \multicolumn{1}{r}{$-0.0012$} &
\multicolumn{1}{r}{$-0.0012$} \\ \hline
\multicolumn{1}{r}{$-0.0002$} & \multicolumn{1}{r}{$-0.0001$} &
\multicolumn{1}{r}{$-0.0001$} \\ \hline
\multicolumn{1}{r}{$0$} & \multicolumn{1}{r}{$-0.0023$} & \multicolumn{1}{r}{%
$-0.0106$}%
\end{tabular}
&
\begin{tabular}{ccc}
$T=10$ & $T=20$ & $T=30$ \\ \hline \multicolumn{1}{r}{$-2.13\times
10^{-4}$} & $2.4\times 10^{-5}$ & $3.3\times 10^{-5}$ \\ \hline
\multicolumn{1}{r}{$-0.54\times 10^{-4}$} & $1.4\times 10^{-5}$ &
$1.8\times 10^{-5}$ \\ \hline \multicolumn{1}{r}{$-0.05\times
10^{-4}$} & $1.0\times 10^{-5}$ & $1.1\times 10^{-5}$ \\ \hline
\multicolumn{1}{r}{$0.03\times 10^{-4}$} & $0.6\times 10^{-5}$ &
$0.6\times 10^{-5}$ \\ \hline \multicolumn{1}{r}{$2.14\times
10^{-4}$} & $2.8\times 10^{-5}$ & $9.4\times
10^{-5}$%
\end{tabular}
\\ \hline
\end{tabular}
}

{\small Table III: Difference between the averages of the exact and
the
approximate innovation estimators for the equation (\ref{SE EJ2}). }$%
h=\delta ${\small , for the conventional; }$h=\delta /2,\delta /4,\delta /8,$%
{\small \ for the order-$1$ on }$\left( \tau \right)
_{h,T}^{u}${\small ; and }$h=\cdot ,${\small \ for the adaptive
order-$1$ on }$\left( \tau \right) _{\cdot ,T}${\small.
\newline
\newline
}

\begin{tabular}{|l|c|c|c|c|}
\hline $\delta =0.1$ & $h=\delta $ & $h=\delta /2$ & $h=\delta /4$ &
$h=\delta /8$
\\ \hline
\begin{tabular}{ll}
& $T=10$ \\
$\alpha$ & $T=20$ \\
& $T=30$%
\end{tabular}
& \multicolumn{1}{|l|}{%
\begin{tabular}{l}
$5.2\pm 4.0\times 10^{-4}$ \\ \hline $5.5\pm 4.0\times 10^{-4}$ \\
\hline
$5.4\pm 3.9\times 10^{-4}$%
\end{tabular}%
} & \multicolumn{1}{|l|}{%
\begin{tabular}{l}
$1.1\pm 0.9\times 10^{-4}$ \\ \hline $1.2\pm 0.8\times 10^{-4}$ \\
\hline
$1.1\pm 0.8\times 10^{-4}$%
\end{tabular}%
} & \multicolumn{1}{|l|}{%
\begin{tabular}{l}
$2.7\pm 2.0\times 10^{-5}$ \\ \hline $2.6\pm 1.8\times 10^{-5}$ \\
\hline
$2.6\pm 1.8\times 10^{-5}$%
\end{tabular}%
} & \multicolumn{1}{|l|}{%
\begin{tabular}{l}
$7.4\pm 5.6\times 10^{-6}$ \\ \hline $7.1\pm 5.9\times 10^{-6}$ \\
\hline
$6.7\pm 5.4\times 10^{-6}$%
\end{tabular}%
} \\ \hline
\begin{tabular}{ll}
& $T=10$ \\
$\sigma $ & $T=20$ \\
& $T=30$%
\end{tabular}
& \multicolumn{1}{|l|}{%
\begin{tabular}{l}
$4.8\pm 3.5\times 10^{-2}$ \\ \hline $4.9\pm 3.5\times 10^{-2}$ \\
\hline
$4.9\pm 3.4\times 10^{-2}$%
\end{tabular}%
} & \multicolumn{1}{|l|}{%
\begin{tabular}{l}
$9.7\pm 6.6\times 10^{-3}$ \\ \hline $1.0\pm 0.7\times 10^{-2}$ \\
\hline
$1.0\pm 0.7\times 10^{-2}$%
\end{tabular}%
} & \multicolumn{1}{|l|}{%
\begin{tabular}{l}
$1.8\pm 1.4\times 10^{-3}$ \\ \hline $1.9\pm 1.5\times 10^{-3}$ \\
\hline
$1.9\pm 1.4\times 10^{-3}$%
\end{tabular}%
} & \multicolumn{1}{|l|}{%
\begin{tabular}{l}
$3.6\pm 3.6\times 10^{-4}$ \\ \hline $3.9\pm 4.7\times 10^{-4}$ \\
\hline
$3.7\pm 3.8\times 10^{-4}$%
\end{tabular}%
} \\ \hline
\begin{tabular}{ll}
& $T=10$ \\
$\rho $ & $T=20$ \\
& $T=30$%
\end{tabular}
& \multicolumn{1}{|l|}{%
\begin{tabular}{l}
$0.8\pm 1.2\times 10^{-3}$ \\ \hline $1.3\pm 1.2\times 10^{-4}$ \\
\hline
$7.5\pm 5.7\times 10^{-5}$%
\end{tabular}%
} & \multicolumn{1}{|l|}{%
\begin{tabular}{l}
$1.9\pm 2.6\times 10^{-4}$ \\ \hline $3.7\pm 3.0\times 10^{-5}$ \\
\hline
$2.5\pm 1.2\times 10^{-5}$%
\end{tabular}%
} & \multicolumn{1}{|l|}{%
\begin{tabular}{l}
$3.9\pm 5.0\times 10^{-5}$ \\ \hline $1.3\pm 0.5\times 10^{-5}$ \\
\hline
$1.1\pm 0.3\times 10^{-5}$%
\end{tabular}%
} & \multicolumn{1}{|l|}{%
\begin{tabular}{l}
$9.9\pm 8.9\times 10^{-6}$ \\ \hline $6.6\pm 2.0\times 10^{-6}$ \\
\hline
$6.0\pm 1.3\times 10^{-6}$%
\end{tabular}%
} \\ \hline
\end{tabular}

{\small Table IV: Confidence limits for the error between the exact
and the approximate innovation estimators of the equation (\ref{SE
EJ2}). }$h=\delta
${\small , for the conventional; and }$h=\delta /2,\delta /4,\delta /8,$%
{\small \ for the order-$1$ on }$\left( \tau \right)
_{h,T}^{u}${\small. \newline}

Figure 5 shows the histograms and the confidence limits for both,
the exact ( $\widehat{\alpha }_{\delta ,T}^{E}$) and the
conventional ($\widehat{\alpha } _{\delta ,T}$) innovation
estimators of $\alpha $ computed from the twelve sets of $100$ time
series $Z_{\delta ,T}^{i}$ available for the example 2. Figure 6
shows the same but, for the exact ($\widehat{\sigma }_{\delta
,T}^{E}$) and the conventional ($\widehat{\sigma }_{\delta ,T}$)
innovation estimators of $\sigma $, whereas Figure 7 does it for the
estimators $ \widehat{\rho }_{\delta ,T}^{E}$ and $\widehat{\rho
}_{\delta ,T}$ of $\rho $ . Note that, for this example, the
diffusion parameters $\sigma $ and $\rho $ can not be estimated from
the samples $Z_{\delta ,T}^{i}$ with the largest sampling period
$\delta =1$. From the other data with sampling period $\delta <1$,
the tree parameters can be estimated and, the bias of the exact and
the conventional innovation estimators is not so large as in the
previous example. Nevertheless, in this extreme situation of low
information in the data, the order-$1$ innovation estimators is able
to improve the accuracy of the parameter estimation when $h$
decreases. This is shown in Figure 8 for the samples $Z_{\delta
,T}^{i}$ with $\delta =0.1$ and $T=10,20,30$, and summarized in
Table III. The order-$1$ innovation estimators ($\widehat{ \alpha
}_{h,\delta ,T}^{u},\widehat{\sigma }_{h,\delta
,T}^{u},\widehat{\rho }_{h,\delta ,T}^{u}$) and ($\widehat{\alpha
}_{\cdot ,\delta ,T},\widehat{ \sigma }_{\cdot ,\delta
,T},\widehat{\rho }_{\cdot ,\delta ,T}$) are again computed on
uniform $\left( \tau \right)
_{h,T}^{u}\supset \{t\}_{T/\delta ,T}$ and adaptive $%
\left( \tau \right) _{\cdot ,T}\supset \{t\}_{T/\delta ,T}$ time
discretizations, respectively, with $T=10,20,30$, $h=\delta
/2,\delta /4,\delta /8$ and tolerances $rtol_{
\mathbf{y}}=rtol_{\mathbf{P}}=5\times 10^{-7}$ and $atol_{\mathbf{y}
}=5\times 10^{-10}$, $atol_{\mathbf{P}}=5\times 10^{-13}$. The
average of accepted and fail steps of the adaptive innovation
estimators at each $ t_{k}\in \{t\}_{T/\delta ,T}$ are shown in
Figure 4. Observe in Table III the higher difference between the
averages of the exact and the adaptive estimators for the three
parameters when $T=30$. The reason is that, for $ t_{k}>200$, the
mean and variance of the diffusion process (\ref{SE EJ2}) becomes
almost indistinguishable of zero in such a way that the signal noise
ratio is very small. This is so small that the adaptive strategy
computes inaccurate estimates of the integration errors for the
predictions and so less accurate estimators for the parameters of
the SDE (\ref{SE EJ2}). For this example, the convergence of the
order-$1$ innovation estimators to the exact one is shown in Table
IV, which gives the confidence limits for the error between theses
estimators for different values of $h$. Note that, Table III and IV
illustrate the convergence results of Theorems \ref{Inn week
convergence} and \ref{Innovation convergence theorem}, respectively.

\subsection{Simulations with two-dimensional state equations}

For the examples 3 and 4, $100$ realizations of the state equation
solution were similarly computed by means of the Local Linearization
and the Euler scheme, respectively. For each example, the
realizations where computed over the thin time partition
$\{t_{0}+10^{-4}n:n=0,..,30\times 10^{4}\}$ for guarantee a precise
simulation of the stochastic solutions on the time interval
$[t_{0},t_{0}+30]$. Two subsamples of each realization at the time
instants $\{t\}_{M,T}=\{t_{k}=t_{0}+kT/M:$ $k=0,..,M-1\}$ were taken
for evaluating the corresponding observation equation, with $T=30$
and two values of $M$. In particular, $M=30,300$ were used, which
correspond to the
sampling periods $\delta =1,0.1$. In this way, two sets of $100$ time series $%
Z_{\delta ,T}^{i}=\{z_{t_{k}}^{i}:t_{k}\in \{t\}_{M,T},$ $M=T/\delta
\}$, with $i=1,..,100$, of $M$ observations $z_{t_{k}}^{i}$ each one
were available for both state space models with the two values of
$(\delta ,T)$ mentioned above.

For both examples, the order-$1$ innovation estimators
($\widehat{\alpha } _{h,\delta ,T}^{u},\widehat{\sigma }_{h,\delta
,T}^{u}$) and ($\widehat{ \alpha }_{\cdot ,\delta
,T},\widehat{\sigma }_{\cdot ,\delta ,T}$) on uniform $\left( \tau
\right) _{h,T}^{u}\supset \{t\}_{T/\delta ,T}$ and adaptive $\left(
\tau \right) _{\cdot ,T}\supset \{t\}_{T/\delta ,T}$ time
discretizations, respectively, were computed from the two sets of
$100$ data $Z_{\delta ,T}^{i}$ with $T=30$ and $\delta =1,0.1$. The
values of $h$ were set as $h=\delta ,\delta /16,\delta /64$ for the
example 3, and as $h=\delta ,\delta /8,\delta /32$ for the example
4. The tolerances for the adaptive estimators were set as in the
first example. Figures 9 and 11 show the histograms and the
confidence limits for the estimators ($\widehat{ \alpha }_{h,\delta
,T}^{u},\widehat{\sigma }_{h,\delta ,T}^{u}$) and ($ \widehat{\alpha
}_{\cdot ,\delta ,T},\widehat{\sigma }_{\cdot ,\delta ,T}$)
corresponding to each example. For the two examples, the difference
between the order-$1$ innovation estimator ($\widehat{\alpha
}_{h,\delta ,T}^{u}, \widehat{\sigma }_{h,\delta ,T}^{u}$) and the
adaptive one ($\widehat{\alpha }_{\cdot ,\delta ,T},\widehat{\sigma
}_{\cdot ,\delta ,T}$) decreases when $h$ does it. This is,
according Theorem \ref{Innovation convergence theorem}, an expected
result by assuming that the difference between the adaptive and the
exact innovation estimators is negligible for $\left( \tau \right)
_{\cdot ,T}$ thin enough. In addition, Table V and VI show the bias
of the approximate innovation estimators for these examples. Observe
as the adaptive ($\widehat{\alpha }_{\cdot ,\delta
,T},\widehat{\sigma }_{\cdot ,\delta ,T}$) and the order-$1$
innovation estimator ($\widehat{\alpha } _{h,\delta
,T}^{u},\widehat{\sigma }_{h,\delta ,T}^{u}$) with $h<\delta $
provide much less biased estimation of the parameters $(\alpha
,\sigma )$ than the conventional innovation estimator
($\widehat{\alpha }_{\delta ,\delta ,T}^{u},\widehat{\sigma
}_{\delta ,\delta ,T}^{u}$), which is in fact unable to identify the
parameters of the examples. Clearly, this illustrates the usefulness
of the order-$1$ innovation estimator and its adaptive
implementation. However, as it is shown in Table V for $\delta =0.1$
, no always the adaptive estimator ($\widehat{\alpha }_{\cdot
,\delta ,T}, \widehat{\sigma }_{\cdot ,\delta ,T}$) is less unbiased
than the order-$1$ innovation estimator ($\widehat{\alpha
}_{h,\delta ,T}^{u},\widehat{\sigma } _{h,\delta ,T}^{u}$) for some
$h<\delta $. This can happen for one of following reasons: 1) the
bias of the exact innovation\ estimator when the adaptive estimator
is close enough to it, or 2) an insufficient number of accepted
steps of the adaptive estimator for a given tolerance. In our case,
since ($\widehat{\alpha }_{h,\delta ,T}^{u},\widehat{\sigma
}_{h,\delta ,T}^{u}$) converges to ($\widehat{\alpha }_{\cdot
,\delta ,T},\widehat{ \sigma }_{\cdot ,\delta ,T}$) as $h$ decreases
(Figure 9 with $\delta =0.1$) and the average of accepted steps of
the adaptive estimators is acceptable (Figure 10 with $\delta
=0.1$), the first explanation is more suitable. Figures 10 and 12
show the average of accepted and fail steps of the adaptive
estimators at each $t_{k}\in \{t\}_{T/\delta ,T}$ for each example.
Note how the average of accepted steps corresponding to the
estimators from samples with $\delta =0.1$ is ten time lower than
that of the estimators from samples with $\delta =1$, which is an
expected result as well.\newline \newline

\begin{tabular}{|c|c|c|}
\hline $T=30$ & $\alpha$ & $\sigma $ \\ \hline
\begin{tabular}{c}
$h$ \\ \hline $\delta $ \\ \hline $\delta /16$ \\ \hline $\delta
/64$ \\ \hline
$\cdot $%
\end{tabular}
&
\begin{tabular}{cc}
$\delta =1$ & $\delta =0.1$ \\ \hline \multicolumn{1}{r}{$-0.4588$}
& \multicolumn{1}{r}{$-0.1403$} \\ \hline
\multicolumn{1}{r}{$-0.1244$} & \multicolumn{1}{r}{$-0.0026$} \\
\hline \multicolumn{1}{r}{$-0.0336$} & \multicolumn{1}{r}{$0.0041$}
\\ \hline
\multicolumn{1}{r}{$-0.0108$} & \multicolumn{1}{r}{$0.0064$}%
\end{tabular}
&
\begin{tabular}{cc}
$\delta =1$ & $\delta =0.1$ \\ \hline $-0.7240$ &
\multicolumn{1}{r}{$-0.0140$} \\ \hline $-0.2180$ &
\multicolumn{1}{r}{$0.0103$} \\ \hline $-0.1883$ &
\multicolumn{1}{r}{$0.0104$} \\ \hline
$-0.1803$ & \multicolumn{1}{r}{$0.0099$}%
\end{tabular}
\\ \hline
\end{tabular}

{\small Table V: Bias of the approximate innovation estimators for
the equation (\ref{SEa EJ3})-(\ref{SEb EJ3}). }$h=\delta ${\small ,
for the
conventional; }$h=\delta /16,\delta /64,${\small \ for the order-$1$ on }$%
\left( \tau \right) _{h,T}^{u}${\small ; and }$h=\cdot ,${\small \
for the adaptive order-$1$ on }$\left( \tau \right) _{\cdot
,T}${\small .\newline \newline \newline }

\begin{tabular}{|c|c|c|}
\hline $T=30$ & $\alpha$ & $\sigma $ \\ \hline
\begin{tabular}{c}
$h$ \\ \hline $\delta $ \\ \hline $\delta /8$ \\ \hline $\delta /32$
\\ \hline
$\cdot $%
\end{tabular}
&
\begin{tabular}{cc}
$\delta =1$ & $\delta =0.1$ \\ \hline \multicolumn{1}{r}{$-0.8511$}
& \multicolumn{1}{r}{$-0.2740$} \\ \hline
\multicolumn{1}{r}{$-0.2488$} & \multicolumn{1}{r}{$-0.0662$} \\
\hline \multicolumn{1}{r}{$-0.1887$} & \multicolumn{1}{r}{$-0.0472$}
\\ \hline
\multicolumn{1}{r}{$-0.1550$} & \multicolumn{1}{r}{$-0.0373$}%
\end{tabular}
&
\begin{tabular}{cc}
$\delta =1$ & $\delta =0.1$ \\ \hline $-1.0347$ &
\multicolumn{1}{r}{$-0.0239$} \\ \hline $-0.3107$ &
\multicolumn{1}{r}{$0.0071$} \\ \hline $-0.2857$ &
\multicolumn{1}{r}{$0.0072$} \\ \hline
$-0.2805$ & \multicolumn{1}{r}{$0.0084$}%
\end{tabular}
\\ \hline
\end{tabular}

{\small Table VI: Bias of the approximate innovation estimators for
the equation (\ref{SEa EJ4})-(\ref{SEb EJ4}). }$h=\delta ${\small ,
for the
conventional; }$h=\delta /8,\delta /32,${\small \ for the order-$1$ on }$%
\left( \tau \right) _{h,T}^{u}${\small ; and }$h=\cdot ,${\small \
for the adaptive order-$1$ on }$\left( \tau \right) _{\cdot
,T}${\small. \newline}

\subsection{Simulations with noise free observation equations}

In section \ref{Section on complete observations}, the connection
among the innovation and quasi-maximum likelihood estimators was
early mentioned for the identification of models with noise free
complete observations. In this
situation, it is easy to verify that the LL-based innovation estimator (\ref%
{LL-based innovation estimator}) reduces to the LL-based
quasi-maximum likelihood estimator introduced in Jimenez (2012b). In
that paper, the state equations of the four models considered in
Section \ref{Section Test models} were also used as test examples in
simulations. The reader interested in this identification problem is
encouraged to consider these simulations.

\section{Conclusions}

An alternative approximation to the innovation method was introduced
for the parameter estimation of diffusion processes given a time
series of partial and noisy observations. This is based on a
convergent approximation to the first two conditional moments of the
innovation process through approximate continuous-discrete filters
of minimum variance. For finite samples, the convergence of the
approximate innovation estimators to the exact one was proved when
the error between the approximate and the exact linear minimum
variance filters decreases. It was also demonstrated that, for an
increasing number of observations, the approximate estimators are
asymptotically normal distributed and their bias decreases when the
above mentioned error does it. As particular instance, the
order-$\beta $ innovation estimators based on Local Linearization
filters were proposed. For them, practical algorithms were also
provided and their performance in simulation illustrated with
various examples. Simulations shown that: 1) with thin time
discretizations between observations, the order-$1$ innovation
estimator provides satisfactory approximations to the exact
innovation estimator; 2) the convergence of the order-$1$ innovation
estimator to the exact one when the maximum stepsize of the time
discretization between observations decreases; 3) with respect to
the conventional innovation estimator, the order-$1$ innovation
estimator gives much better approximation to the exact innovation
estimator, and has less bias and higher efficiency; 4) with an
adequate tolerance, the adaptive order-$1$ innovation estimator
provides an automatic, suitable and computational efficient
approximation to the exact innovation estimator; and 5) the
effectiveness of the order-$1$ innovation estimator for the
identification of SDEs from a reduced number of partial and noisy
observations distant in time. Further note that new estimators can
also be easily applied to a variety of practical problems with
sequential random measurements or with multiple missing data.
\newline

\textbf{Acknowledgement. }The numerical simulations of this paper
were concluded on July 2012 within the framework of the
Associateship Scheme of the Abdus Salam International Centre for
Theoretical Physics (ICTP), Trieste, Italy. The author thanks to the
ICTP for the partial support to this work.

\section{Appendix}

According\ to Jimenez (2012b), given the filters values $\mathbf{y}%
_{t_{k}/t_{k}}$ and $\mathbf{P}_{t_{k}/t_{k}}$, the predictions $\mathbf{y}%
_{t/t_{k}}$ and $\mathbf{P}_{t/t_{k}}$ of the order-$\beta $ LL
filter are
computed by the recursive formulas%
\begin{equation}
\mathbf{y}_{t/t_{k}}=\mathbf{y}_{\tau _{n_{t}}/t_{k}}+\mathbf{L}_{2}e^{%
\mathbf{M}(\tau _{n_{t}})(t-\tau _{n_{t}})}\mathbf{u}_{\tau
_{n_{t}},t_{k}} \label{ALLF8}
\end{equation}%
and
\begin{equation}
vec(\mathbf{P}_{t/t_{k}})=\mathbf{L}_{1}e^{\mathbf{M}(\tau
_{n_{t}})(t-\tau _{n_{t}})}\mathbf{u}_{\tau _{n_{t}},t_{k}}
\label{ALLF9}
\end{equation}%
for all $t\in (t_{k},t_{k+1}]$ and $t_{k},t_{k+1}\in \{t\}_{M}$,
where
\begin{equation*}
n_{t}=\max \{n=0,1,\ldots :\tau _{n}\leq t\text{ and }\tau _{n}\in
\left( \tau \right) _{h}\},
\end{equation*}%
and the vector $\mathbf{u}_{\tau ,t_{k}}$ and the matrices
$\mathbf{M}(\tau ) $, $\mathbf{L}_{1}$, $\mathbf{L}_{2}$\ are
defined as
\begin{equation*}
\mathbf{M}(\tau )=\left[
\begin{array}{cccccc}
\mathcal{A}(\tau ) & \mathcal{B}_{5}(\tau ) & \mathcal{B}_{4}(\tau )
&
\mathcal{B}_{3}(\tau ) & \mathcal{B}_{2}(\tau ) & \mathcal{B}_{1}(\tau ) \\
\mathbf{0} & \mathbf{C}(\tau ) & \mathbf{I}_{d+2} & \mathbf{0} &
\mathbf{0}
& \mathbf{0} \\
\mathbf{0} & \mathbf{0} & \mathbf{C}(\tau ) & \mathbf{0} &
\mathbf{0} &
\mathbf{0} \\
\mathbf{0} & \mathbf{0} & \mathbf{0} & 0 & 2 & 0 \\
\mathbf{0} & \mathbf{0} & \mathbf{0} & 0 & 0 & 1 \\
\mathbf{0} & \mathbf{0} & \mathbf{0} & 0 & 0 & 0%
\end{array}%
\right] \text{, \ \ }\mathbf{u}_{\tau ,t_{k}}=\left[
\begin{array}{c}
vec(\mathbf{P}_{\tau /t_{k}}) \\
\mathbf{0} \\
\mathbf{r} \\
0 \\
0 \\
1%
\end{array}%
\right] \in
\mathbb{R}
^{(d^{2}+2d+7)}
\end{equation*}%
and%
\begin{equation*}
\mathbf{L}_{1}=\left[
\begin{array}{cc}
\mathbf{I}_{d^{2}} & \mathbf{0}_{d^{2}\times (2d+7)}%
\end{array}%
\right] \text{, \ \ \ \ \ \ \ \ \ }\mathbf{L}_{2}=\left[
\begin{array}{ccc}
\mathbf{0}_{d\times (d^{2}+d+2)} & \mathbf{I}_{d} & \mathbf{0}_{d\times 5}%
\end{array}%
\right]
\end{equation*}%
in terms of the matrices and vectors
\begin{equation*}
\mathcal{A}(\tau )=\mathbf{A}(\tau )\mathbf{\oplus A}(\tau
)+\sum\limits_{i=1}^{m}\mathbf{B}_{i}(\tau )\mathbf{\otimes B}%
_{i}^{\intercal }(\tau ),
\end{equation*}%
\begin{equation*}
\mathbf{C(}\tau )=\left[
\begin{array}{ccc}
\mathbf{A}(\tau ) & \mathbf{a}_{1}(\tau ) & \mathbf{A}(\tau )\mathbf{y}%
_{\tau ,t_{k}}+\mathbf{a}_{0}(\tau ) \\
0 & 0 & 1 \\
0 & 0 & 0%
\end{array}%
\right] \in \mathbb{R}^{(d+2)\times (d+2)},
\end{equation*}%
\begin{equation*}
\mathbf{r}^{\intercal }=\left[
\begin{array}{ll}
\mathbf{0}_{1\times (d+1)} & 1%
\end{array}%
\right]
\end{equation*}%
$\mathcal{B}_{1}(\tau )=vec(\mathbf{\beta }_{1}(\tau ))+\beta _{4}(\tau )%
\mathbf{y}_{\tau /t_{k}}$, $\mathcal{B}_{2}(\tau )=vec(\mathbf{\beta }%
_{2}(\tau ))+\mathbf{\beta }_{5}(\tau )\mathbf{y}_{\tau /t_{k}}$, $\mathcal{B%
}_{3}(\tau )=vec(\mathbf{\beta }_{3}(\tau ))$, $\mathcal{B}_{4}(\tau )=%
\mathbf{\beta }_{4}(\tau )\mathbf{L}$ and $\mathcal{B}_{5}(\tau )=\mathbf{%
\beta }_{5}(\tau )\mathbf{L}$ with
\begin{align*}
\mathbf{\beta }_{1}(\tau )& =\sum\limits_{i=1}^{m}\mathbf{b}_{i,0}(\tau )%
\mathbf{b}_{i,0}^{\intercal }(\tau ) \\
\mathbf{\beta }_{2}(\tau )& =\sum\limits_{i=1}^{m}\mathbf{b}_{i,0}(\tau )%
\mathbf{b}_{i,1}^{\intercal }(\tau )+\mathbf{b}_{i,1}(\tau )\mathbf{b}%
_{i,0}^{\intercal }(\tau ) \\
\mathbf{\beta }_{3}(\tau )& =\sum\limits_{i=1}^{m}\mathbf{b}_{i,1}(\tau )%
\mathbf{b}_{i,1}^{\intercal }(\tau ) \\
\mathbf{\beta }_{4}(\tau )& =\mathbf{a}_{0}(\tau )\oplus
\mathbf{a}_{0}(\tau
)+\sum\limits_{i=1}^{m}\mathbf{b}_{i,0}(\tau )\otimes \mathbf{B}_{i}(\tau )+%
\mathbf{B}_{i}(\tau )\otimes \mathbf{b}_{i,0}(\tau ) \\
\mathbf{\beta }_{5}(\tau )& =\mathbf{a}_{1}(\tau )\oplus
\mathbf{a}_{1}(\tau
)+\sum\limits_{i=1}^{m}\mathbf{b}_{i,1}(\tau )\otimes \mathbf{B}_{i}(\tau )+%
\mathbf{B}_{i}(\tau )\otimes \mathbf{b}_{i,1}(\tau ),
\end{align*}%
$\mathbf{L}=\left[
\begin{array}{ll}
\mathbf{I}_{d} & \mathbf{0}_{d\times 2}%
\end{array}%
\right] $, and the $d$-dimensional identity matrix $\mathbf{I}_{d}$.
Here,
\begin{equation*}
\mathbf{A}(\tau )=\frac{\partial \mathbf{f}(\tau ,\mathbf{y}_{\tau /t_{k}})}{%
\partial \mathbf{y}}\text{ \ \ \ \ \ \ and \ \ \ \ \ \ }\mathbf{B}_{i}(\tau
)=\frac{\partial \mathbf{g}_{i}(\tau ,\mathbf{y}_{\tau
/t_{k}})}{\partial \mathbf{y}}
\end{equation*}%
are matrices, and the vectors $\mathbf{a}_{0}(\tau _{n_{t}})$, $\mathbf{a}%
_{1}(\tau _{n_{t}})$, $\mathbf{b}_{i,0}(\tau _{n_{t}})$ and $\mathbf{b}%
_{i,1}(\tau _{n_{t}})$ satisfy the expressions%
\begin{equation*}
\mathbf{a}^{\beta }(t;\tau _{n_{t}})=\mathbf{a}_{0}(\tau _{n_{t}})+\mathbf{a}%
_{1}(\tau _{n_{t}})(t-\tau _{n_{t}})\text{ \ \ \ \ and \ \ \ \ }\mathbf{b}%
_{i}^{\beta }(t;\tau _{n_{t}})=\mathbf{b}_{i,0}(\tau _{n_{t}})+\mathbf{b}%
_{i,1}(\tau _{n_{t}})(t-\tau _{n_{t}})
\end{equation*}%
for all $t\in \lbrack t_{k},t_{k+1}]$ and $\tau _{n_{t}}\in \left(
\tau \right) _{h}$, where
\begin{equation*}
\mathbf{a}^{\beta }(t;\tau )=\left\{
\begin{array}{ll}
\mathbf{f}(\tau ,\mathbf{y}_{\tau /t_{k}})-\frac{\partial \mathbf{f}(\tau ,%
\mathbf{y}_{\tau /t_{k}})}{\partial \mathbf{y}}\mathbf{y}_{\tau /t_{k}}+%
\frac{\partial \mathbf{f}(\tau ,\mathbf{y}_{\tau /t_{k}})}{\partial \tau }%
(t-\tau ) & \text{for }\mathbb{\beta }=1 \\
\mathbf{a}^{1}(t;\tau )+\frac{1}{2}\sum\limits_{j,l=1}^{d}[\mathbf{G}(\tau ,%
\mathbf{y}_{\tau /t_{k}})\mathbf{G}^{\intercal }(\tau
,\mathbf{y}_{\tau /t_{k}})]^{j,l}\text{ }\frac{\partial
^{2}\mathbf{f}(\tau ,\mathbf{y}_{\tau
/t_{k}})}{\partial \mathbf{y}^{j}\partial \mathbf{y}^{l}}(t-\tau ) & \text{%
for }\mathbb{\beta }=2%
\end{array}%
\right.
\end{equation*}%
and%
\begin{equation*}
\mathbf{b}_{i}^{\beta }(t;\tau )=\left\{
\begin{array}{ll}
\mathbf{g}_{i}(\tau ,\mathbf{y}(\tau ))-\frac{\partial \mathbf{g}_{i}(\tau ,%
\mathbf{y}_{\tau /t_{k}})}{\partial \mathbf{y}}\mathbf{y}_{\tau /t_{k}}+%
\frac{\partial \mathbf{g}_{i}(\tau ,\mathbf{y}_{\tau
/t_{k}})}{\partial \tau
}(t-\tau ) & \text{for }\mathbb{\beta }=1 \\
\mathbf{b}_{i}^{1}(t;\tau )+\frac{1}{2}\sum\limits_{j,l=1}^{d}[\mathbf{G}%
(\tau ,\mathbf{y}_{\tau /t_{k}})\mathbf{G}^{\intercal }(\tau \mathbf{,y}%
_{\tau /t_{k}})]^{j,l}\text{ }\frac{\partial ^{2}\mathbf{g}_{i}(\tau ,%
\mathbf{y}(\tau ))}{\partial \mathbf{y}^{j}\partial
\mathbf{y}^{l}}(t-\tau )
& \text{for }\mathbb{\beta }=2%
\end{array}%
\right.
\end{equation*}%
are functions associated to the order-$\beta $ Ito-Taylor expansions
for the drift and diffusion coefficients of (\ref{SS1}) in the
neighborhood of $(\tau \mathbf{,y}_{\tau /t_{k}})$, respectively,
and $\mathbf{G=[g}_{1},\ldots ,\mathbf{g}_{m}]$ is an $d\times m$
matrix function. The symbols $vec$, $\oplus $ and $\otimes $ denote
the vectorization operator, the Kronecker sum and product,
respectively.

From computational viewpoint, each evaluation of\ the formulas (\ref{ALLF8}%
)-(\ref{ALLF9}) at $\tau _{n}$ requires the computation of just one
exponential matrix whose matrix depends of the drift and diffusion
coefficients of (\ref{SS1}) at $(\tau _{n-1},\mathbf{y}_{\tau _{n-1}/t_{k}})$%
. This exponential matrix can the efficiently computed through the
well known Pad\'{e} method (Moler \& Van Loan, 2003) or,
alternatively, by means of the Krylov subspace method (Moler \& Van
Loan, 2003) in the case of high dimensional SDEs. Even more, low
order Pad\'{e} and Krylov methods as suggested in Jimenez \& de la
Cruz (2012) can be used as well for reducing the computation cost,
but preserving the order-$\beta $ of the approximate moments.
Alternatively, simplified formulas for the moments can be used when
the equation to be estimate is autonomous or has additive noise (see
Jimenez, 2012a). All this makes simple and efficient the evaluation
of the
approximate moments $\mathbf{y}_{t_{k+1}/t_{k}}$ and $\mathbf{V}%
_{t_{k+1}/t_{k}}$ required by the innovation estimator
(\ref{LL-based innovation estimator}).

\section{References}

\textbf{Bollerslev T. and Wooldridge J.M.} (1992) Quasi-maximun
likelihood estimation and inference in dynamic models with
time-varying covariances. Econom. Rev., 11, 143-172.

\textbf{Calderon C.P., Harris N.C. and Kiang C.H. and Cox D.D.}
(2009). Analyzing single-molecule manipulation experiments, Journal
of Molecular Recognition, 22, 356-362.

\textbf{Chiarella C., Hung H. and To T.D.} (2009). The volatility
structure of the fixed income market under the HJM framework: A
nonlinear filtering approach. Comput. Stat. Data Anal., 53,
2075-2088.

\textbf{Gitterman M.}{\normalsize , The noisy oscillator, World
Scientific, 2005. }

\textbf{Jimenez J.C.} (2012a) Simplified formulas for the mean and
variance of linear stochastic differential equations. Submitted.
http://arxiv.org/abs/1207.5067.

\textbf{Jimenez J.C.} (2012b) Approximate linear minimum variance
filters for continuous-discrete state space models: convergence and
practical algorithms. Submitted. http://arxiv.org/abs/1207.6023.

\textbf{Jimenez J.C.} (2012c) Approximate discrete-time schemes for
the estimation of diffusion processes from complete observations.
Submitted. http://arxiv.org/abs/1212.1788.

\textbf{Jimenez J.C., Biscay R. and Ozaki T.} (2006) Inference
methods for discretely observed continuous-time stochastic
volatility models: A commented overview, Asia-Pacific Financial
Markets, 12, 109-141.

\textbf{Jimenez J.C. and de la Cruz H.} (2012) Convergence rate of
strong Local Linearization schemes for stochastic differential
equations with additive noise, BIT, 52, 357-382.

\textbf{Jimenez J.C., Shoji I. and Ozaki T.} (1999) Simulation of
stochastic differential equations through the Local Linearization
method. A comparative study, J. Statist. Physics, 94, 587-602.

\textbf{Jimenez, J.C. and Ozaki, T.} (2006) An approximate
innovation method for the estimation of diffusion processes from
discrete data, J. Time Series Analysis, 27, 77-97.

\textbf{Kloeden P.E. and Platen E}. (1999) Numerical Solution of
Stochastic Differential Equations, Springer-Verlag, Berlin, Third
Edition.

\textbf{Ljung L. and Caines P.E.} (1979) Asymptotic normality of
prediction error estimators for approximate system models,
Stochastics, 3, 29-46.

\textbf{Moler C. and Van Loan C.} (2003) Nineteen dubious ways to
compute the exponential of a matrix, SIAM Review, 45, 3-49.

\textbf{Nielsen J.N. and Madsen H.} (2001) Applying the EKF to
stochastic differential equations with level effects, Automatica,
37, 107-112.

\textbf{Nielsen J.N., Madsen H. and Young, P. C.} (2000a) Parameter
estimation in stochastic differential equations: an overview. Annual
Review of Control, 24, 83-94.

\textbf{Nielsen J.N., Vestergaard M. and Madsen H.} (2000b)
Estimation in continuous-time stochastic volatility models using
nonlinear filters, Int. J. Theor. Appl. Finance, 3, 279-308.

\textbf{Nolsoe K., Nielsen J.N. and Madsen H.} (2000)
Prediction-based estimating function for diffusion processes with
measurement noise. Technical Reports 2000, No.10, Informatics and
Mathematical Modelling, Technical University of Denmark.

\textbf{Oppenheim A.V. and Schafer R.W.} (2010) Discrete-Time Signal
Processing, Prentice Hall, Third Edition.

\textbf{Ozaki T.} (1994) The local linearization filter with
application to nonlinear system identification. In Bozdogan H. (ed.)
Proceedings of the first US/Japan Conference on the Frontiers of
Statistical Modeling: An Informational Approach, 217-240. Kluwer
Academic Publishers.

\textbf{Ozaki T. and Iino M.} (2001) An innovation approach to
non-Gaussian time series analysis, J. Appl. Prob., 38A, 78-92.

\textbf{Peng H., Ozaki T. and Jimenez J.C.} (2002) Modeling and
control for foreign exchange based on a continuous time stochastic
microstructure model, in Proceedings of the 41st IEEE Conference on
Decision and Control, Las Vegas, Nevada USA, December 2002,
4440-4445.

\textbf{Riera J.J., Watanabe J., Iwata K., Miura N., Aubert E.,
Ozaki T. and Kawashima R.} (2004) A state-space model of the
hemodynamic approach: nonlinear filtering of BOLD signals.
Neuroimage, 21, 547-567.

\textbf{Schweppe F.} (1965) Evaluation of likelihood function for
Gaussian signals, IEEE Trans. Inf. Theory, 11, 61-70.

\textbf{Shoji I.} (1998) A comparative study of maximum likelihood
estimators for nonlinear dynamical systems, Int. J. Control, 71,
391-404.

\textbf{Singer H.} (2002) Parameter estimation of nonlinear
stochastic differential equations: Simulated maximum likelihood
versus extended Kalman filter and Ito-Taylor expansion, J. Comput.
Graphical Statist.11, 972-995.

\textbf{Valdes P.A., Jimenez J.C., Riera J., Biscay R. and Ozaki T.}
(1999) Nonlinear EEG analysis based on a neural mass model. Biol.
Cyb., 81, 415-424.

\newpage

\begin{figure}
\centering
 $\begin{array}{c}
  \includegraphics[width=5in]{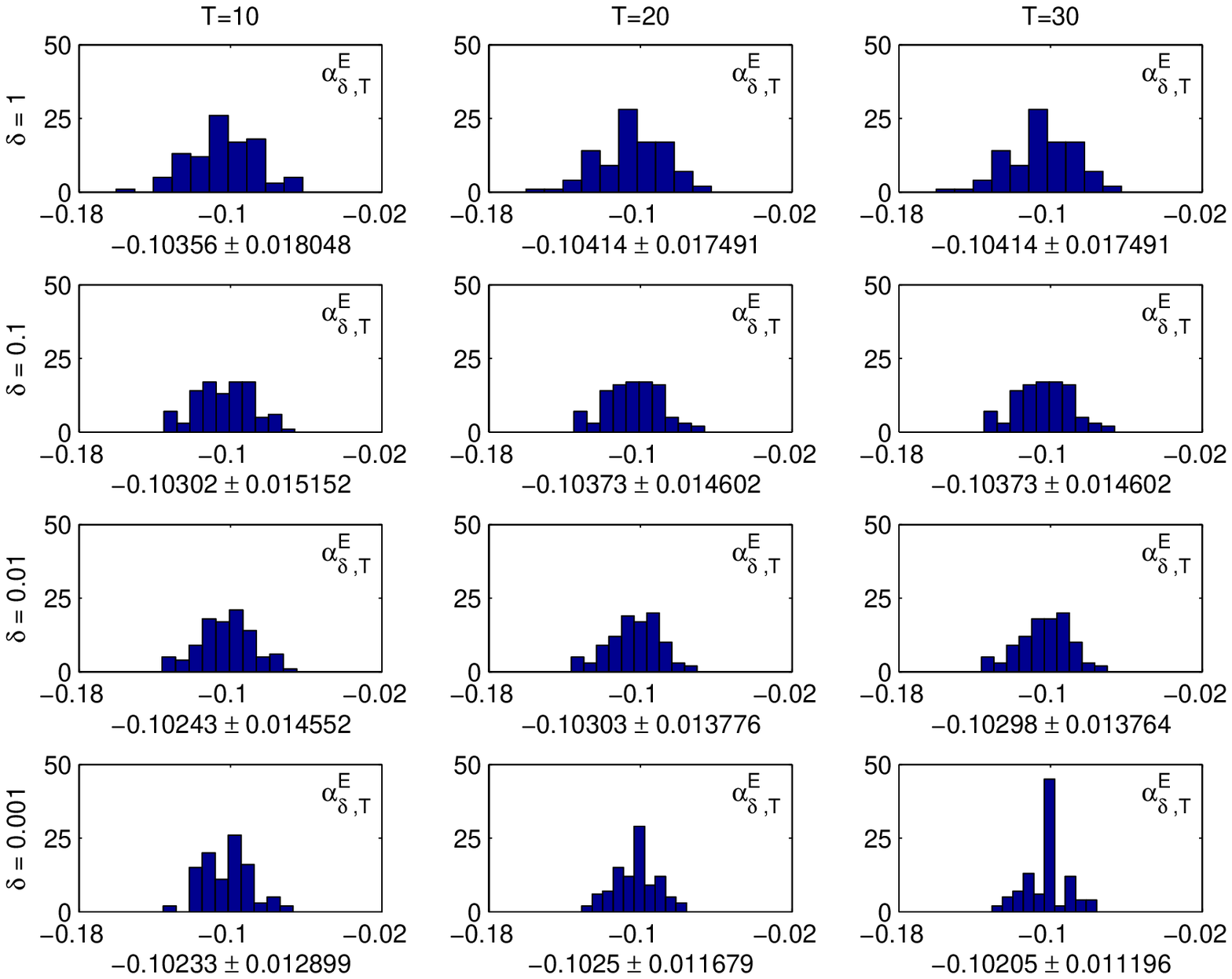} \\
  \includegraphics[width=5in]{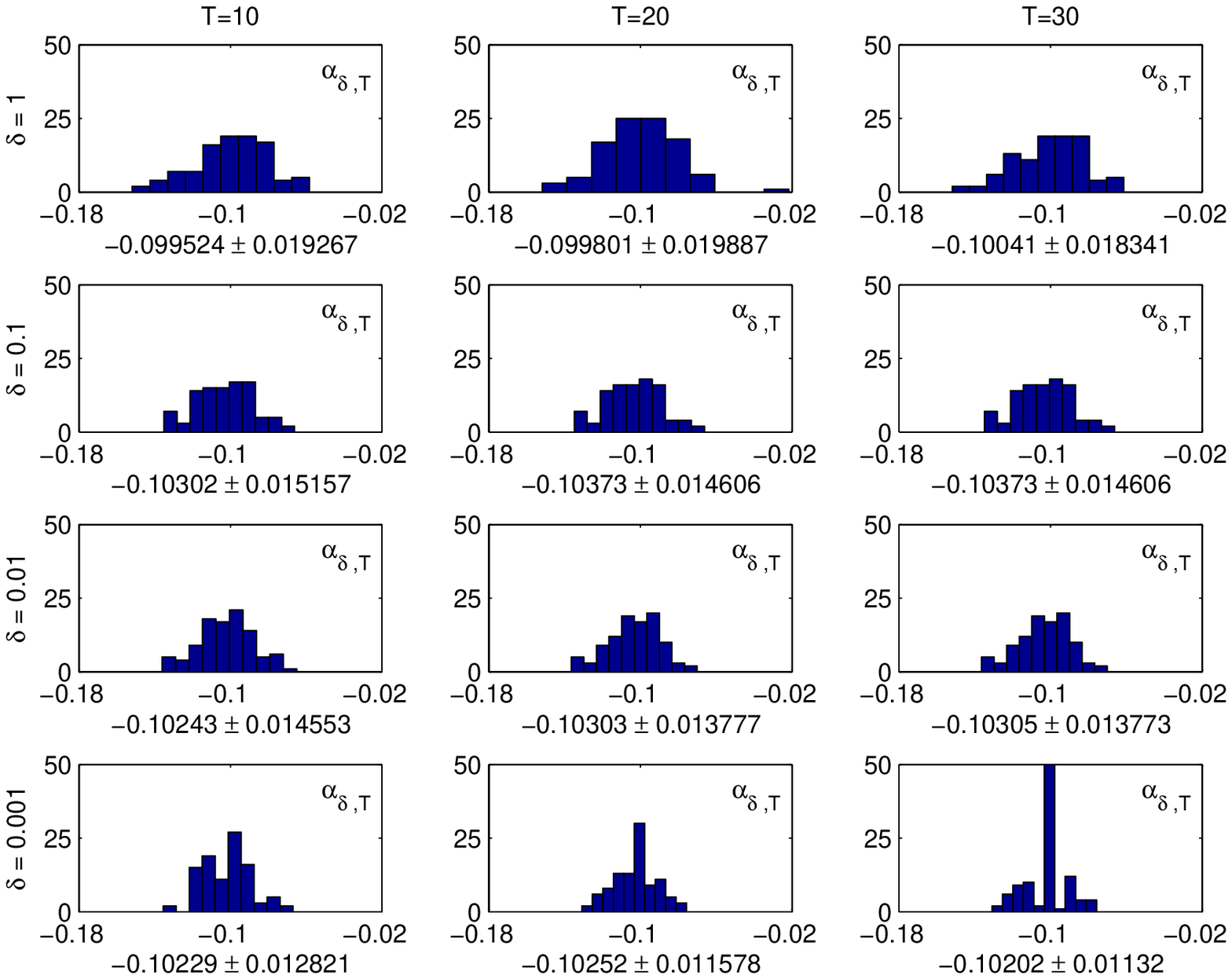}
\end{array}$
\caption{Histograms and confidence limits for the exact ($\widehat{\protect\alpha }_{\protect\delta %
,T}^{E}$) and the conventional ($\widehat{\protect\alpha }_{\protect\delta %
,T}$) innovation estimators of $\protect\alpha $ computed from the
Example 1 data with sampling period $\protect\delta $ and time
interval of length $T$.}
\end{figure}

\begin{figure}
\centering
 $\begin{array}{c}
  \includegraphics[width=5in]{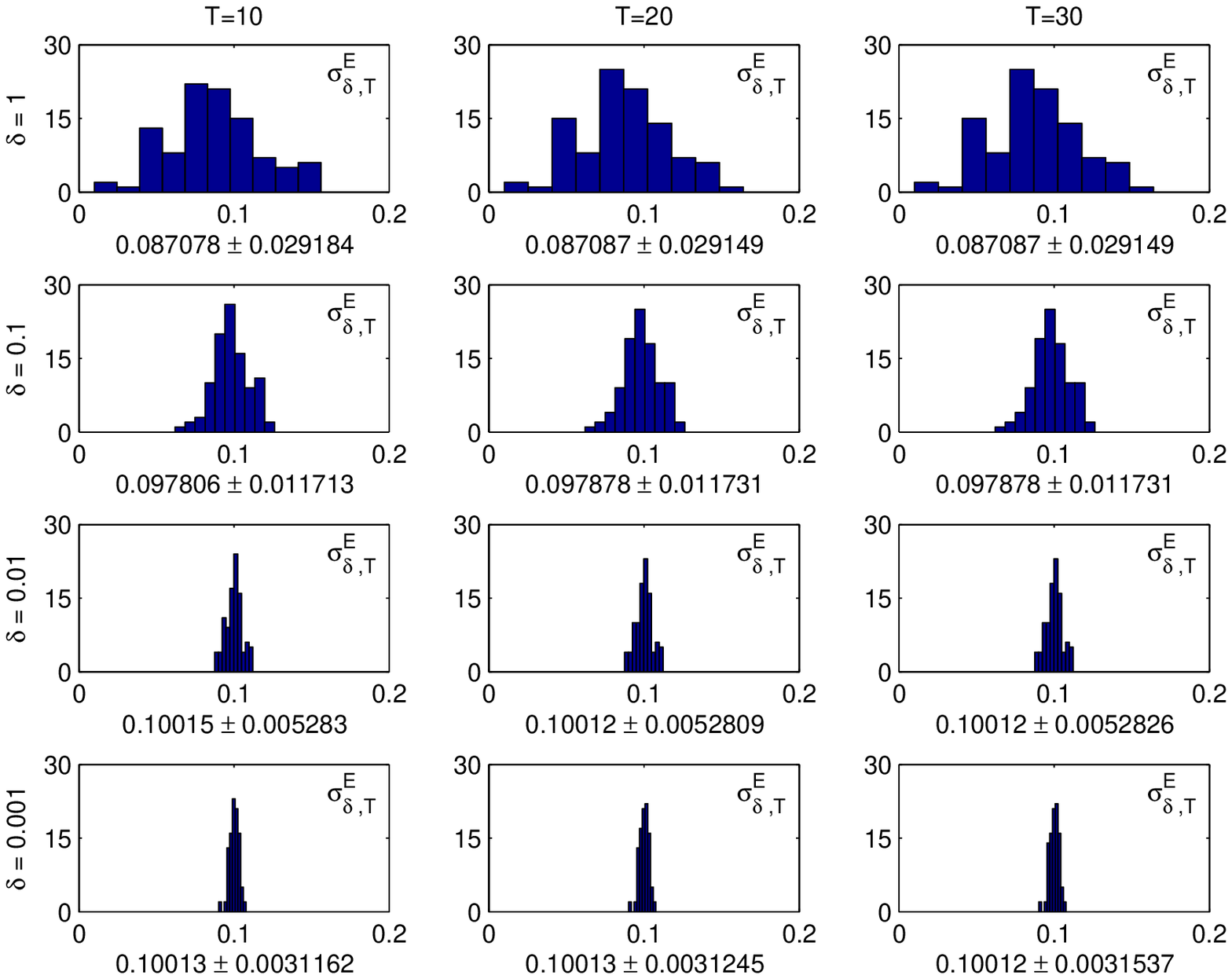} \\
  \includegraphics[width=5in]{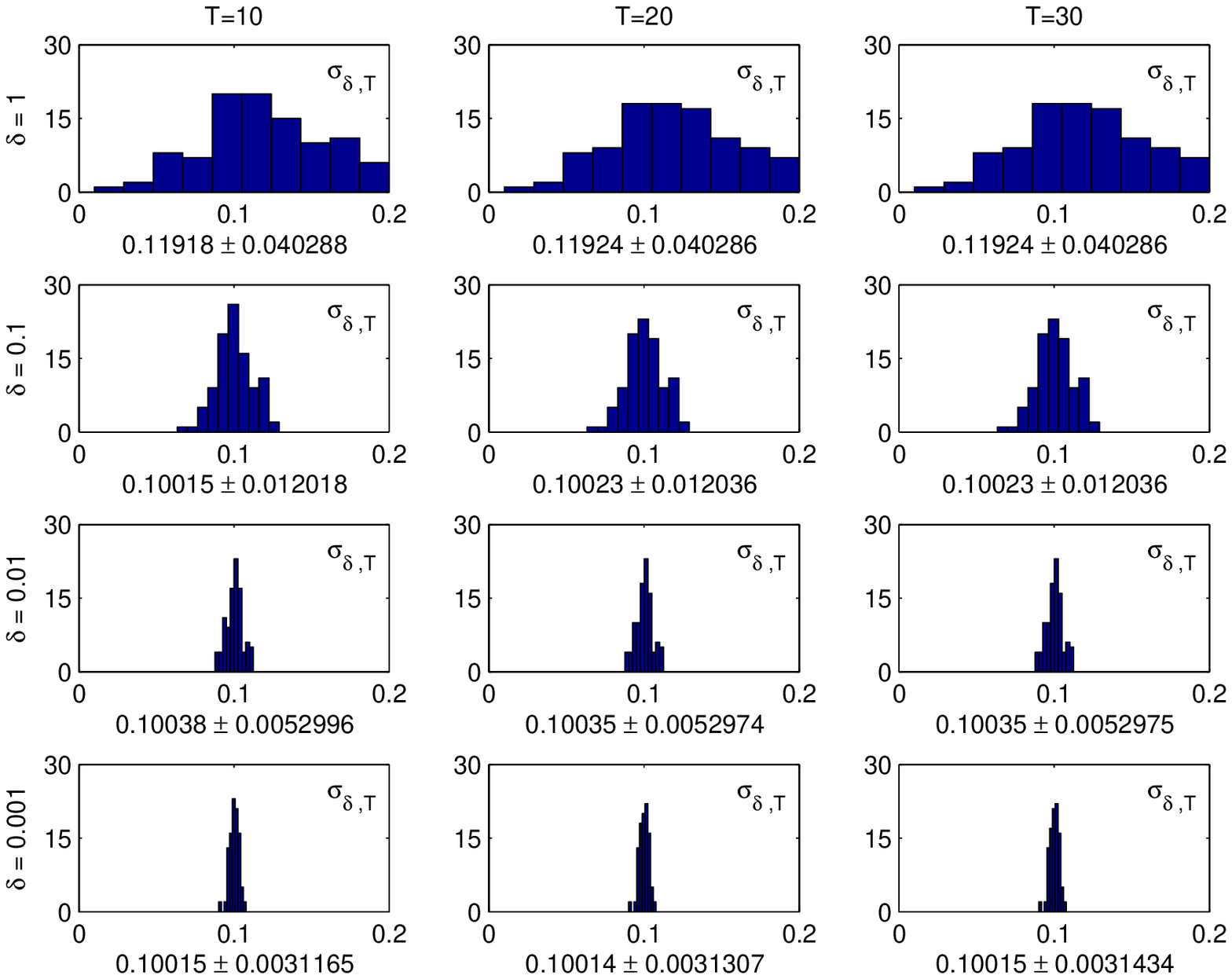}
\end{array}$
\caption{Histograms and confidence limits for the exact ($\widehat{\protect\sigma }_{\protect\delta %
,T}^{E}$) and the conventional ($\widehat{\protect\sigma }_{\protect\delta %
,T}$) innovation estimators of $\protect\sigma $ computed from the
Example 1 data with sampling period $\protect\delta $ and time
interval of length $T$.}
\end{figure}

\begin{figure}
\centering
 $\begin{array}{c}
  \includegraphics[width=5in]{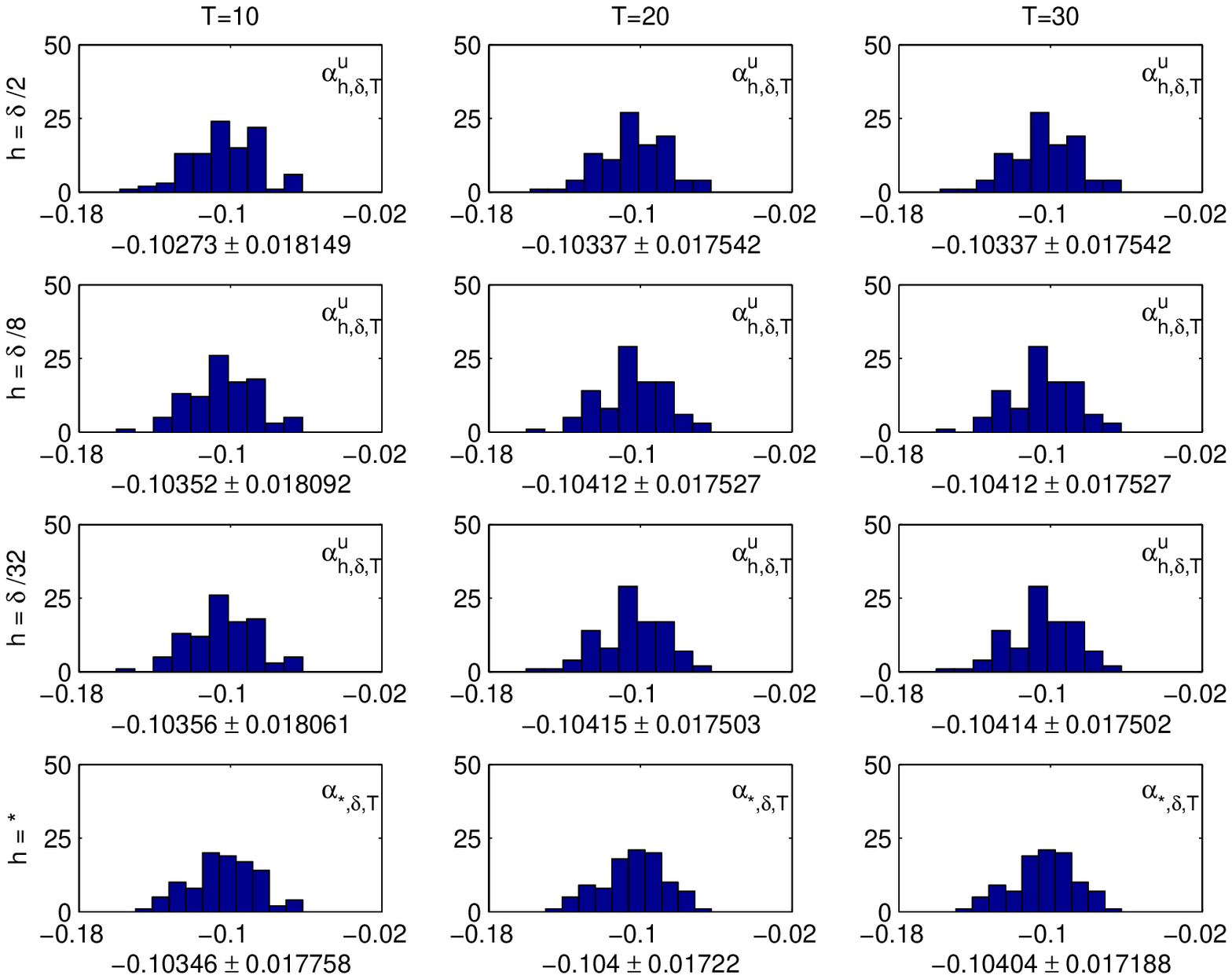} \\
  \includegraphics[width=5in]{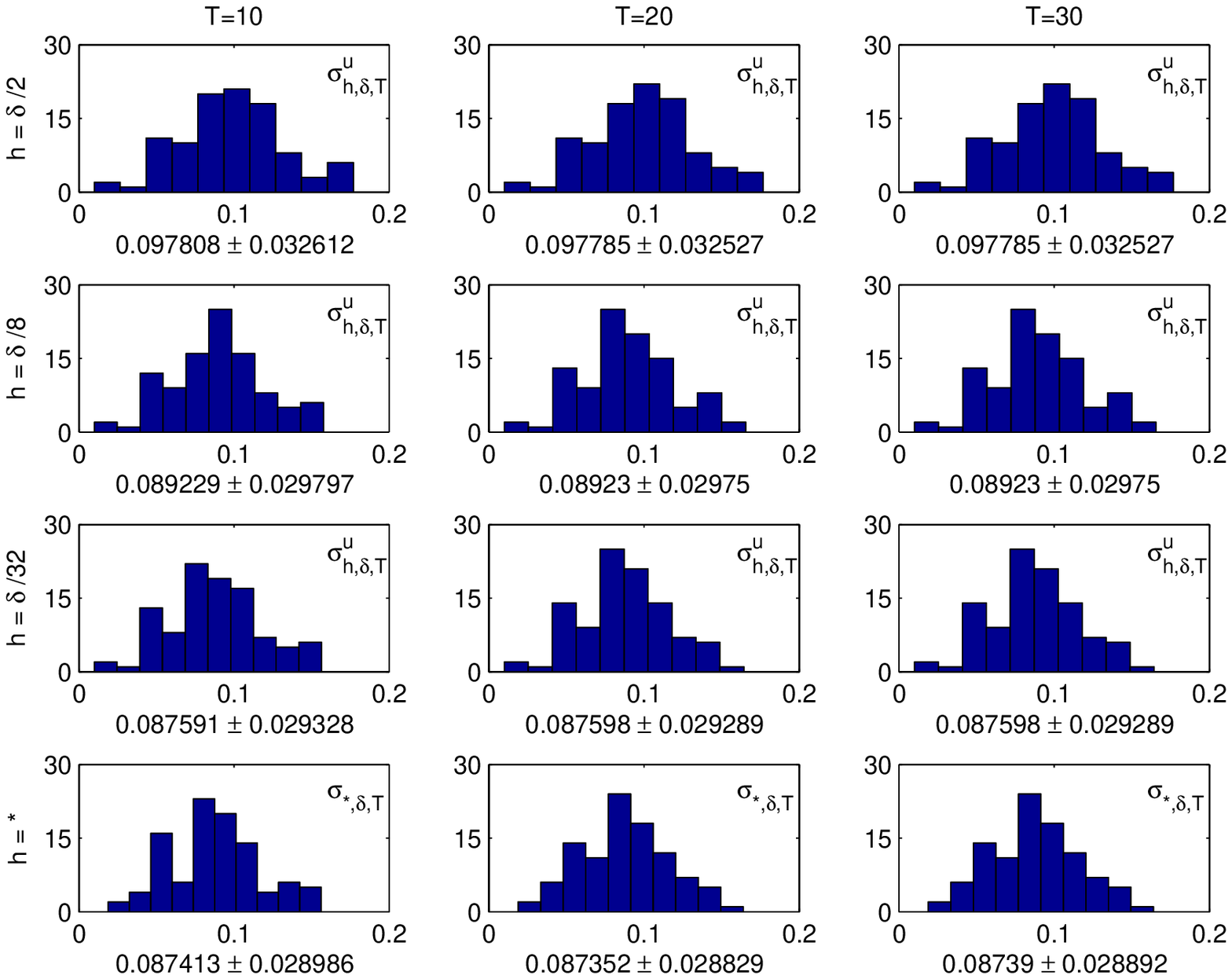}
\end{array}$
\caption{Histograms and confidence limits for the oder-1 innovation estimators of $\protect\alpha $ and $%
\protect\sigma $\ computed on uniform $\left( \protect\tau \right)
_{h,T}^{u}$ and adaptive $\left( \protect\tau \right) _{\cdot ,T}$
time discretizations from the Example 1 data with sampling period
$\protect\delta =1$ and time interval of length $T$.}
\end{figure}

\begin{figure}
\centering
 $\begin{array}{c}
  \includegraphics[width=5in]{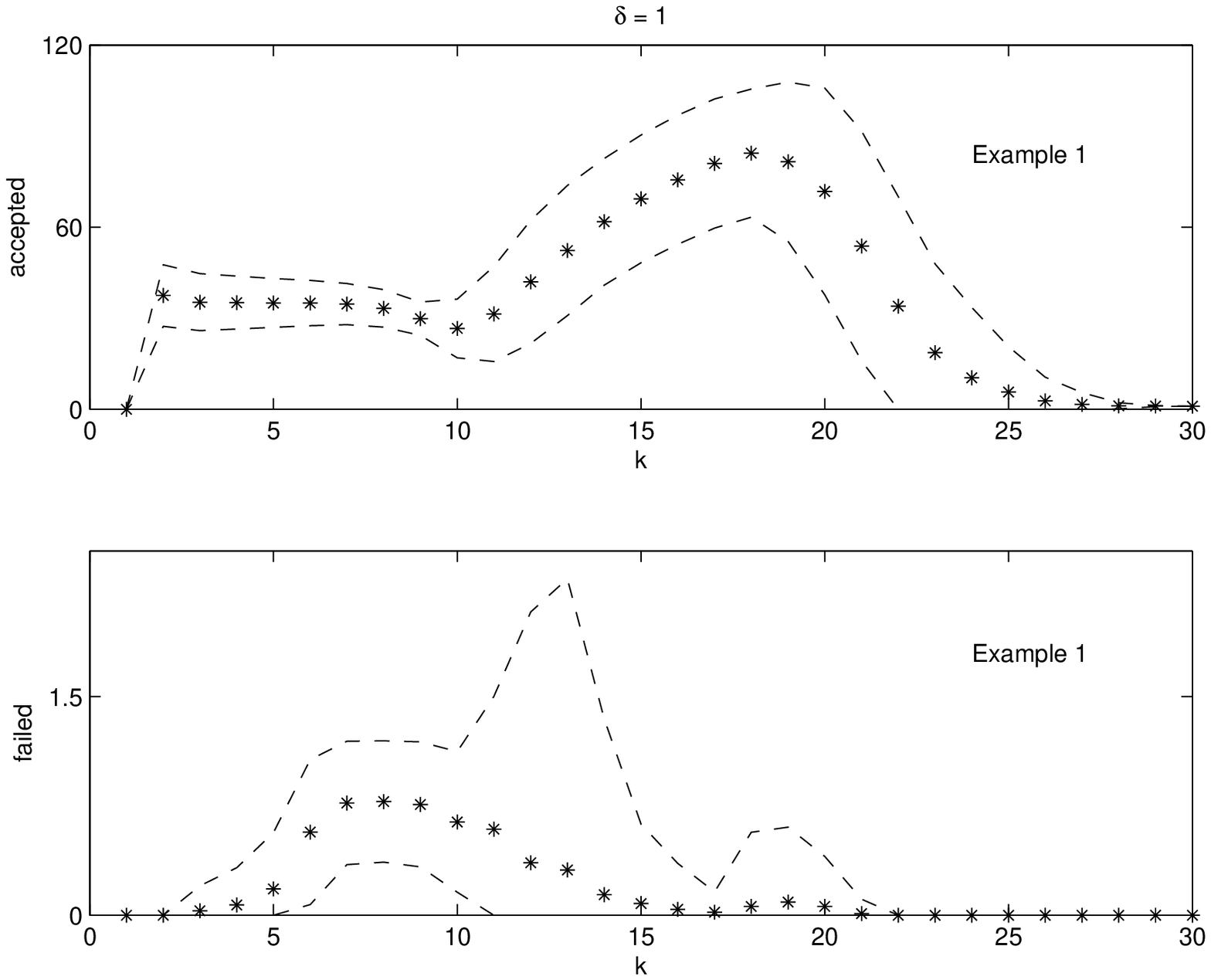} \\
  \includegraphics[width=5in]{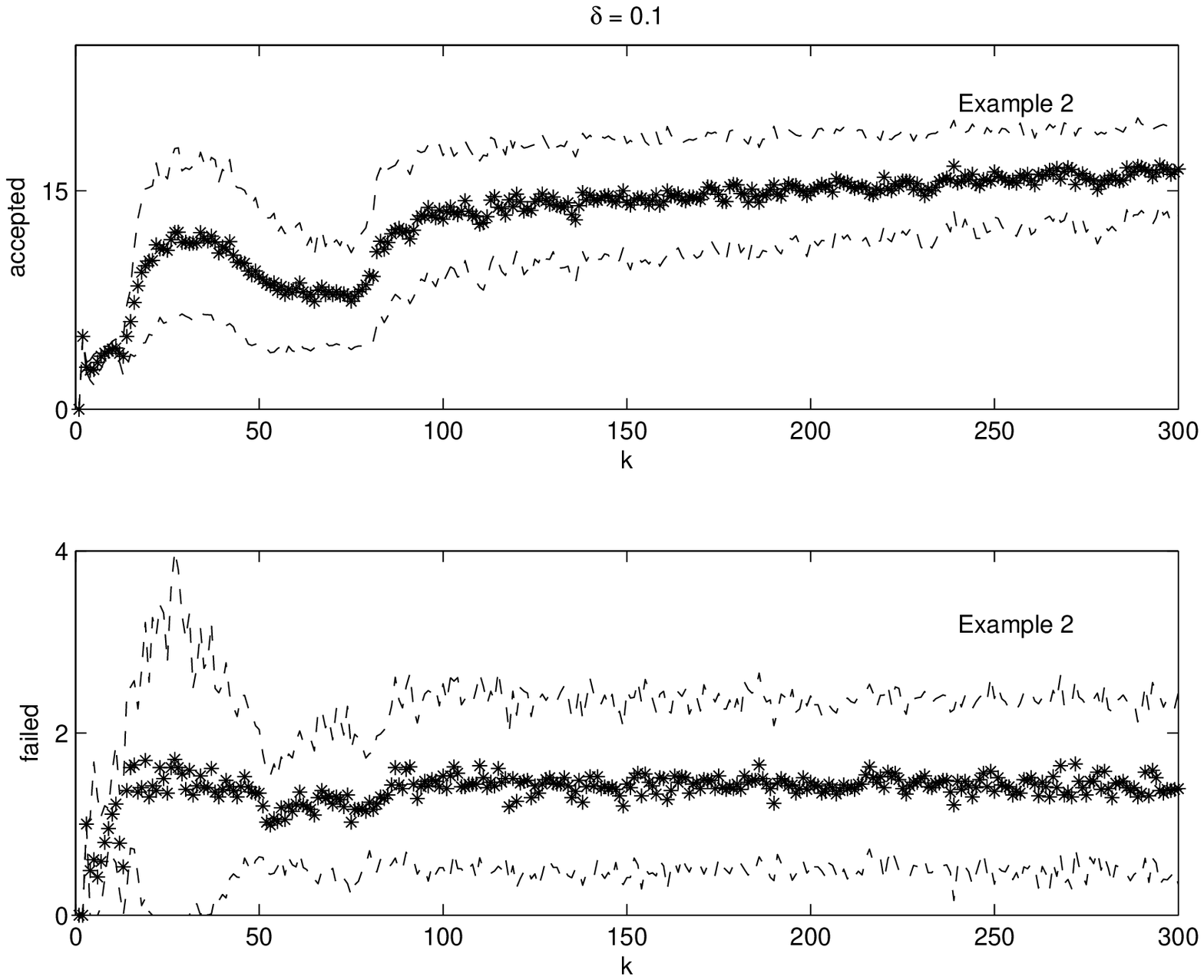}
\end{array}$
\caption{Average (*) and 90\% confidence limits (-) of accepted and
failed steps of the adaptive innovation estimator at each $t_{k}\in
\{t\}_{N}$ in the Examples 1 and 2.}
\end{figure}

\begin{figure}
\centering
 $\begin{array}{c}
  \includegraphics[width=5in]{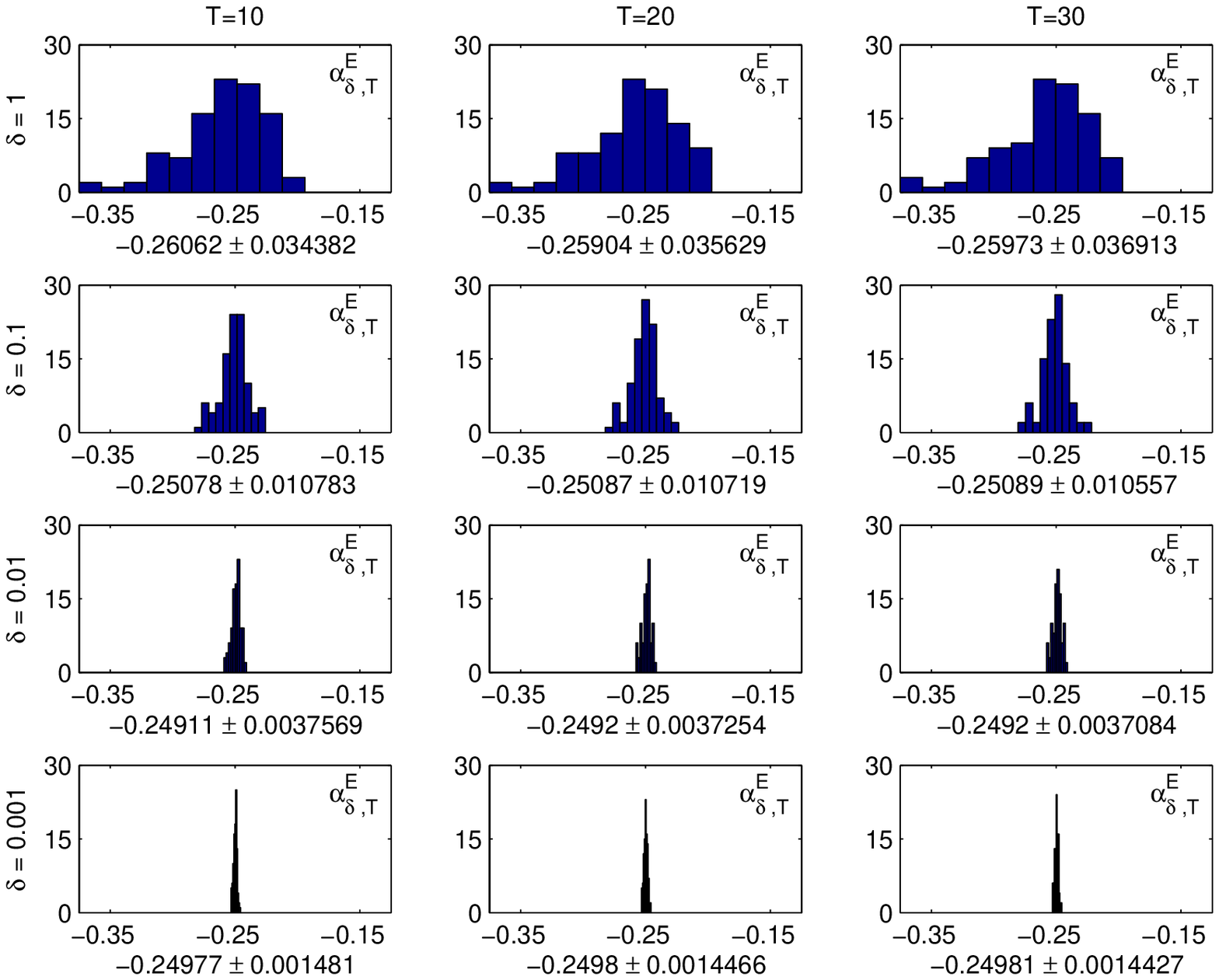} \\
  \includegraphics[width=5in]{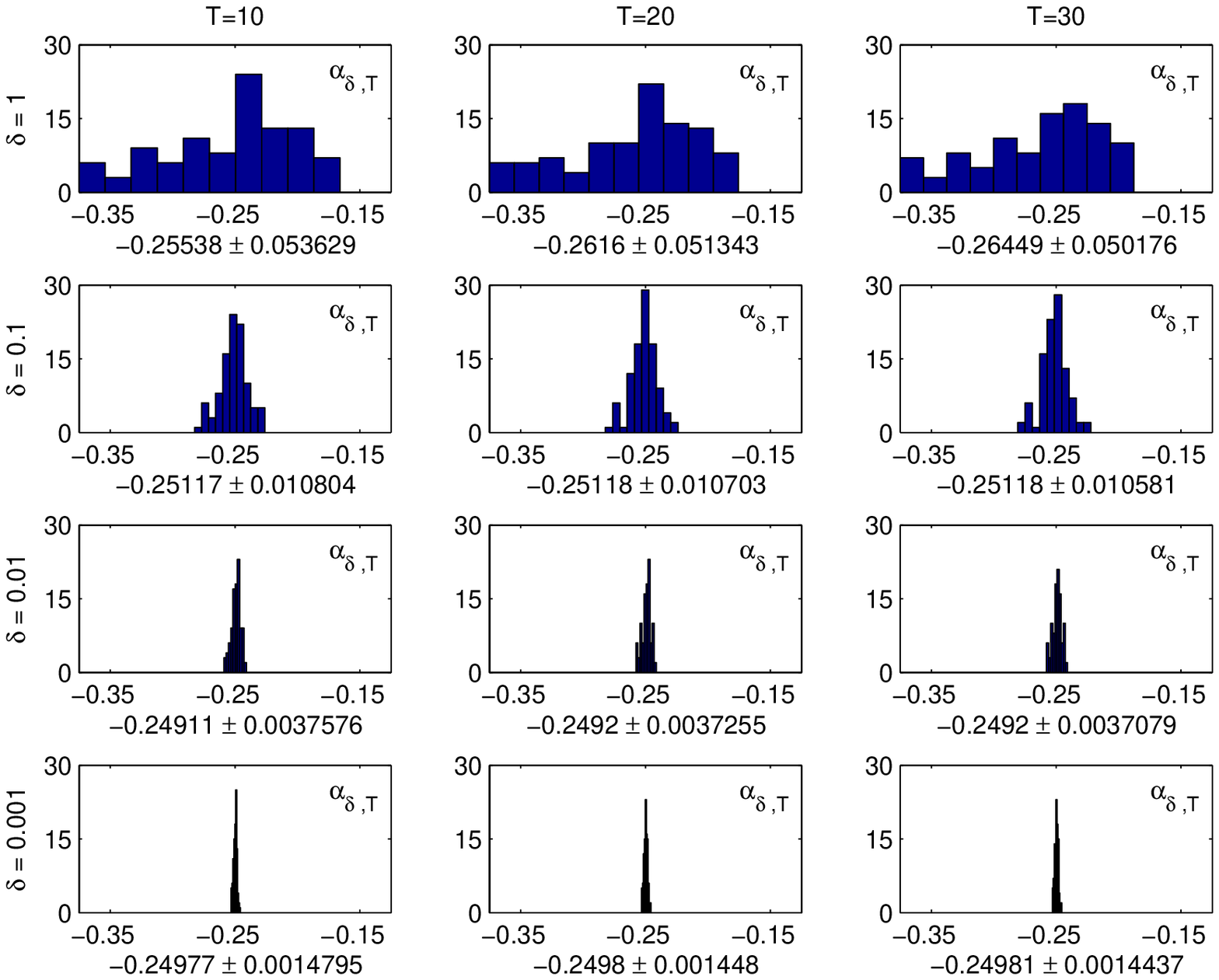}
\end{array}$
\caption{Histograms and confidence limits for the exact ($\widehat{\protect\alpha }_{\protect\delta %
,T}^{E}$) and the conventional ($\widehat{\protect\alpha }_{\protect\delta %
,T}$) innovation estimators of $\protect\alpha $ computed from the
Example 2 data with sampling period $\protect\delta $ and time
interval of length $T$.}
\end{figure}

\begin{figure}
\centering
 $\begin{array}{c}
  \includegraphics[width=5in]{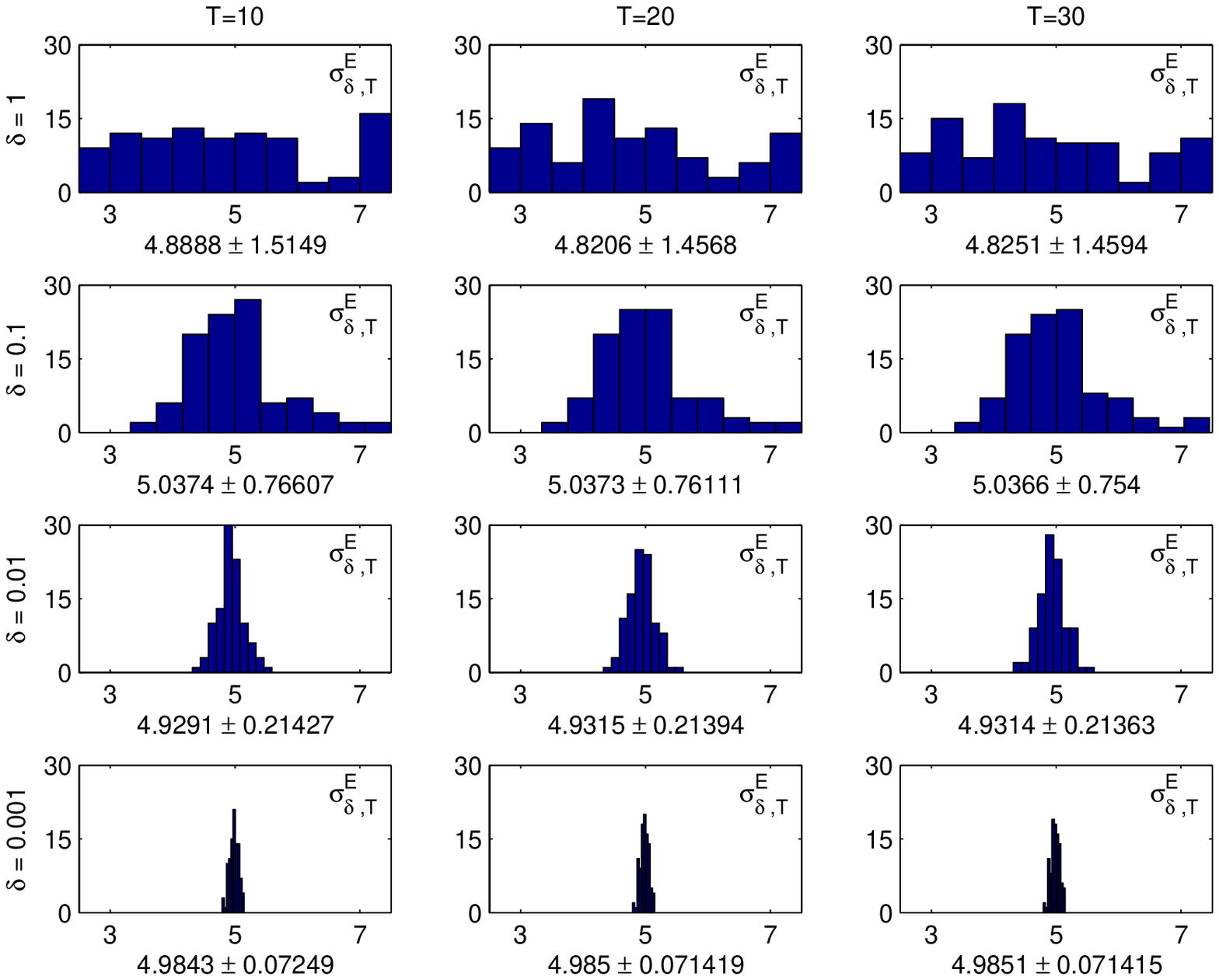} \\
  \includegraphics[width=5in]{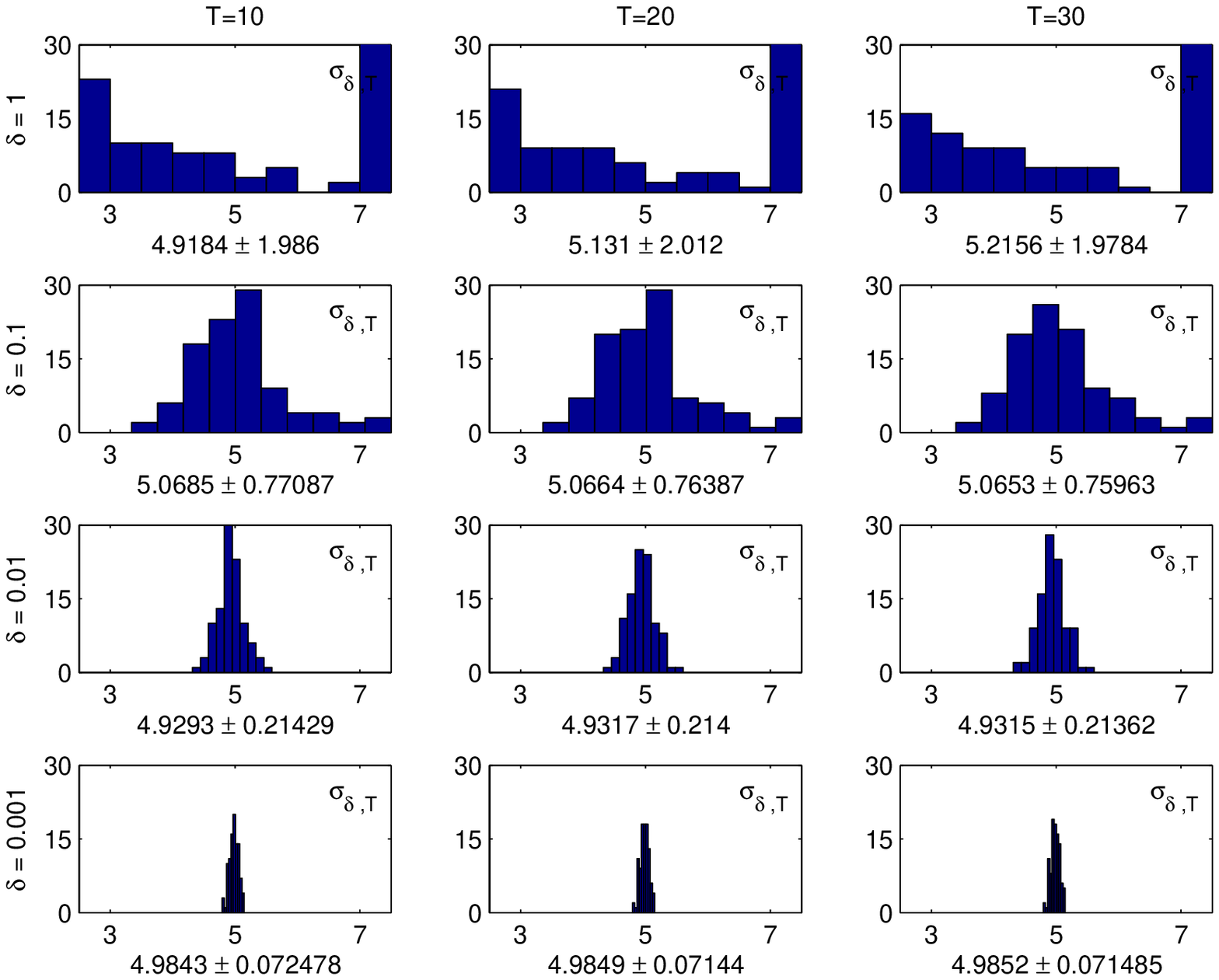}
\end{array}$
\caption{Histograms and confidence limits for the exact ($\widehat{\protect\sigma }_{\protect\delta %
,T}^{E}$) and the conventional ($\widehat{\protect\sigma }_{\protect\delta %
,T}$) innovation estimators of $\protect\sigma $ computed from the
Example 2 data with sampling period $\protect\delta $ and time
interval of length $T$.}
\end{figure}

\begin{figure}
\centering
 $\begin{array}{c}
  \includegraphics[width=5in]{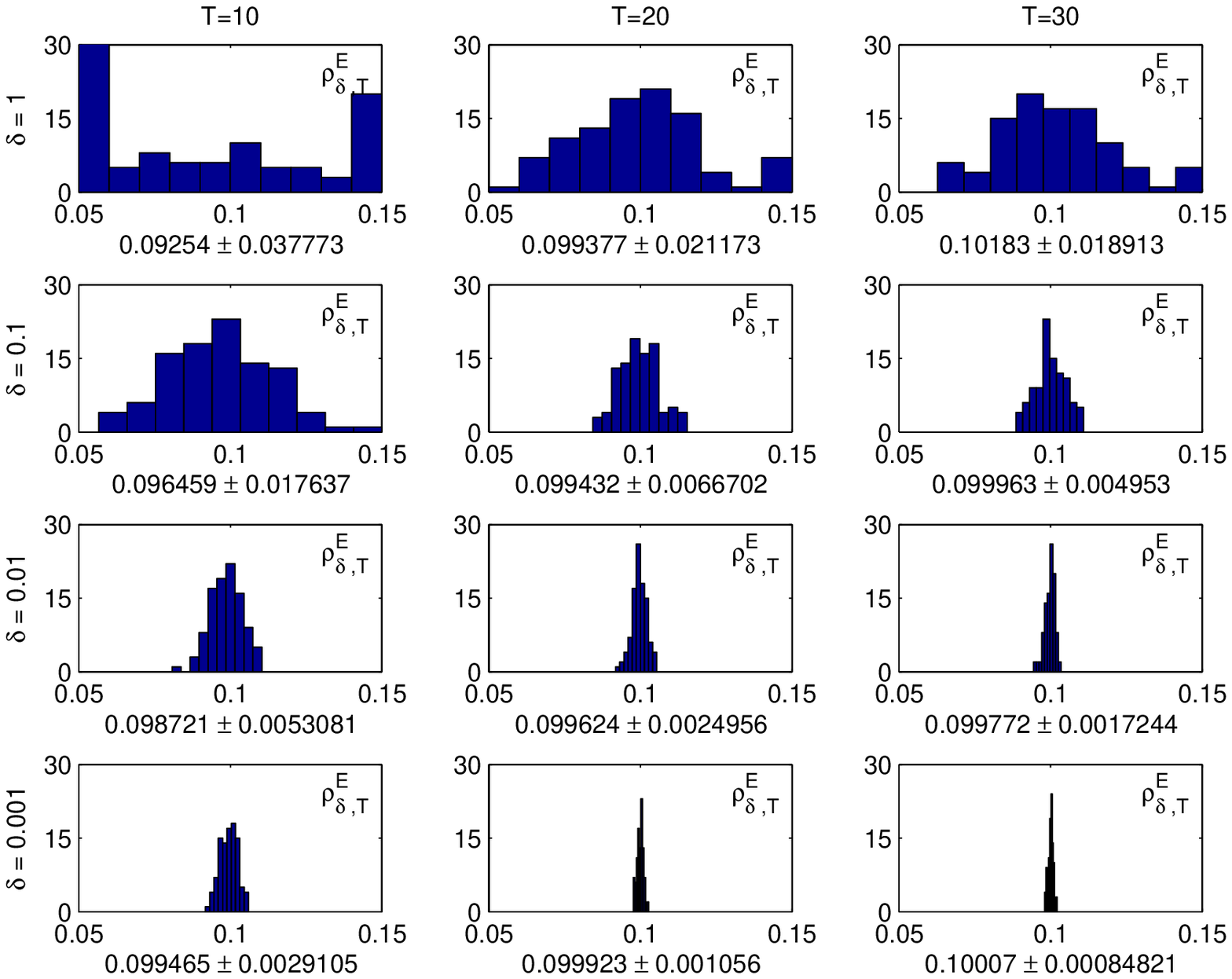} \\
  \includegraphics[width=5in]{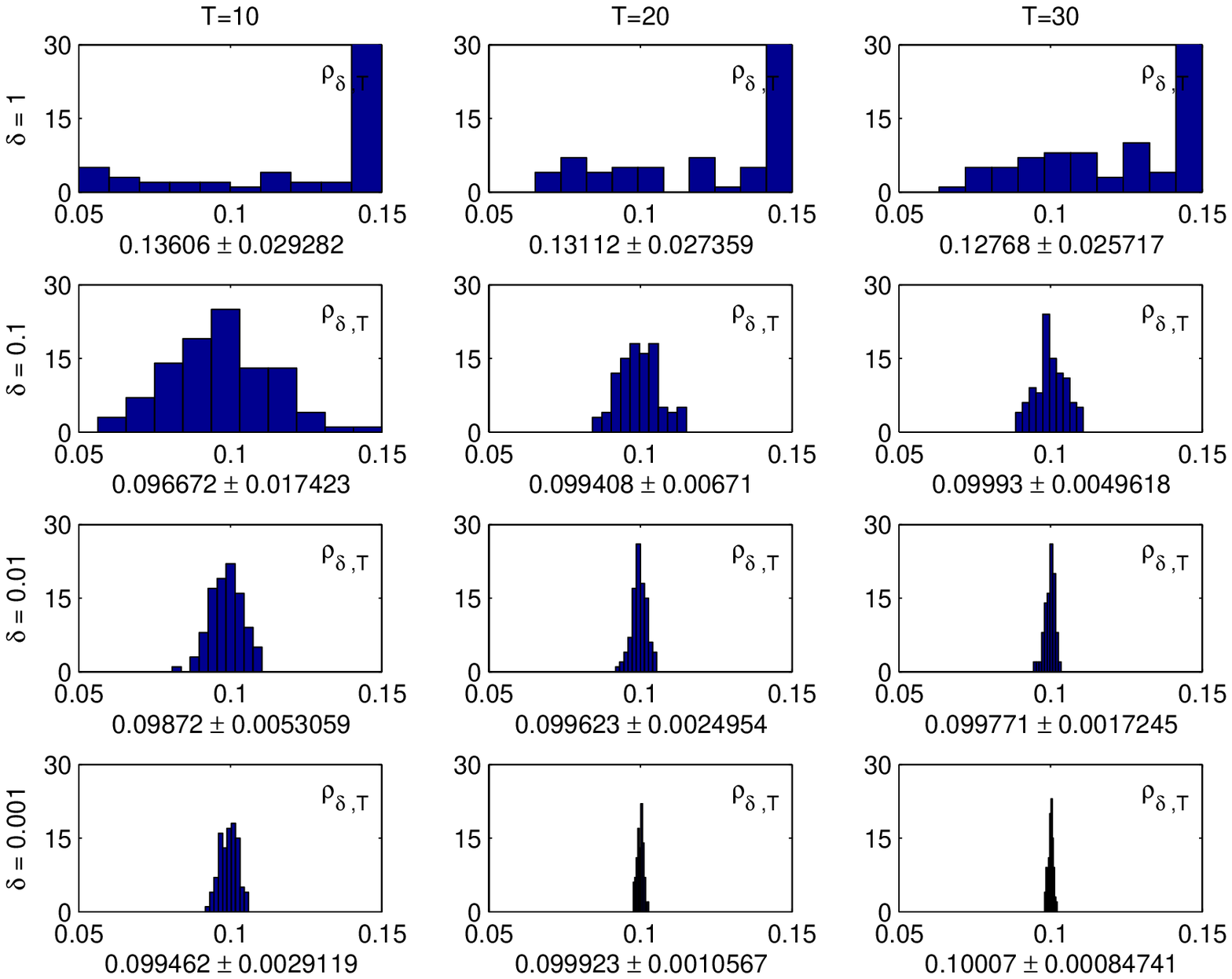}
\end{array}$
\caption{Histograms and confidence limits for the exact ($\widehat{\protect\rho }_{\protect\delta %
,T}^{E}$) and the conventional ($\widehat{\protect\rho }_{\protect\delta ,T}$%
) innovation estimators of $\protect\rho $ computed from the Example
2 data with sampling period $\protect\delta $ and time interval of
length $T$.}
\end{figure}

\subfiguresbegin
\begin{figure}
\centering
 $\begin{array}{c}
  \includegraphics[width=5in]{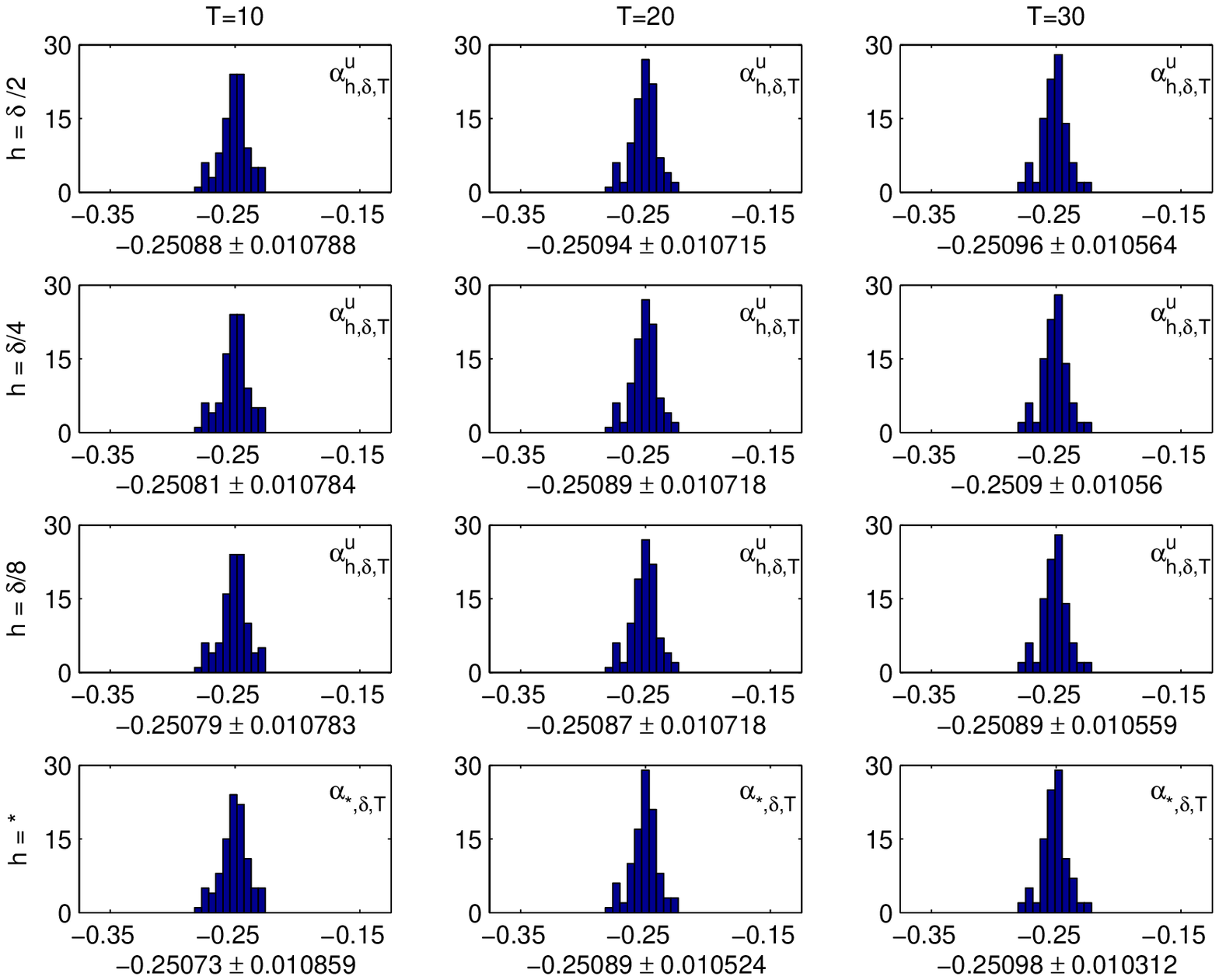} \\
  \includegraphics[width=5in]{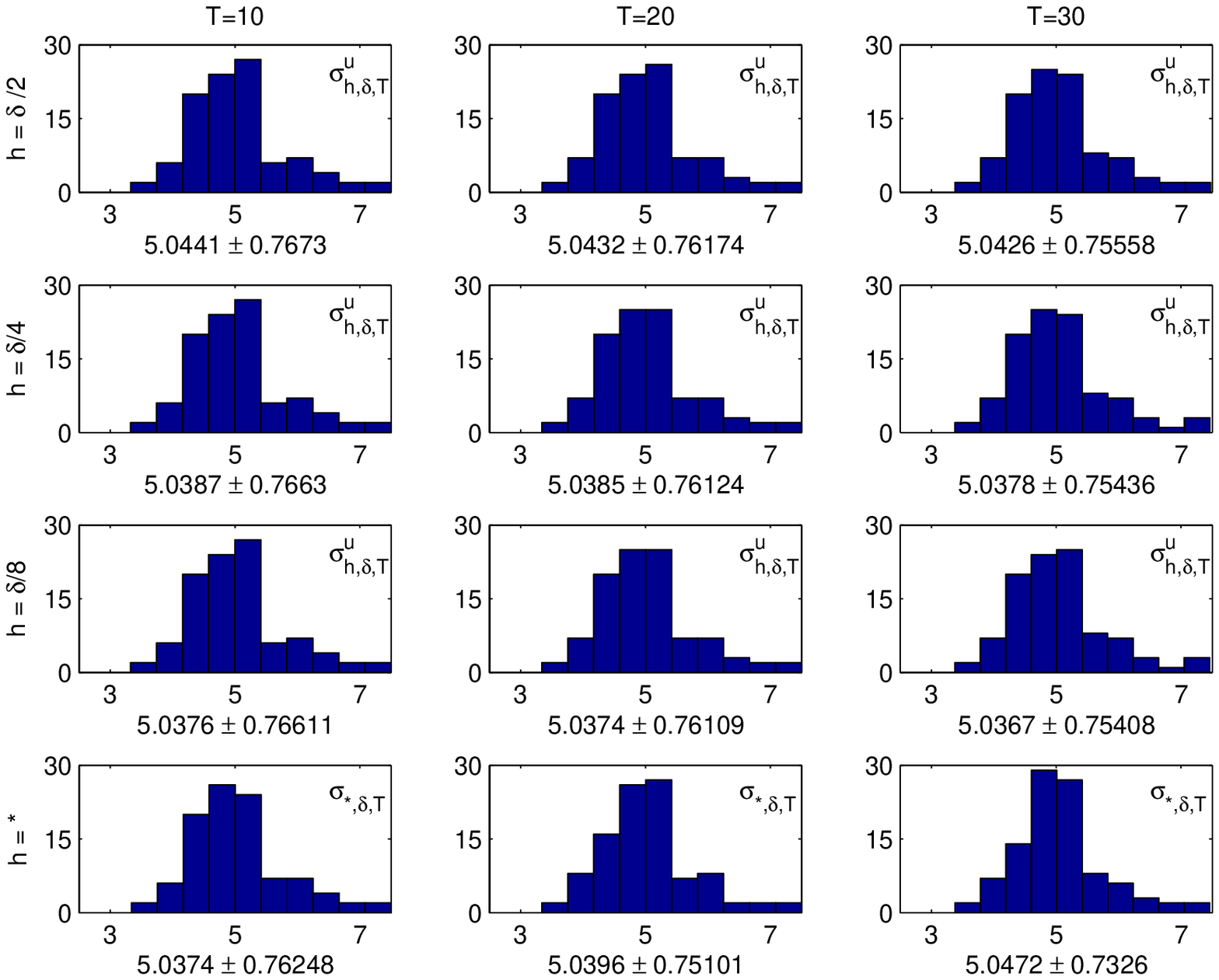}
\end{array}$
\caption{Histograms and confidence limits for the oder-1 innovation estimators of $\protect\alpha $ and $%
\protect\sigma $\ computed on uniform $\left( \protect\tau \right)
_{h,T}^{u}$ and adaptive $\left( \protect\tau \right) _{\cdot ,T}$
time discretizations from the Example 2 data with sampling period
$\protect\delta =0.1$ and time interval of length $T$.}
\end{figure}

\begin{figure}
\centering
\includegraphics[width=5in]{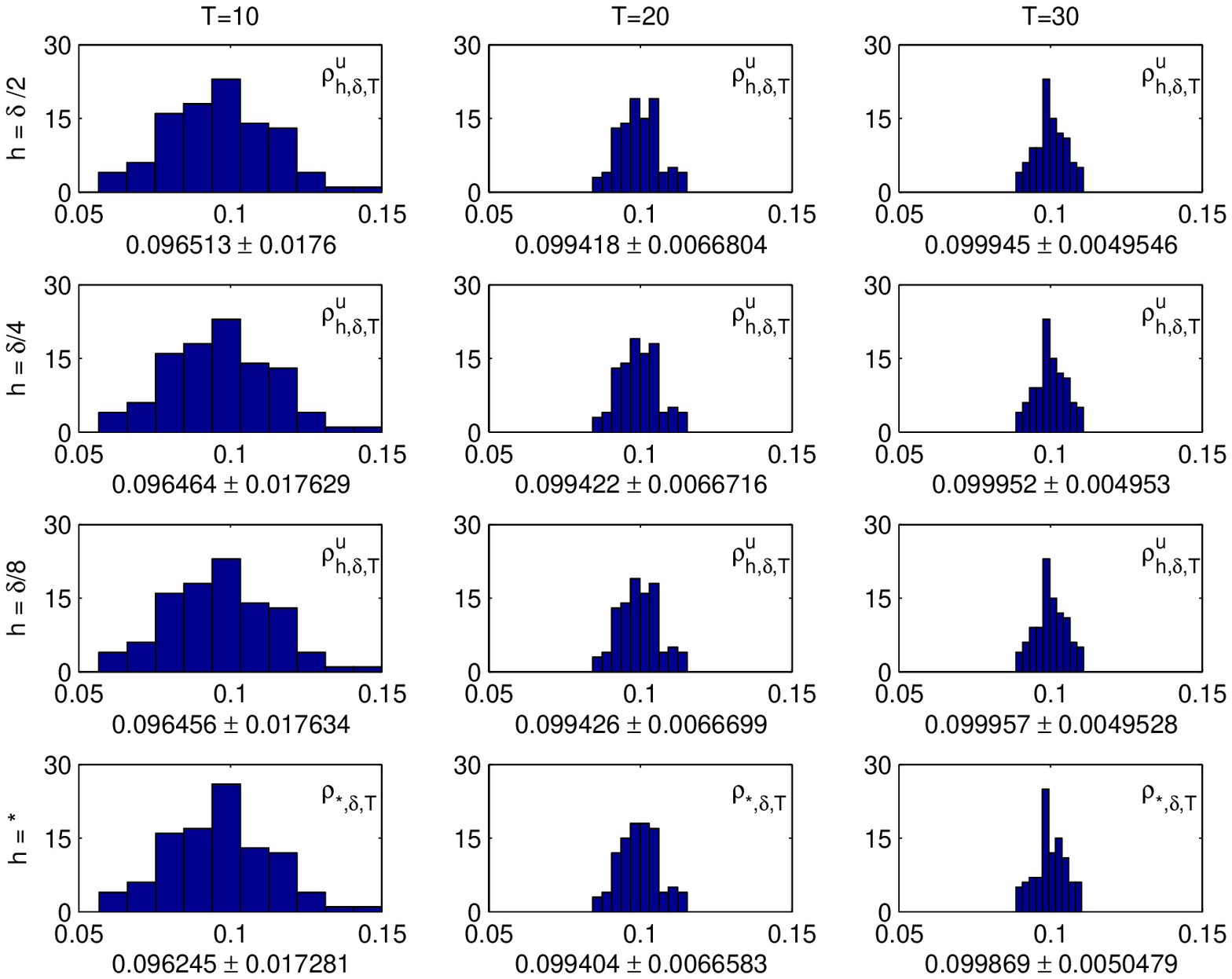}
\caption{Histograms and confidence limits for the oder-1 innovation estimators of $\protect\rho $\ computed on uniform $\left( \protect\tau \right) _{h,T}^{u}$\ and adaptive $%
\left( \protect\tau \right) _{\cdot ,T}$ time discretizations from
the Example 2 data with sampling period $\protect\delta =0.1$ and
time interval of length $T$.}
\end{figure}
\subfiguresend

\begin{figure}
\centering
 $\begin{array}{c}
  \includegraphics[width=5in]{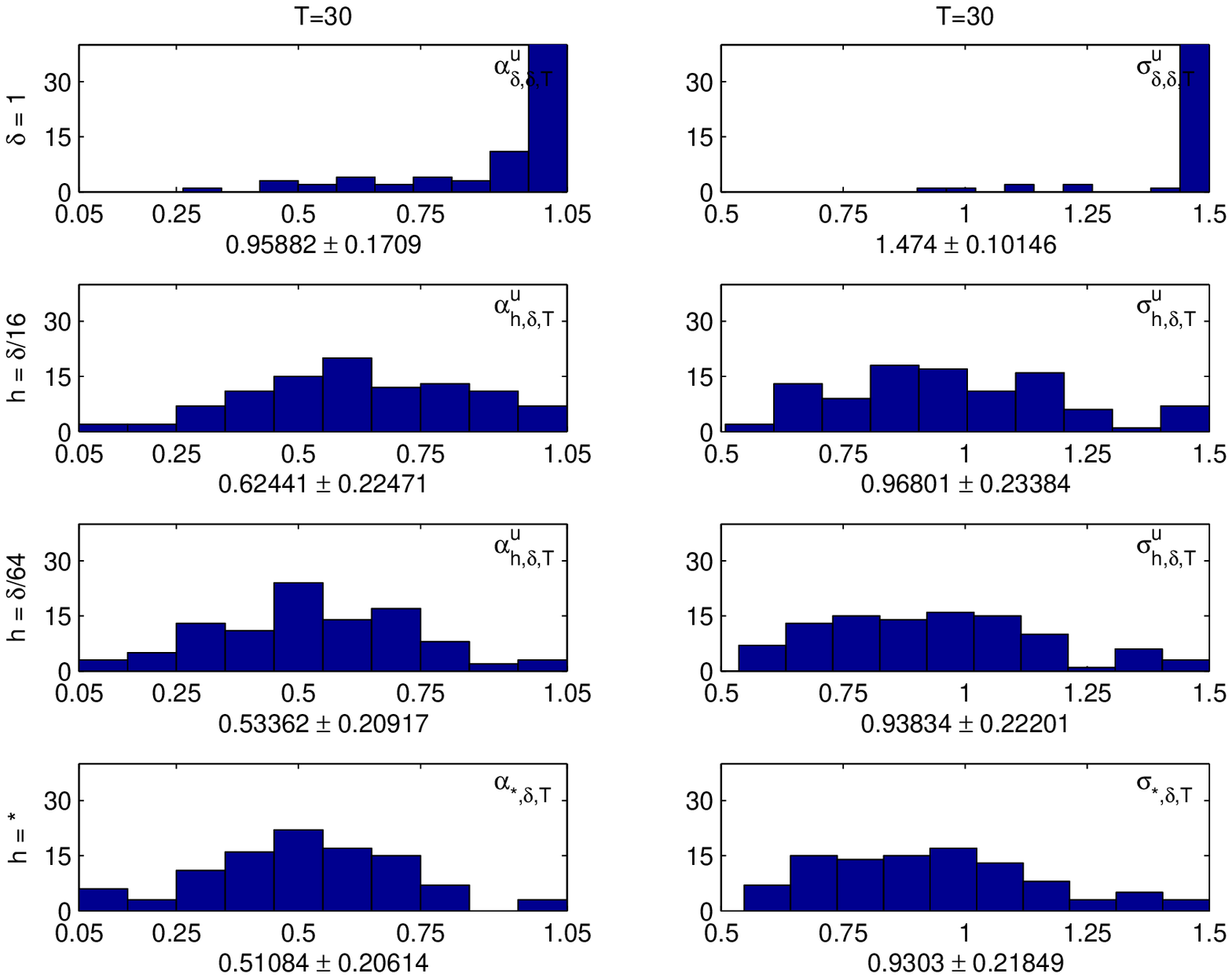} \\
  \includegraphics[width=5in]{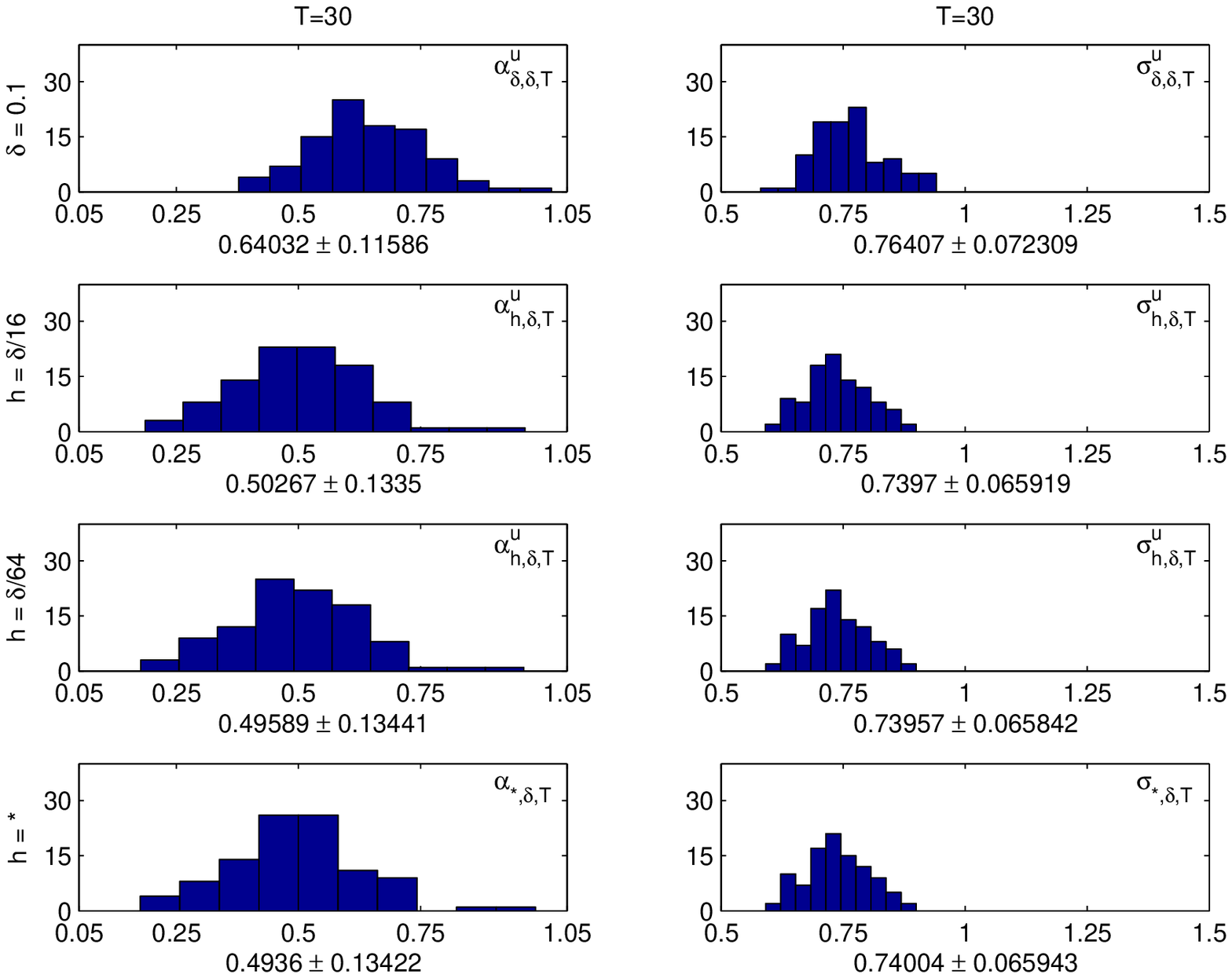}
\end{array}$
\caption{Histograms and confidence limits for the oder-1 innovation estimators of $\protect\alpha $ and $%
\protect\sigma $\ computed on uniform $\left( \protect\tau \right)
_{h,T}^{u}$ and adaptive $\left( \protect\tau \right) _{\cdot ,T}$
time discretizations from the Example 3 data with sampling period
$\protect\delta$ and time interval of length $T=30$.}
\end{figure}

\begin{figure}
\centering
 $\begin{array}{c}
  \includegraphics[width=5in]{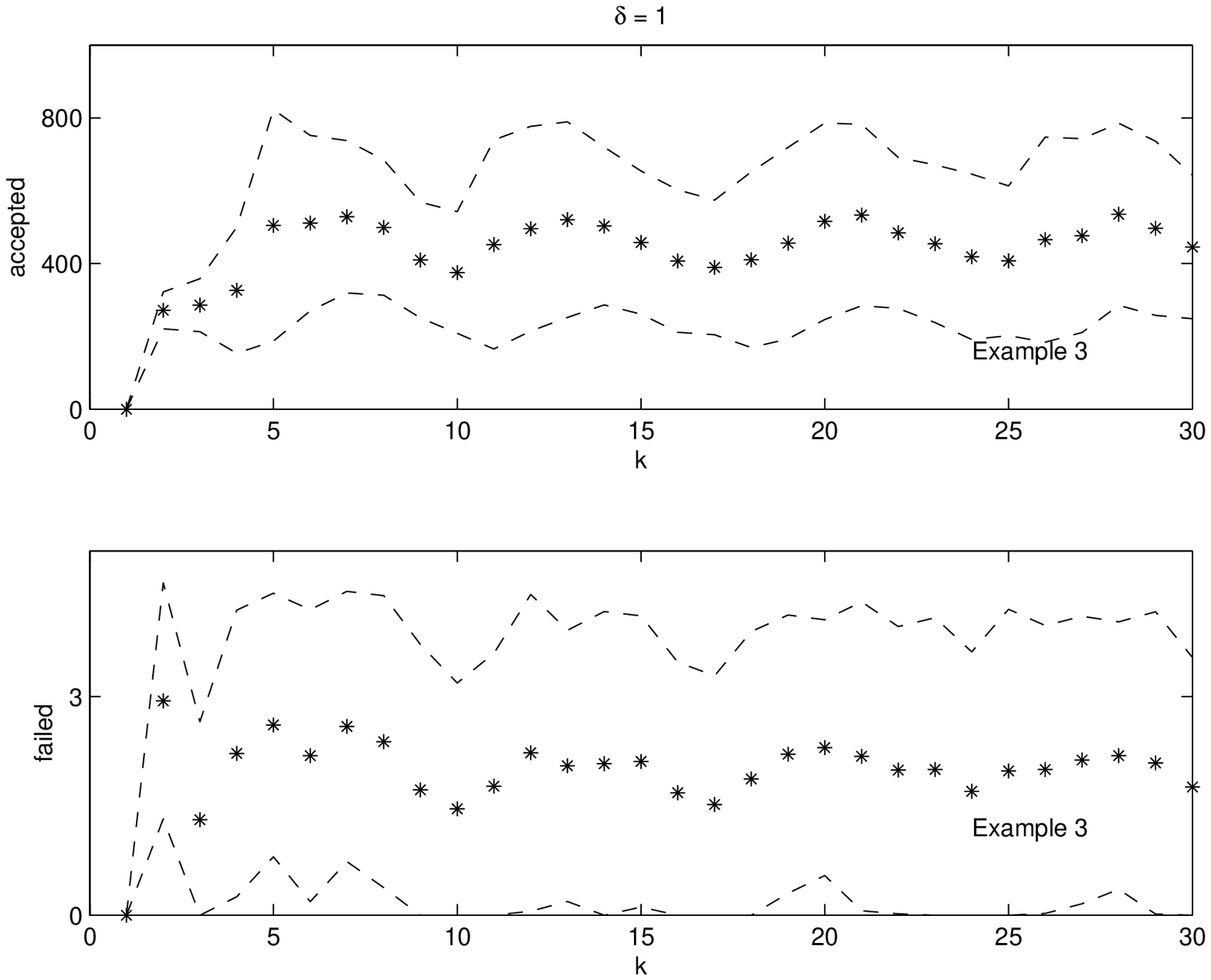} \\
  \includegraphics[width=5in]{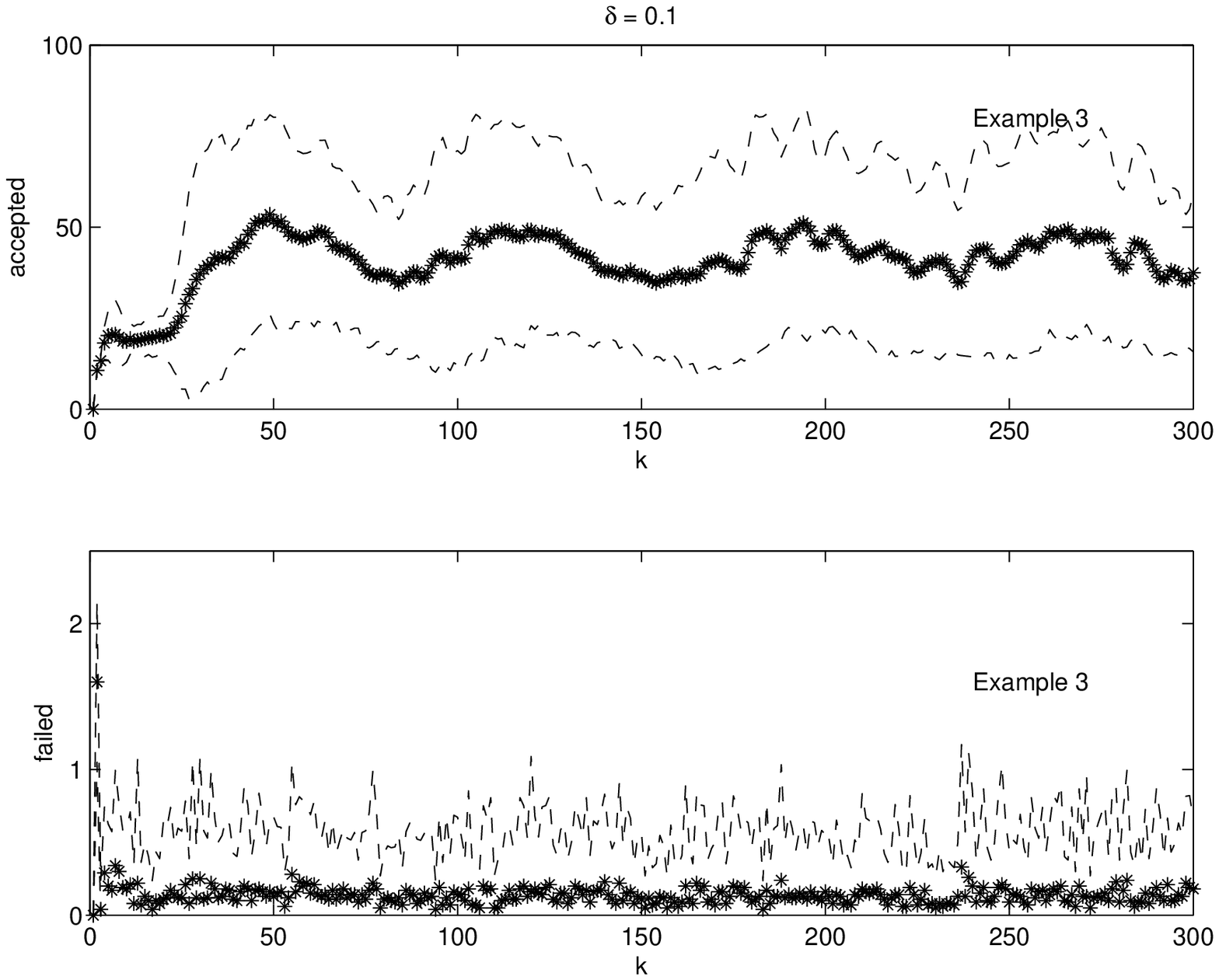}
\end{array}$
\caption{Average (*) and 90\% confidence limits (-) of accepted and
failed steps of the adaptive innovation estimator at each $t_{k}\in
\{t\}_{N}$ in the Example 3.}
\end{figure}

\begin{figure}
\centering
 $\begin{array}{c}
  \includegraphics[width=5in]{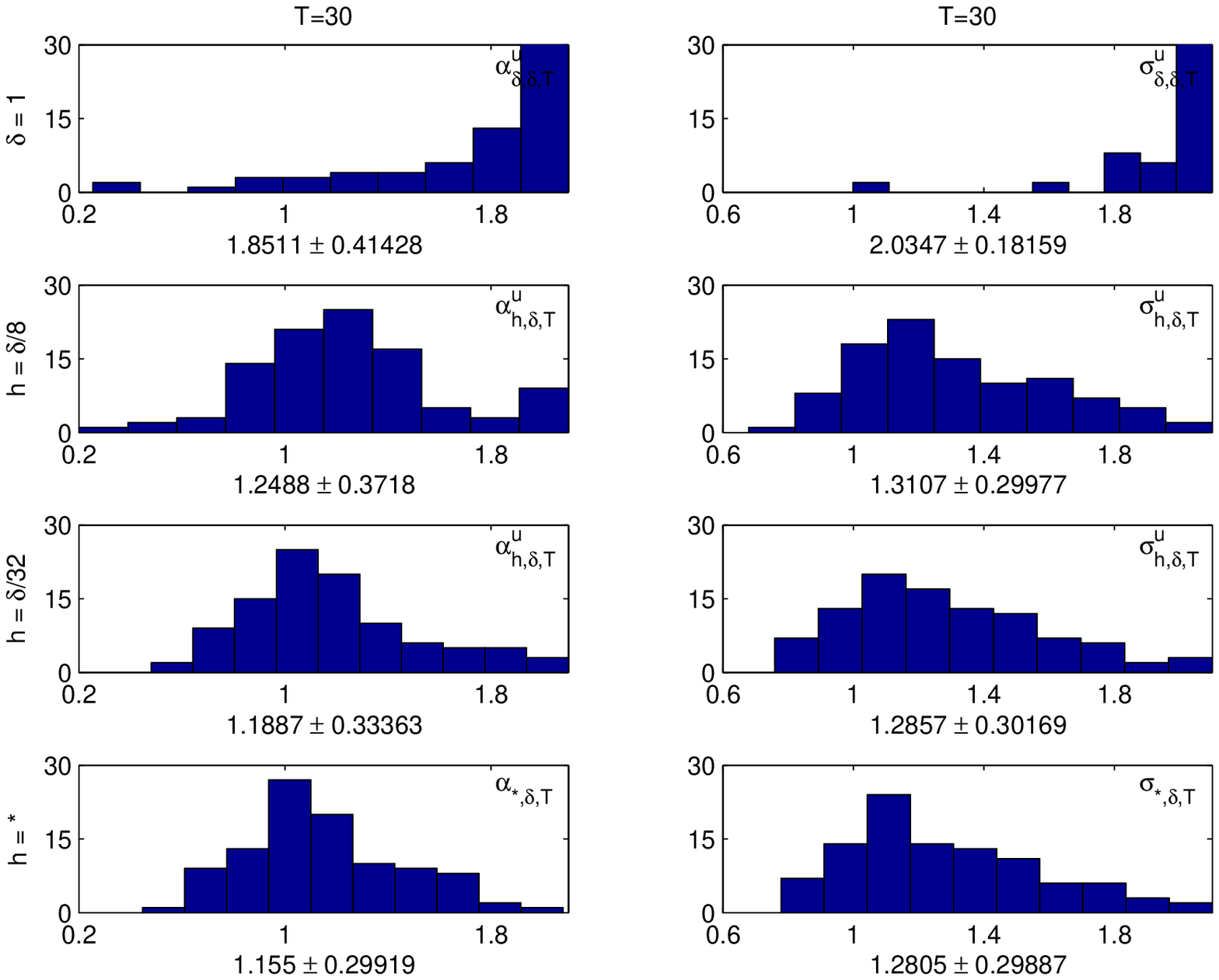} \\
  \includegraphics[width=5in]{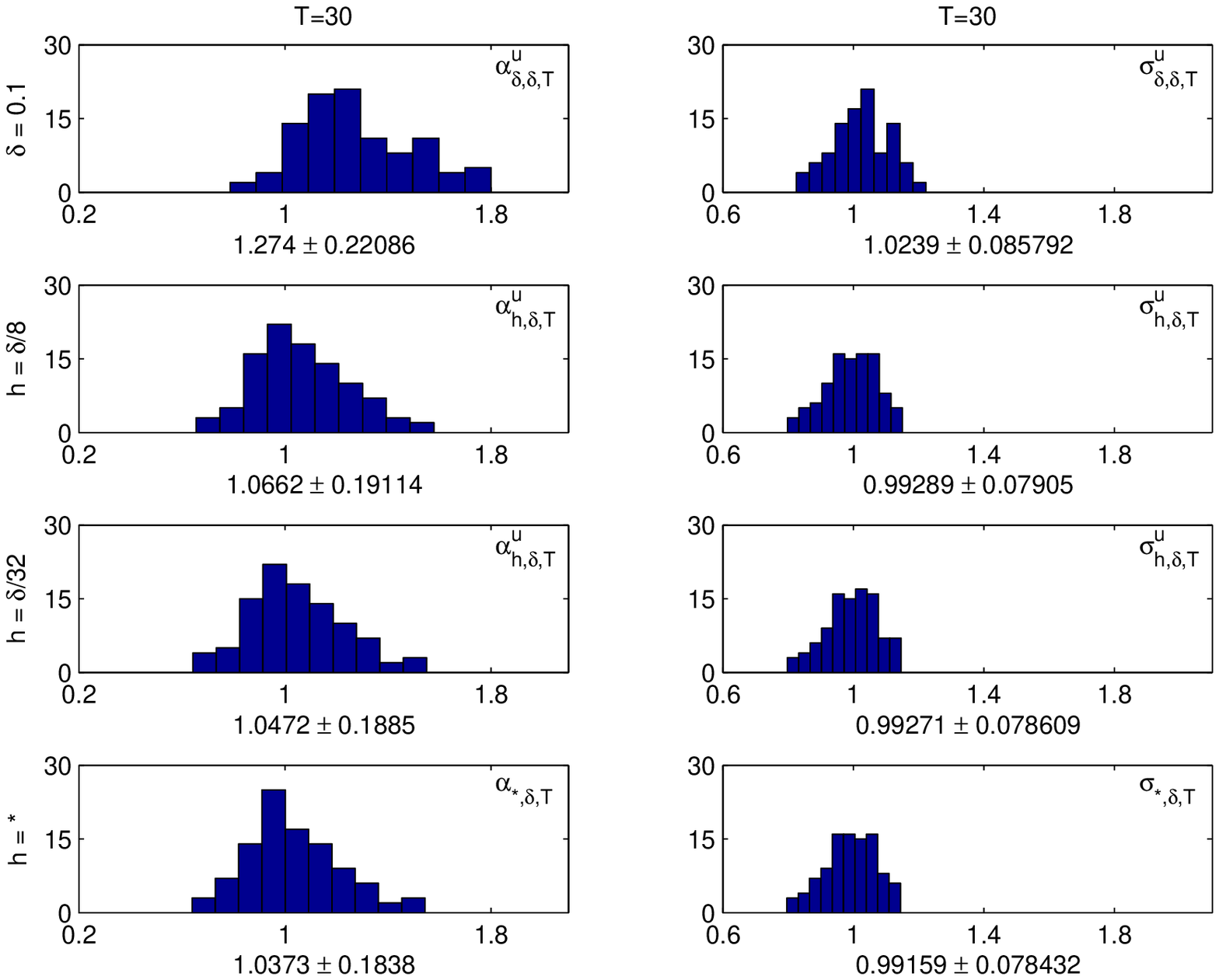}
\end{array}$
\caption{Histograms and confidence limits for the oder-1 innovation estimators of $\protect\alpha $ and $%
\protect\sigma $\ computed on uniform $\left( \protect\tau \right)
_{h,T}^{u}$ and adaptive $\left( \protect\tau \right) _{\cdot ,T}$
time discretizations from the Example 4 data with sampling period
$\protect\delta$ and time interval of length $T=30$}
\end{figure}

\begin{figure}
\centering
 $\begin{array}{c}
  \includegraphics[width=5in]{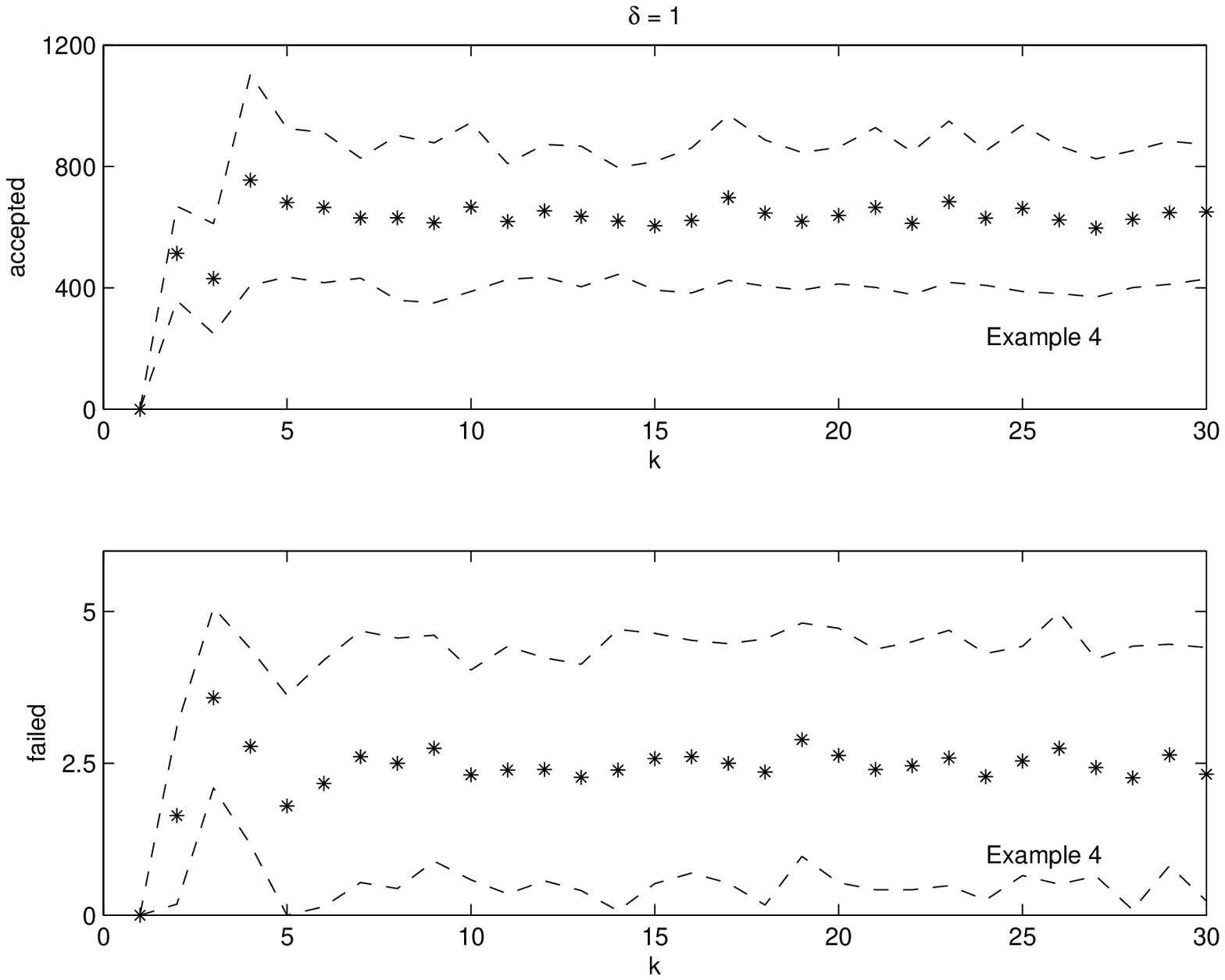} \\
  \includegraphics[width=5in]{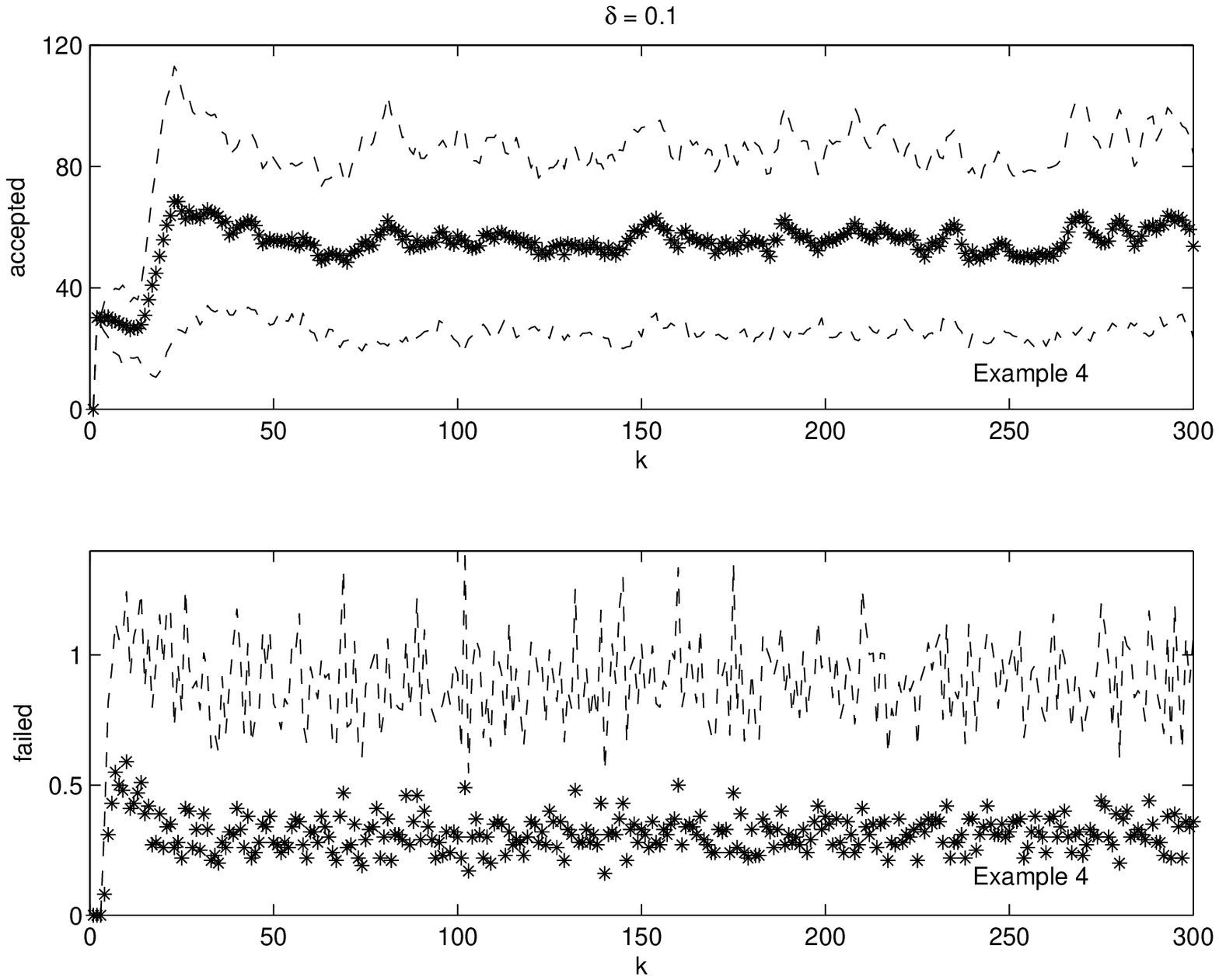}
\end{array}$
\caption{Average (*) and 90\% confidence limits (-) of accepted and
failed steps of the adaptive innovation estimator at each $t_{k}\in
\{t\}_{N}$ in the Example 4.}
\end{figure}

\end{document}